\documentclass[hidelinks,onefignum,onetabnum]{siamart250211}



\usepackage{lipsum}
\usepackage{amsfonts}
\usepackage{graphicx}
\usepackage{epstopdf}
\usepackage{algorithmic}
\ifpdf
  \DeclareGraphicsExtensions{.eps,.pdf,.png,.jpg}
\else
  \DeclareGraphicsExtensions{.eps}
\fi


\newsiamremark{remark}{Remark}
\newsiamremark{hypothesis}{Hypothesis}
\crefname{hypothesis}{Hypothesis}{Hypotheses}
\newsiamthm{claim}{Claim}
\newsiamremark{fact}{Fact}
\crefname{fact}{Fact}{Facts}

\headers{An Example Article}{D. Doe, P. T. Frank, and J. E. Smith}

\title{An Example Article\thanks{Submitted to the editors DATE.
\funding{This work was funded by the Fog Research Institute under contract no.~FRI-454.}}}

\author{Dianne Doe\thanks{Imagination Corp., Chicago, IL 
  (\email{ddoe@imag.com}, \url{http://www.imag.com/\string~ddoe/}).}
\and Paul T. Frank\thanks{Department of Applied Mathematics, Fictional University, Boise, ID 
  (\email{ptfrank@fictional.edu}, \email{jesmith@fictional.edu}).}
\and Jane E. Smith\footnotemark[3]}

\usepackage{amsopn}




\externaldocument[][nocite]{ex_supplement}

\usepackage{adjustbox}
\usepackage{sidecap}
\usepackage{subcaption}
\usepackage[utf8]{inputenc} 
\usepackage[T1]{fontenc}    
\usepackage{booktabs}       
\usepackage{amsfonts}       
\usepackage{nicefrac}       
\usepackage{microtype}      
\usepackage{xcolor}         

\usepackage{framed}
\usepackage{MnSymbol}
\usepackage{comment}
\usepackage{xargs}                      
\usepackage{graphicx}
\usepackage{pgf, tikz}
\usepackage{pgfplots}
\usepackage{diagbox}

\newtheorem{assumption}{Assumption}

\expandafter\def\expandafter\normalsize\expandafter{%
    \normalsize%
    \setlength\abovedisplayskip{2pt}%
    \setlength\belowdisplayskip{2pt}%
    \setlength\abovedisplayshortskip{0pt}%
    \setlength\belowdisplayshortskip{-10pt}%
}
\newcommand{\Value}[2]{V_{#1}^{#2}} 
\newcommand{\Q}[2]{Q_{#1}^{#2}} 
\newcommand{\Qsp}[2]{G_{#1}^{#2}} 
\newcommand{\A}[2]{A_{#1}^{#2}}
\newcommand{\freq}[2]{\mu_{#1}^{#2}} 
\newcommand{\freqS}[2]{d_{#1}^{#2}} 

\newcommand{\ValRob}[1]{V_{#1}^\star} 

\newcommand{\mb}{\mathbb}
\newcommand{\mc}{\mathcal}

\DeclareMathOperator*{\argmax}{argmax}
\DeclareMathOperator*{\argmin}{argmin}

\usepackage[british]{babel}
\usepackage{hyperref}  
\usepackage{enumerate}
\usepackage{enumitem}

\usepackage{algorithm}
\usepackage{algorithmic}

\newcommand{\pistar}{\pi^\star}
\newcommand{\pik}{\pi^{(k)}}
\newcommand{\pikp}{\pi^{(k+1)}}

\newcommand{\Pt}{\xi^{(m)}}
\newcommand{\Pk}{P^{(k)}}
\newcommand{\Ptp}{\xi^{(m+1)}}

\allowdisplaybreaks

\headers{Policy Gradient Algorithms for Robust MDPs}{M. Li, D. Kuhn, and T. Sutter}

\title{Policy Gradient Algorithms for Robust MDPs with Non-Rectangular Uncertainty Sets}
\author{Mengmeng Li\thanks{Risk Analytics and Optimization Chair, EPFL, 1015 Lausanne, Switzerland 
  (\email{mengmeng.li@epfl.ch}, \email{daniel.kuhn@epfl.ch}).}
\and Daniel Kuhn\footnotemark[1]
\and Tobias Sutter\thanks{Department of Economics, University of St.Gallen, 9000 St.\ Gallen, Switzerland 
  (\email{tobias.sutter@unisg.ch}).}
}
\begin{document}

\maketitle

\begin{abstract}
We propose policy gradient algorithms for robust infinite-horizon Markov decision processes (MDPs) with non-rectangular uncertainty sets, thereby addressing an open challenge in the robust MDP literature. Indeed, uncertainty sets that display statistical optimality properties and make optimal use of limited data often fail to be rectangular. Unfortunately, the corresponding robust MDPs cannot be solved with dynamic programming techniques and are in fact provably intractable. We first present a randomized projected Langevin dynamics algorithm that solves the robust policy evaluation problem to global optimality but is inefficient. We also propose a deterministic policy gradient method that is efficient but solves the robust policy evaluation problem only approximately, and we prove that the approximation error scales with a new measure of non-rectangularity of the uncertainty set.
Finally, we describe an actor-critic algorithm that finds an $\epsilon$-optimal solution for the robust policy improvement problem in~$\mc O(1/\epsilon^4)$ iterations. We thus present the first complete solution scheme for robust MDPs with non-rectangular uncertainty sets offering global optimality guarantees. Numerical experiments show that our algorithms compare favorably against state-of-the-art methods. 
\end{abstract}

\vspace{-5pt}

\begin{keywords}
Robust Markov decision processes, Policy gradient, Non-rectangular uncertainty~sets
\end{keywords}

\vspace{-5pt}

\begin{MSCcodes}
90C17, 90C26
\end{MSCcodes}
\vspace{-10pt}
\section{Introduction}

Markov decision processes (MDPs) form the backbone of reinforcement learning and dynamic decision-making \cite{ref:BerTsi-96, puterman2005markov,sutton2018reinforcement,ref:book:meyn-22}. Classical MDPs operate in a time-invariant stochastic environment represented by a known constant transition kernel. 
In most applications, however, the transition kernel is only indirectly observable through a state-action trajectory generated under a fixed policy. In addition, it may even change over time. Uncertain and non-stationary transition kernels are routinely encountered, for example, in finance, healthcare or robotics etc.\ \cite{goh2018data,sun2023reinforcement,wang2023offline}. In these applications it is thus expedient to work with \textit{robust} MDPs \cite{ref:Nilim-05,ref:White-94,wiesemann2013robust},
which assume that the unknown true transition kernel falls within a known uncertainty set and aim to identify a policy that exhibits the best performance under the worst-case transition kernel in this uncertainty set. Optimal policies of robust MDPs display a favorable out-of-sample performance when the transition kernel must be estimated from scarce data or changes over time~\cite{mannor2016robust,wang2022convergence}.
Robust MDPs are also popular in machine learning---particularly in inverse reinforcement learning with expert demonstrations or in offline reinforcement learning with time-varying environments~\cite{chae2022robust,viano2021robust,viano2022robust,blanchet2023double}.

The literature on robust MDPs distinguishes rectangular and non-rectangular uncertainty sets. An uncertainty set is called $(s)$-rectangular (or $(s,a)$-rectangular) if it is representable as a Cartesian product of separate uncertainty sets for the transition probabilities associated with the different current states~$s$ (or current state-action pairs $(s,a)$). Otherwise, the uncertainty set is called non-rectangular. Rectangularity is intimately related to computational tractability. Indeed, robust MDPs with rectangular polyhedral uncertainty sets can be solved in polynomial time, whereas robust MDPs with non-rectangular polyhedral uncertainty sets are NP-hard~\cite{wiesemann2013robust}. Most existing papers on robust MDPs focus on rectangular uncertainty sets. However, statistically optimal uncertainty sets often fail to be rectangular. Indeed, classical Cram\'er-Rao bounds imply that non-rectangular ellipsoidal uncertainty sets around the maximum likelihood estimator of the transition kernel constitute---in an asymptotic sense---the smallest possible confidence sets for the ground truth transition kernel (see~\cite[\S~5]{wiesemann2013robust} and Appendix~\ref{example:MLE:detail}). Results from large deviations theory further imply that non-rectangular conditional relative entropy uncertainty sets lead to polices that display an optimal trade-off between in-sample performance and out-of-sample disappointment~\cite{ref:Sutter-19,li2021distributionally}.



Robust MDPs with rectangular uncertainty sets are usually addressed with value iteration, policy iteration, convex reformulation, or policy gradient methods. Value iteration constructs a sequence of increasingly accurate estimates for the value function of the optimal policy by iterating the robust Bellman operator~\cite{iyengar2005robust,ref:Nilim-05,wiesemann2013robust}, whereas
policy iteration computes a sequence of increasingly optimal policies by iteratively computing the value function of the current policy and updating it greedily~\cite{iyengar2005robust,wiesemann2013robust}. The convex reformulation method is reminiscent of the linear programming approach for non-robust MDPs~\cite{hernandez2012discrete}. It uses an exponential change of variables to construct a convex optimization problem whose solution coincides with the fixed point of an entropy-regularized robust Bellman operator~\cite{grand2022convex}. Policy gradient methods, finally, construct a sequence of increasingly optimal policies by locally updating the current policy along the policy gradient of the value function~\cite{wang2022convergence}.
Value iteration methods enjoy linear convergence and are thus theoretically faster than most known policy gradient methods, which are only guaranteed to display sublinear convergence. However, evaluating the robust Bellman operator can be costly, and value iteration methods can be 
slower than policy gradient methods for large state and action spaces~\cite{wang2022convergence}. This observation has spurred significant 
interest in gradient-based methods. A policy gradient method tailored to robust MDPs with specially structured $(s,a)$-rectangular uncertainty sets is described in~\cite{wang2022policy}, while a policy mirror descent algorithm that can handle general $(s,a)$-rectangular uncertainty sets is developed in~\cite{li2022first}. In addition, there exists a projected policy gradient method for robust MDPs with $s$-rectangular uncertainty sets~\cite{wang2022convergence,wang2024policy}.
While this paper was under review, it has been discovered that policy gradient methods for the robust policy evaluation problem can in fact achieve linear convergence~\cite{li2023first}. We emphasize that the convergence guarantees of all reviewed solution methods for robust MDPs critically exploit a robust version of Bellman's optimality principle, which ceases to hold for non-rectangular uncertainty sets~\cite{goyal2022robust}.

To make things worse, the solution methods described above become inefficient or converge to strictly suboptimal solutions of the robust MDP if the uncertainty set fails to be rectangular. For example, value iteration outputs the optimal value function corresponding to the $s$-rectangular hull of the uncertainty set. This function provides only an upper bound on the sought value function if the uncertainty set is non-rectangular~\cite[Proposition~3.6]{wiesemann2013robust}. The corresponding optimal policy is therefore over-conservative and may perform poorly in out-of-sample tests~\cite[\S~6]{wiesemann2013robust}. Policy iteration, on the other hand, is computationally excruciating because the robust policy evaluation subroutine is already NP-hard~\cite[Theorem~1]{wiesemann2013robust}. However, there exists an efficient {\em approximate} policy iteration scheme based on ideas from robust optimization~\cite{wiesemann2013robust}. This scheme characterizes the value function of any given policy as the solution of an adjustable robust optimization problem, which can be solved approximately but efficiently in linear decision rules. However, the decision rule approximation is accurate only for small uncertainty sets. A Frank-Wolfe policy gradient method for robust policy evaluation with a non-rectangular conditional relative entropy uncertainty set is described in~\cite{li2021distributionally}. However, this method is only guaranteed to find a stationary point. A projected policy gradient method for robust MDPs with generic convex uncertainty sets is proposed in~\cite{wang2022convergence}. However, its convergence proof 
assumes access to a robust policy evaluation oracle. Yet no such oracle is provided.
In addition, its convergence proof differs methodologically from ours in that it expresses the subgradients of the worst-case net present cost as convex combinations of value function gradients evaluated at finitely many worst-case transition kernels. In contrast, we approximate the subgradients at non-differentiable points by sequences of gradients at nearby differentiable points. By focusing on a single worst-case kernel rather than relying on convex combinations, our proof is thus arguably more transparent and more directly reveals the essential gradient dominance property.

The main contributions of our paper can be summarized as follows.
\begin{enumerate}
    \item We show that robust policy evaluation problems with non-rectangular uncertainty sets can be solved to global optimality with a projected Langevin dynamics algorithm. Numerical results suggest that if the uncertainty set happens to be rectangular, then this randomized algorithm is competitive with state-of-the-art deterministic first-order methods in terms of runtime.
    \item We present a Frank-Wolfe algorithm that solves robust policy evaluation problems approximately. The approximation error is shown to scale with a new measure of non-rectangularity of the uncertainty set. We prove that the same method solves robust policy evaluation problems with rectangular uncertainty sets to any accuracy $\epsilon>0$ in $\mc O(S^2/\epsilon^2)$ iterations, where~$S$ denotes the number of states. In contrast, the iteration complexity of the state-of-the-art policy gradient method for this problem class developed in~\cite{wang2022convergence} includes an extra factor~$S^3 A$, where $A$ denotes the number of actions.
    \item We present an actor-critic method that solves robust policy improvement problems with non-rectangular uncertainty sets to any accuracy $\epsilon>0$ in~$\mc O(1/\epsilon^4)$ iterations.
    This is the first complete solution scheme for robust MDPs with non-rectangular uncertainty sets offering global optimality guarantees. A similar projected gradient descent algorithm with an approximate robust policy evaluation oracle is described in~\cite{wang2022convergence}. However, the policy evaluation oracle is not made explicit for general non-rectangular uncertainty sets. We also analyze the convergence properties of our actor-critic method when the robust policy evaluation oracle can only be solved up to a fixed accuracy.
    
    
\end{enumerate}

Our theoretical contributions critically rely on celebrated results in approximate dynamic programming and multi-agent reinforcement learning. Specifically, we adapt a policy iteration algorithm for non-robust MDPs described in~\cite{kakade2002approximately} to solve robust policy evaluation problems. In addition, the convergence analysis of our actor-critic algorithm for robust policy improvement exploits a gradient dominance result originally developed for multi-agent reinforcement learning problems with a fixed transition kernel and adapts it to single-agent MDPs with an uncertain transition kernel.

We remark that if the uncertainty set of the transition kernel is non-rectangular, then the corresponding robust MDP fails to be time consistent~\cite{loomes1986disappointment, shapiro2012time,shapiro2016rectangular}. Thus, it satisfies no Bellman-type equation and cannot be addressed with dynamic programming. Even though alternative optimality criteria are discussed in~\cite{bjork2014theory,lesmanareinventing,zhou2023time}, robust MDPs with general non-rectangular ambiguity sets remain unsolved to date.

\paragraph{Notation} We use $\Delta(\mc S)=\{p\in\mb R_+^{|\mc S|}: \sum_{s\in\mc S} p(s)=1\}$ as a shorthand for the probability simplex over a finite set~$\mc S$. Random variables are denoted by capital letters (\textit{e.g.},~$X$) and their realizations by the corresponding lowercase letters (\textit{e.g.,} $x$). For any $i,j\in \mb N$, the Kronecker delta is defined through~$\delta_{ij}=1$ if $i=j,$ and $\delta_{ij}=0$ otherwise. 
We say that a function~$f:\mc X\to \mb R$ is $\ell$-weakly convex for some~$\ell\ge 0$ if $\tilde f(x)=f(x)+\ell\|x\|^2_2/2$ is convex. 
In this case, the subdifferential $\partial f$ of $f$ is defined as $\partial f(x)=\partial \tilde f(x)-\{\ell x\}.$
In addition, we say that~$f$ is $\ell$-smooth for some $\ell\ge 0$ if it is continuously differentiable and if $\|\nabla f(x)-\nabla f(x')\|_2 \le \ell\|x-x'\|_2$ for all $x,x'\in\mc X$. The Frobenious norm of a matrix $M\in\mathbb R^{m\times n}$ is defined as $\|M\|_{\mathbf F}=(\sum_{i=1}^m\sum_{j=1}^n M_{ij}^2)^{1/2}$.

\vspace{-6pt}
\section{Rectangular and Non-rectangular Uncertainty Sets}

Consider an MDP given by a five-tuple $\left(\mathcal{S}, \mathcal{A}, P, c, \rho\right)$ comprising a finite state space $\mathcal{S}=\{1,\ldots,S\}$, a finite action space $\mathcal{A}=\{1,\ldots,A\}$, a transition kernel $P: \mathcal{S} \times \mathcal{A} \rightarrow \Delta(\mathcal{S})$, a cost-per-stage function $c: \mathcal{S} \times \mathcal{A} \rightarrow \mathbb{R}$,
and an initial distribution $\rho\in\Delta(\mathcal{S})$. 
Note that $\left(\mathcal{S}, \mathcal{A}, P, c,  \rho\right)$ describes a controlled discrete-time stochastic system, where the state at time~$t$ and the action applied at time~$t$ are denoted as random variables $S_t$ and $A_t$, respectively. 
If the system is in state $s_t\in\mc S$ at time $t$ and action $a_t\in\mc A$ is applied, then an immediate cost $c(s_t,a_t)$ is incurred, and the system moves to state $s_{t+1}$ at time $t+1$ with probability $P(s_{t+1}|s_t,a_t)$. 
Actions are chosen according to a policy that prescribes a random action at time $t$ depending on the state history up to time $t$ and the action history up to time $t-1$. Throughout the rest of the paper, we restrict attention to stationary policies, which are described by a stochastic kernel $\pi\in\Pi=\Delta(\mc A)^{S}$, that is, $\pi(a_{t}|s_{t})$ denotes the probability of choosing action $a_{t}$ if the current state is~$s_t$. Unless otherwise stated, we assume without loss of generality that $c(s,a)\in[0,1]$ for all $s\in\mc S$ and $a\in\mc A.$

Given a stationary policy $\pi$, there exists a unique probability measure $\mb P^{P}_\pi $ defined on the canonical sample space $\Omega=(\mathcal{S}\times\mathcal{A})^\infty$ equipped with its power set $\sigma$-algebra $\mc F=2^\Omega$
such that $\mb P^{P}_\pi (S_0=s_0)=\rho(s_0)$ for every $s_0\in\mc S$, while 
\begin{subequations}
\label{eq:def:MDP}
\begin{align}
\label{def:Q:pi}
&\mb P^{P}_\pi (S_{t}=s_{t}|S_{t-1} = s_{t-1}, A_{t-1}=a_{t-1},\ldots,S_0=s_0,A_0=a_0) = P(s_{t}|s_{t-1},a_{t-1}) 
\end{align}
and
\begin{align}
\label{def:pi:pi}
&\mb P^{P}_\pi (A_{t}=a_{t}|S_{t}=s_{t},\ldots,S_0=s_0,A_0=a_0) = \pi(a_{t}|s_{t})
\end{align} 
\end{subequations}
for all $s_1,\ldots,s_{t}\in\mc S$, $a_0,\ldots,a_t\in\mc A$, and $t\in\mb N$.
We denote the expectation operator with respect to $\mb P^{P}_\pi $ by $\mb E_\pi^P[\cdot]$. One readily verifies that the stochastic process $\{S_t\}^\infty_{t=0}$ represents a time-homogeneous Markov chain under $\mb P^{P}_\pi$ with transition probabilities $\mb P^{P}_\pi(S_{t+1}=s'|S_t=s)=\sum_{a\in\mc A} P(s'|s,a)\pi(a|s)$. 
Throughout this paper, we assess the desirability of a policy by its expected net present cost with respect to a prescribed discount factor $\gamma\in(0,1)$.
\begin{definition}[Value function]
    The value function $\Value{\pi}{P}\in\mb R^{S}$ corresponding to a transition kernel $P$ and a stationary policy $\pi$ is defined through
    $$\Value{\pi}{P}(s)=\mb E_\pi^P\left[\sum_{t=0}^{\infty} \gamma^t c(S_t, A_t) \mid S_0=s\right].$$
\end{definition}
One can show that $\Value{\pi}{P}(s)$ constitutes a continuous, rational function of~$P$ and~$\pi$~\cite[Appendix A]{puterman2005markov}.
The policy evaluation problem consists in evaluating the value function~$\Value{\pi}{P}(s)$ for a fixed policy~$\pi$ and initial state~$s$, whereas the policy improvement problem seeks a policy that solves $\min_{\pi\in\Pi} \Value{\pi}{P}(s)$.

In this paper, we are interested in robust MDPs. We thus assume that the transition kernel~$P$ is only known to belong to an uncertainty set $\mc P\subseteq \Delta(\mc S)^{S\times A}$, and we assess the desirability of a policy by its worst-case expected net present cost.

\begin{definition}[Worst-case value function]
The worst-case value function $\Value{\pi}{\star}\in\mb R^S$ associated with a given policy $\pi$ and an uncertainty set $\mc P$ is defined through
\begin{equation}\label{expr:DMDP}
    \Value{\pi}{\star}(s)=\max_{P\in\mc P} \Value{\pi}{P}(s).
\end{equation}
\end{definition}
The \textit{robust policy evaluation problem} then consists in evaluating the worst-case value function~$\ValRob{\pi}(s)$ for a fixed policy~$\pi$ and initial state~$s$, and the \textit{robust policy improvement problem} aims to solve
\begin{align}\label{def:policy:learning:objective}
    \min_{\pi\in\Pi}\max_{P\in\mc P}\Value{\pi}{P}(s).
\end{align}
The structure of the uncertainty set $\mc P$ largely determines the difficulty of solving the robust policy evaluation and improvement problems. These problems become relatively easy if the uncertainty set is rectangular.

\begin{definition}[Rectangular uncertainty sets]
    A set $\mc P\subseteq \Delta(\mc S)^{S\times A}$ of transition matrices is called $\ $
    \begin{enumerate}[label = (\roman*), leftmargin=15pt]
        \item \textit{$(s,a)$-rectangular~\cite{iyengar2005robust}}
            if $\mc P=\prod_{(s,a) \in \mc S\times\mathcal{A}} \mathcal{P}_{s,a}$ for some $\mathcal{P}_{s,a} \subseteq \Delta(\mc S)$, $(s,a) \in \mc S\times\mathcal{A}$;
        \item \textit{$s$-rectangular~\cite{le2007robust}} if $\mc P=\prod_{s \in \mathcal{S}} \mathcal{P}_s $ for some $\mathcal{P}_s \subseteq \Delta(\mc S)^{A}, s \in \mathcal{S}$.
    \end{enumerate}
\end{definition}
There is also an alternative notion of rectangularity, known as $r$-rectangularity~\cite{goh2018data}, which models the transition kernel as a linear function of an uncertain factor matrix. We will not study $r$-rectangular uncertainty sets in the remainder. From now on, we 
call
an uncertainty set $\mc P\subseteq \Delta(\mc S)^{S\times A}$ \textit{non-rectangular} if it is neither $(s,a)$-rectangular nor $s$-rectangular (nor $r$-rectangular).    
As the probability simplex and thus also~$\mc P$ has an empty interior, we employ a reparametrization to represent~$\mc P$ as the image of a solid parameter set~$\Xi$.
 Specifically, we assume that there exists an affine function~$P^\xi$ that maps a solid parameter set $\Xi \subseteq \mb R^q$ to $\Delta(\mc S)^{S\times A}$ such that $\mc P=\{P^\xi : \xi\in\Xi\}. $ 
 This reparametrization may lead to a dimensionality reduction as it allows us to account for structural knowledge about the uncertainty set (\textit{e.g.}, it may be known that certain transitions are impossible or that some transitions have the same probabilities).
 This reparametrization will also help us to establish algorithmic guarantees in Section~\ref{sec:PE}. 

If $\mc P$ is rectangular, then the robust policy improvement problem~\eqref{def:policy:learning:objective} can be solved in polynomial time using robust value iteration.

\begin{theorem}[Complexity of robust policy improvement with $s$-rectangular uncertainty sets~{\cite[Corollary 3]{wiesemann2013robust}}]
    \label{thm:rectangular:DP}
    If the parameter set $\Xi\subseteq\mb R^q$  is representable through~$J$ linear and convex quadratic constraints, and if~$\Xi$ induces an $s$-rectangular uncertainty set $\mc P$, then an $\epsilon$-optimal solution to the robust policy improvement problem~\eqref{def:policy:learning:objective} can be computed in polynomial time $\mc O((q+A+J)^{1 / 2}(q J+A)^3 S \log ^2 (1/\epsilon)+q A S^2 \log (1/\epsilon))$.
\end{theorem}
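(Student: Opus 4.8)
The plan is to establish the bound in two parts: a geometric convergence rate for robust value iteration, which is available because $\mc P$ is $s$-rectangular, and a per-iteration cost obtained by recasting each robust Bellman backup as a small convex conic program.

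First, I would use $s$-rectangularity to reduce problem~\eqref{def:policy:learning:objective} to robust value iteration. Since $\mc P=\prod_{s\in\mc S}\mc P_s$, the optimal worst-case value function $\ValOpt=\min_{\pi\in\Pi}\ValRob{\pi}$ is the unique fixed point of the robust Bellman operator~$\mathcal T$ given by
\begin{equation*}
(\mathcal T V)(s)=\min_{\pi_s\in\Delta(\mc A)}\ \max_{P_s\in\mc P_s}\ \sum_{a\in\mc A}\pi_s(a)\Big(c(s,a)+\gamma\sum_{s'\in\mc S}P(s'\mid s,a)\,V(s')\Big),
\end{equation*}
and $\mathcal T$ is a $\gamma$-contraction with respect to $\|\cdot\|_\infty$. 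Starting from $V_0=0$ and iterating $V_{k+1}=\mathcal T V_k$ therefore yields $\|V_k-\ValOpt\|_\infty\le\gamma^k/(1-\gamma)$, so $\mc O(\log(1/\epsilon)/\log(1/\gamma))$ iterations suffice to reach accuracy $\epsilon/2$; a policy attaining the outer minimum in $(\mathcal T V_k)(s)$ at every state is then $\mc O(\epsilon/(1-\gamma))$-optimal by the standard perturbation argument for value iteration.

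Second, I would bound the cost of one backup $(\mathcal T V)(s)$. Substituting the affine reparametrization $P^\xi$, the objective is affine in $\pi_s$ for fixed $\xi\in\Xi_s$ and affine in $\xi$ for fixed $\pi_s$, and both $\Delta(\mc A)$ and $\Xi_s$ are convex and compact, so Sion's minimax theorem lets me swap $\min$ and $\max$. Carrying out the inner minimization over the simplex replaces $\min_{\pi_s}$ by $\min_{a}$, and introducing an epigraph variable $t$ turns the backup into
\begin{equation*}
\max\ \Big\{\, t \ :\ \xi\in\Xi_s,\quad t\le c(s,a)+\gamma\langle P^\xi(\cdot\mid s,a),V\rangle\ \text{ for all } a\in\mc A \,\Big\}.
\end{equation*}
This is a convex program with $q+1$ variables, $A$ linear epigraph constraints, and the $J$ linear and convex quadratic constraints describing $\Xi_s$. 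After a standard second-order-cone reformulation of the quadratic constraints it becomes an SOCP whose self-concordant-barrier parameter is $\mc O(q+A+J)$ and whose normal-equation dimension is $\mc O(qJ+A)$, so a short-step interior-point method returns a $\delta$-accurate solution in $\mc O(\sqrt{q+A+J}\,\log(1/\delta))$ Newton steps, each costing $\mc O((qJ+A)^3)$ arithmetic operations. Summing over the $S$ states and the $\mc O(\log(1/\epsilon))$ value-iteration steps, and adding the $\mc O(qAS^2\log(1/\epsilon))$ cost of assembling the vectors $P^\xi(\cdot\mid s,a)$ and the inner products $\langle P^\xi(\cdot\mid s,a),V\rangle$ across all state-action pairs and iterations, gives the claimed total.

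The main obstacle I anticipate is the propagation of inexactness: each backup is solved only to tolerance $\delta$, so one must show that inexact robust value iteration still contracts, which requires choosing $\delta\asymp(1-\gamma)^2\epsilon$ so that the accumulated error stays below $\epsilon$ (this is precisely what introduces the extra $\log(1/\epsilon)$ factor, hence the $\log^2(1/\epsilon)$ term), and that the greedy policy extracted from the perturbed final iterate remains $\epsilon$-optimal. A secondary point requiring care is verifying that the conic reformulation of the $J$ linear and convex quadratic constraints genuinely has barrier parameter $\mc O(q+A+J)$ and normal-equation dimension $\mc O(qJ+A)$, since the exact exponents in the interior-point complexity estimate hinge on these quantities.
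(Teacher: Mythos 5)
The paper itself does not prove Theorem~\ref{thm:rectangular:DP}: it is imported verbatim from \cite[Corollary~3]{wiesemann2013robust}, so there is no in-paper proof to compare against. Your sketch reconstructs essentially the argument of that reference: for an $s$-rectangular $\mc P$ the optimal worst-case value function is the fixed point of a $\gamma$-contractive robust Bellman operator, each backup is an SOCP-representable convex program in $(\xi,t)$ solved by an interior-point method, one $\log(1/\epsilon)$ factor comes from the number of value-iteration sweeps and the other from the per-backup solver accuracy, and the $qAS^2\log(1/\epsilon)$ term accounts for assembling the affine maps $\xi\mapsto\langle P^\xi(\cdot\mid s,a),V\rangle$ over all state-action pairs and sweeps. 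Route and accounting therefore match the cited source.

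Two points deserve more care than your sketch gives them. First, the starting point---that for $s$-rectangular sets the min--max value of~\eqref{def:policy:learning:objective} is the unique fixed point of the stated robust Bellman operator---is exactly where rectangularity does its work and is itself a nontrivial theorem of \cite{wiesemann2013robust}; in a self-contained proof it must be established (via a state-wise saddle-point/exchange argument), not asserted. Second, for $s$-rectangular sets the optimal and greedy policies may be \emph{properly randomized}, so after your Sion exchange the epigraph program returns the backup value and a worst-case $\xi$ but not the greedy $\pi_s$: the randomized greedy policy must be recovered from the dual multipliers of the $A$ epigraph constraints (or by solving the min--max before the exchange), and it is that dual solution whose $\mc O(\epsilon/(1-\gamma))$-suboptimality your perturbation argument certifies; replacing $\min_{\pi_s}$ by $\min_a$ is valid for computing the value but not for extracting the policy. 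With these caveats, and granting the barrier-parameter $\mc O(q+A+J)$ and normal-equation dimension $\mc O(qJ+A)$ bookkeeping that you rightly flag as the delicate part, the sketch is a faithful reconstruction of the cited proof rather than a genuinely different route.
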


If the uncertainty set~$\mc P$ fails to be rectangular, on the other hand, then the robust policy evaluation problem is strongly NP-hard even if~$\mc P$ is a convex polyhedron.

\begin{theorem}[Hardness of robust policy evaluation with non-rectangular uncertainty sets~{\cite[Theorem 1]{wiesemann2013robust}}]\label{thm:hardness:robust:PE}
Deciding whether the worst-case value function~\eqref{expr:DMDP} over a non-rectangular polyhedral uncertainty set $\mc P$ exceeds a given value $\alpha$ is strongly NP-hard for any stationary policy $\pi$.
\end{theorem}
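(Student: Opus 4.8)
The plan is to establish strong NP-hardness by a polynomial-time reduction from a known strongly NP-hard problem, and the natural candidate is the \textsc{Max-2-SAT} or, more directly, a quadratic/bilinear optimization problem over the hypercube such as \textsc{Independent Set} or \textsc{3-SAT}. The key structural observation is that, for a \emph{fixed} policy $\pi$, the map $P\mapsto \Value{\pi}{P}(s)$ is a rational function of $P$; however, for a suitably simple MDP (few states, acyclic or nearly acyclic transition structure, small discount-related horizon truncation) it becomes a low-degree \emph{polynomial} in the entries of $P$. In particular, if one builds a layered MDP in which each layer corresponds to a ``decision'' about one Boolean variable, then $\Value{\pi}{P}(s_{\mathrm{init}})$ can be made to equal (up to affine rescaling) a multilinear function $\sum_j w_j \prod_{i\in C_j} x_i$ of the transition parameters $x_i\in[0,1]$, one parameter per variable. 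The uncertainty set $\mc P$ is then the polytope carved out by the linear constraints that (i) keep each row in $\Delta(\mc S)$ and (ii) encode whatever linear coupling across state-action pairs is needed; crucially these coupling constraints are what make $\mc P$ non-rectangular, and they are the whole point of the hardness.

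Concretely, I would follow the template of \cite{wiesemann2013robust} but can sketch the steps: First, pick a source problem whose strong NP-hardness is known and whose objective is a linear (or bilinear) function maximized over $\{0,1\}^n$ subject to linear constraints --- for instance deciding whether a graph has an independent set of size $\ge k$, which is equivalent to maximizing $\mathbf{1}^\top x$ over $x\in[0,1]^n$ with $x_i+x_j\le 1$ for each edge and with an integrality-forcing gadget. Second, construct an MDP with a constant number of stages (or exploit the discount factor to make the tail negligible, choosing $\gamma$ and the cost magnitudes so that only finitely many stages matter up to the target precision) such that, under the fixed policy $\pi$ (which can be taken deterministic, e.g.\ ``always play action $1$''), the value function at the initial state is an affine image of $\mathbf{1}^\top x$ where $x$ ranges over the feasible transition parameters. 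Third, realize the edge constraints $x_i+x_j\le 1$ as linear constraints on $\mc P$ that link transition probabilities belonging to \emph{different} states --- this is exactly where non-rectangularity enters, since a rectangular set could never impose such cross-state coupling. Fourth, verify that $\mc P$ is a polyhedron (all constraints are linear), that it is non-empty and non-rectangular, and that the reduction is polynomial in the size of the source instance. Finally, conclude that $\max_{P\in\mc P}\Value{\pi}{P}(s)\ge \alpha$ if and only if the source instance is a YES-instance, which yields strong NP-hardness of the threshold decision problem; strong (rather than weak) NP-hardness follows because all numbers produced in the reduction are of polynomial magnitude.

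The main obstacle I anticipate is \textbf{controlling the discount / horizon so that $\Value{\pi}{P}(s)$ is exactly (or with controllable error) the desired polynomial}, rather than an infinite rational series in the $x_i$. One clean route is to make the relevant part of the state graph a directed acyclic graph (a ``tree'' of depth $n$) leading to a pair of absorbing states ``accept''/``reject'' with costs $0$ and $1$; then the value at the root is literally a multilinear polynomial in the branching probabilities, with no infinite tail, and the discount factor can be absorbed into the construction or even set aside by noting it only rescales. A secondary technical point is the integrality gadget: one must ensure that the optimal $P\in\mc P$ effectively picks $x\in\{0,1\}^n$, which can be arranged by adding, for each variable, a linear term in the objective that rewards extreme values, or by a standard ``doubling'' gadget as in \cite{wiesemann2013robust}; checking that this gadget does not accidentally restore rectangularity or blow up the numbers is the part that requires care. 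Once these pieces are in place, the equivalence of the optimal robust value with the combinatorial optimum, and hence the NP-hardness of the decision version, is immediate.
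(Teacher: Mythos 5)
First, note that the paper does not prove this statement at all---it is imported verbatim from \cite[Theorem~1]{wiesemann2013robust}---so the comparison is against the reduction in that reference. Your plan (reduce from a strongly NP-hard combinatorial problem, build a layered/absorbing MDP whose value at the initial state is a low-degree polynomial in the uncertain transition parameters, and let linear cross-state coupling constraints supply the non-rectangularity) is the right template, but it has a genuine gap exactly at the step you defer: the integrality-forcing mechanism. With your Independent Set encoding, $\max \mathbf{1}^\top x$ over $x\in[0,1]^n$ with $x_i+x_j\le 1$ is the \emph{fractional} independence number (always $\ge n/2$, e.g.\ $x\equiv 1/2$), so without forcing $x\in\{0,1\}^n$ the equivalence ``robust value $\ge\alpha$ iff YES-instance'' simply fails. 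The fix you offer---adding a linear term that ``rewards extreme values''---cannot work: a linear objective is maximized at vertices of the feasible polytope, and the vertices of the fractional independent-set polytope are half-integral, so no linear term can separate integral from non-integral vertices. The device that actually does the work in \cite{wiesemann2013robust} is to route the chain through the \emph{same} uncertain parameter at two different states (possible only because the non-rectangular, affinely parametrized set can equate entries of $P$ across states), so that path probabilities contribute quadratic terms of the form $\xi_i^2+(1-\xi_i)^2$; maximizing these drives the adversary to $\xi_i\in\{0,1\}$, and the linear side constraints then certify feasibility of the combinatorial instance. Your sketch treats the value as a multilinear polynomial in \emph{distinct} branching probabilities, which can never produce such squared terms, so the crux of the reduction is missing rather than merely unverified.

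A secondary point: the theorem asserts hardness \emph{for any stationary policy} $\pi$, whereas your construction fixes one convenient policy (``always play action~1''), which only yields hardness for some policy. The standard remedy is to make the policy irrelevant---e.g.\ give every state a single action, or identical transitions and costs across actions---so that $\Value{\pi}{P}(s)$ is the same for all $\pi\in\Pi$ and the reduction applies uniformly. With the quadratic gadget above and this modification, the rest of your outline (polyhedral $\mc P$, polynomially bounded numbers, hence \emph{strong} NP-hardness) goes through as in the cited proof.
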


Theorem~\ref{thm:hardness:robust:PE} implies that, unless P=NP, there exists no algorithm for computing an $\epsilon$-optimal solution of the robust policy evaluation problem~\eqref{expr:DMDP} with a non-rectangular uncertainty set in time polynomial in the input size and~$\log(1/\epsilon).$ Thus, the best we can realistically hope for is to develop methods that have a runtime polynomial in~$1/\epsilon$.

\vspace{-5pt}
\section{Robust Policy Evaluation} \label{sec:PE}

Throughout this section we fix a policy $\pi\in\Pi$ and a convex and compact parameter set $\Xi$ that induces an uncertainty set $\mc P=\{P^\xi:\xi\in\Xi\}$. Our aim is to solve the robust policy evaluation problem~\eqref{expr:DMDP} to global optimality. 
The following definitions are needed throughout the paper.

\begin{definition}[Action-value function] \label{def:Q-function}
The action-value function $\Q{\pi}{P}\in\mb R^{S\times A}$ corresponding to a transition kernel $P$ and a stationary policy $\pi$ is defined through 
$$\Q{\pi}{P}(s,a)=\mb E^{P}_\pi\left[\sum_{t=0}^{\infty} \gamma^t c(S_t, A_t) | S_0=s,A_0=a\right].$$ 
\vspace{-10pt}
\end{definition}

\begin{definition}[Action-next-state value function]
The action-next-state value function $\Qsp{\pi}{P}\in\mb R^{S\times A\times S}$ corresponding to a transition kernel $P$ and a stationary policy~$\pi$ is defined through 
$$\Qsp{\pi}{P}(s, a,s')=\mb E_\pi^P\left[\sum_{t=0}^{\infty} \gamma^t c(S_t, A_t) | S_0=s, A_0=a,S_1=s'\right].$$ 
\end{definition}


\begin{definition}[Discounted state visitation distribution]\label{def:visitation:distribution}
   \hfill \begin{enumerate}[label = (\roman*), leftmargin=15pt]
        \item The discounted state visitation distribution $\freqS{\pi}{P}\in\Delta(\mc S)^{S}$ corresponding to a transition kernel~$P$, a stationary policy $\pi$, and an initial state~$s_0$ is defined through
            $$
                \freqS{\pi}{P}(s|s_0)=(1-\gamma) \sum_{t=0}^{\infty} \gamma^t \mb P_\pi^P\left(S_t=s| S_0=s_0\right). 
            $$
        \item The discounted state-action visitation distribution 
        $\freq{\pi}{P}\in\Delta(\mc S\times \mc A)^{S\times A}$ corresponding to a transition kernel~$P$ and an initial state-action pair~$(s_0,a_0)$ is defined through
        $$
                \freq{\pi}{P}(s,a|s_0,a_0)=(1-\gamma) \sum_{t=0}^{\infty} \gamma^t \mb P_\pi^P\left(S_t=s,A_t=a | S_0=s_0,A_0=a_0\right). 
        $$
    \end{enumerate}
\end{definition}
Lemma~\ref{lem:recursion:PE} in the appendix shows that the value functions $\Value{\pi}{P}, \Q{\pi}{P},$ and $\Qsp{\pi}{P}$ are related through several linear equations, which imply the Bellman equation for~$\Value{\pi}{P}.$ One can use these equations to express~$\Value{\pi}{P}, \Q{\pi}{P}$, and $\Qsp{\pi}{P}$ as explicit rational functions of~$\pi$ and~$P.$ These functions are well-defined on dense subsets of $\mb R^{A\times S}$ and $\mb R^{S\times S\times A}$ and, in particular, on open neighborhoods of the physically meaningful domains $\Delta(\mc A)^S$ and $\Delta(\mc S)^{S\times A}$. In the following, we can thus assume that the functions~$\Value{\pi}{P}, \Q{\pi}{P}$, and $\Qsp{\pi}{P}$ extend to open sets containing~$\Delta(\mc A)^{S}$ and $\Delta(\mc S)^{S\times A}$. This implies in particular that the gradients of these functions with respect to~$\pi$ and~$P$ are well-defined.

\begin{remark}[Analytical formula for $V_\pi^P$]
  \label{rem:V-formula}
  One can show that $\freqS{\pi}{P}(s|s_0)$ is the $(s_0,s)$-th entry of the matrix $(1-\gamma)(I-\gamma P_\pi)^{-1}$, where $P_\pi(s,s')=\sum_{a\in\mc A}\pi(a|s) P(s'|s,a).$ If we set $r_\pi(s)=\sum_{a\in\mc A} \pi(a|s)r(s,a),$ then $\Value{\pi}{P}=(I-\gamma P_\pi)^{-1}r_\pi$ by
\cite[Theorem~6.1.1]{puterman2005markov}.
\end{remark}


A robust MDP can be viewed as a zero-sum game between the decision maker, who selects the policy~$\pi,$ and an adversary, who chooses the transition kernel $P^\xi$. In this view, the parameter $\xi$ encodes the adversary's policy. 
Adopting a similar reasoning as in {\cite[Theorem 1]{sutton1999policy}}, we can thus derive an explicit formula for the gradient of the value function with respect to the adversary's policy parameter $\xi$. 
\begin{lemma}[Adversary's policy gradient] \label{lemma:policy:gradient:OPE}
For any $\xi\in\Xi$ and $s_0\in\mc S$, we have 
$$
\nabla_\xi\Value{\pi}{P^\xi}(s_0)=\frac{1}{1-\gamma}\sum_{s,s'\in\mc S,a,a_0\in\mc A}\pi(a_0|s_0) \freq{\pi}{P^\xi}(s,a|s_0,a_0) \Qsp{\pi}{P^\xi}(s, a,s')\nabla_\xi P^\xi(s'|s,a).$$
\end{lemma}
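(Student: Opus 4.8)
The plan is to mimic the classical policy gradient theorem of Sutton et al.\ \cite{sutton1999policy}, but with the roles reversed: here the adversary's ``policy'' is the parameter $\xi$, which enters the value function only through the transition kernel $P^\xi(s'|s,a)$. First I would fix a policy $\pi$ and an initial state $s_0$, and write down the recursive (Bellman-type) relations among $\Value{\pi}{P^\xi}$, $\Q{\pi}{P^\xi}$, and $\Qsp{\pi}{P^\xi}$ that are guaranteed by Lemma~\ref{lem:recursion:PE} in the appendix. The key identities are $\Value{\pi}{P}(s)=\sum_{a}\pi(a|s)\Q{\pi}{P}(s,a)$, $\Q{\pi}{P}(s,a)=\sum_{s'}P(s'|s,a)\Qsp{\pi}{P}(s,a,s')$, and $\Qsp{\pi}{P}(s,a,s')=c(s,a)+\gamma \Value{\pi}{P}(s')$. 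Since $c(s,a)$ does not depend on $\xi$, differentiating the last identity gives $\nabla_\xi \Qsp{\pi}{P^\xi}(s,a,s')=\gamma\,\nabla_\xi \Value{\pi}{P^\xi}(s')$.

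Next I would differentiate the first two identities with respect to $\xi$ and chain them together. Differentiating $\Value{\pi}{P^\xi}(s)=\sum_{a}\pi(a|s)\sum_{s'}P^\xi(s'|s,a)\Qsp{\pi}{P^\xi}(s,a,s')$ and using the product rule yields a term in which the derivative falls on $P^\xi(s'|s,a)$ (the ``explicit'' term $\sum_{a}\pi(a|s)\sum_{s'}\Qsp{\pi}{P^\xi}(s,a,s')\nabla_\xi P^\xi(s'|s,a)$) plus a term in which the derivative falls on $\Qsp{\pi}{P^\xi}$, which by the previous paragraph equals $\gamma\sum_{a}\pi(a|s)\sum_{s'}P^\xi(s'|s,a)\nabla_\xi \Value{\pi}{P^\xi}(s')=\gamma\sum_{s'}P_\pi^\xi(s,s')\nabla_\xi\Value{\pi}{P^\xi}(s')$. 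This gives the fixed-point relation
\begin{equation*}
\nabla_\xi\Value{\pi}{P^\xi}(s)=b_\pi(s)+\gamma\sum_{s'}P_\pi^\xi(s,s')\,\nabla_\xi\Value{\pi}{P^\xi}(s'),
\end{equation*}
where $b_\pi(s)=\sum_{a}\pi(a|s)\sum_{s'}\Qsp{\pi}{P^\xi}(s,a,s')\nabla_\xi P^\xi(s'|s,a)$ plays the role of the ``immediate reward gradient.'' Unrolling this recursion (equivalently, inverting $I-\gamma P_\pi^\xi$, which is legitimate since $\gamma\in(0,1)$ and $P_\pi^\xi$ is stochastic, cf.\ Remark~\ref{rem:V-formula}) yields $\nabla_\xi\Value{\pi}{P^\xi}(s_0)=\sum_{t=0}^\infty\gamma^t\sum_{s}\mb P_\pi^{P^\xi}(S_t=s\mid S_0=s_0)\,b_\pi(s)$. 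It remains to re-expand $b_\pi(s)$ and recognize the discounted occupancy. Writing $b_\pi(s)=\sum_{a}\pi(a|s)\sum_{s'}\Qsp{\pi}{P^\xi}(s,a,s')\nabla_\xi P^\xi(s'|s,a)$ and absorbing $\sum_{t}\gamma^t\mb P_\pi^{P^\xi}(S_t=s,A_t=a\mid S_0=s_0)$ into the state-action visitation distribution $\freq{\pi}{P^\xi}$ — more precisely, conditioning on the first action $A_0=a_0$ drawn from $\pi(\cdot|s_0)$ and using Definition~\ref{def:visitation:distribution}(ii), which carries the normalizing factor $(1-\gamma)$ — produces exactly the claimed formula with the prefactor $\tfrac{1}{1-\gamma}$ and the sum over $s,s'\in\mc S$, $a,a_0\in\mc A$.

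The main obstacle is bookkeeping rather than conceptual: one must be careful that the value functions, being rational functions of $\xi$, are indeed differentiable on an open neighborhood of $\Xi$ (this is exactly what the paragraph preceding Remark~\ref{rem:V-formula} grants us, so the gradients $\nabla_\xi$ are well-defined), and one must correctly match the conditioning in $\freq{\pi}{P^\xi}(s,a|s_0,a_0)$ — which starts the trajectory at the \emph{state-action} pair $(s_0,a_0)$ — against the trajectory appearing in the unrolled recursion, which starts at the state $s_0$ with $A_0\sim\pi(\cdot|s_0)$. This is reconciled by the factor $\pi(a_0|s_0)$ in front of $\freq{\pi}{P^\xi}(s,a|s_0,a_0)$, together with the shift in the time index that converts the $(1-\gamma)^{-1}$ from the occupancy-measure normalization into the stated prefactor. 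A short induction on the truncated horizon, or a direct appeal to the Neumann series for $(I-\gamma P_\pi^\xi)^{-1}$, makes the interchange of differentiation and the infinite sum rigorous.
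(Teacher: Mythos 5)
Your proposal is correct and follows essentially the same route as the paper: both arguments differentiate the Bellman identities of Lemma~\ref{lem:recursion:PE}, obtain a linear recursion for the derivative, unroll it along the Markov chain, and identify the resulting discounted sum with the visitation distribution of Definition~\ref{def:visitation:distribution} (the reconciliation via $\sum_{a_0}\pi(a_0|s_0)\freq{\pi}{P^\xi}(s,a|s_0,a_0)=\freqS{\pi}{P^\xi}(s|s_0)\pi(a|s)$ is exactly the step the paper also invokes). The only cosmetic difference is that you carry $\nabla_\xi$ through the composition and recurse on $\nabla_\xi \Value{\pi}{P^\xi}(s)$ at the state level (closing the argument with the Neumann series for $(I-\gamma P^\xi_\pi)^{-1}$), whereas the paper first applies the chain rule and then recurses on the partial derivatives $\partial \Q{\pi}{P}(s_t,a_t)/\partial P(s'|s,a)$ at the state-action level; also note that no time-index shift is actually needed in your last step, only the conditioning on $A_0$.
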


\begin{proof}[Proof of Lemma~\ref{lemma:policy:gradient:OPE}]
By Lemma~\ref{lem:recursion:PE}\ref{item:valueinQ} and the chain rule we have
\begin{align}\label{expr:chain:rule}
    \nabla_\xi\Value{\pi}{P^\xi}(s_0)=\sum_{s,s'\in\mc S,a,a_0\in\mc A}\pi(a_0|s_0) \left.\frac{\partial \Q{\pi}{P}(s_0,a_0)}{\partial P(s'|s,a)}\right|_{P=P^\xi}\nabla_\xi P^\xi(s'|s,a).
\end{align}
Thus, it remains to find an explicit formula for the derivative of the action-value function~$\Q{\pi}{P}$ with respect to the transition kernel~$P$. A direct calculation reveals that
\begin{align}
  \nonumber  &\frac{\partial \Q{\pi}{P}(s_0,a_0)}{\partial P(s'|s,a)} 
 = \frac{\partial}{\partial P(s'|s,a)} \sum_{s_1\in\mc S} P(s_1|s_0,a_0) \Qsp{\pi}{P}(s_0,a_0,s_1) \\
  \nonumber& =\sum_{s_1\in\mc S}\left[\frac{\partial P(s_1|s_0,a_0)}{\partial P(s'|s,a)}  \Qsp{\pi}{P}(s_0,a_0,s_1)+  P(s_1|s_0,a_0) \frac{\partial \Qsp{\pi}{P}(s_0,a_0,s_1)}{\partial P(s'|s,a)} \right] \\
  \nonumber& =  \delta_{ss_0}\delta_{aa_0}\Qsp{\pi}{P}(s,a,s')+\sum_{s_1\in\mc S} P(s_1|s_0,a_0)
\\&\qquad \frac{\partial }{\partial P(s'|s,a)} \nonumber
    \left[ c(s_0,a_0)+\gamma\sum_{a_1\in\mc A} \pi(a_1|s_1)\Q{\pi}{P}(s_1,a_1)\right]
    \\
    \nonumber& = \delta_{ss_0}\delta_{aa_0}\Qsp{\pi}{P}(s,a,s')+\gamma\sum_{s_1\in\mc S,a_1\in\mc A}  P(s_1|s_0,a_0)\pi(a_1|s_1)\frac{\partial \Q{\pi}{P}(s_1,a_1)}{\partial P(s'|s,a)} \\
\nonumber& = \delta_{ss_0}\delta_{aa_0}\Qsp{\pi}{P}(s,a,s')
\\\label{eq:policygradient:unroll0}&\qquad+\gamma\sum_{s_1\in\mc S,a_1\in\mc A} \mb P^P_\pi(S_1=s_1,A_1=a_1|S_0=s_0,A_0=a_0)\frac{\partial \Q{\pi}{P}(s_1,a_1)}{\partial P(s'|s,a)},
\end{align}
where the first and third equalities use Lemmas~\ref{lem:recursion:PE}\ref{item:QinG} and \ref{lem:recursion:PE}\ref{item:GinQ}, respectively. The last equality follows from the defining properties~\eqref{def:Q:pi} and~\eqref{def:pi:pi} of $\mb P^P_\pi$.  
Repeating the above reasoning for the state-action pair $(s_t,a_t)$ instead of $(s_0,a_0)$ yields
\begin{align*}
    &\frac{\partial \Q{\pi}{P}(s_t,a_t)}{\partial P(s'|s,a)} =\delta_{ss_t}\delta_{aa_t} \Qsp{\pi}{P}(s,a,s')
    \\&\qquad\qquad+\gamma\!\!\! \sum_{s_{t+1}\in\mc S,a_{t+1}\in\mc A}\!\!\! \mb P^P_\pi(S_{t+1}=s_{t+1},A_{t+1}=a_{t+1}|S_t=s_t,A_t=a_t)\frac{\partial \Q{\pi}{P}(s_{t+1},a_{t+1})}{\partial P(s'|s,a)} .  
\end{align*}
Substituting the above expression for $t=1$ into~\eqref{eq:policygradient:unroll0} and recalling that $\{(S_t,A_t)\}^{\infty}_{t=0}$ constitutes a Markov chain under $\mb P^P_\pi$ yields
\begin{align*}
    \frac{\partial \Q{\pi}{P}(s_0,a_0)}{\partial P(s'|s,a)}
    &= \delta_{ss_0}\delta_{aa_0}\Qsp{\pi}{P}(s,a,s')+\gamma \mb P^P_\pi(S_1=s,A_1=a|S_0=s_0,A_0=a_0) \Qsp{\pi}{P}(s,a,s')\\&\qquad\qquad +\gamma^2\sum_{s_2\in\mc S,a_2\in\mc A} \mb P^P_\pi(S_2=s_2,A_2=a_2|S_0=s_0,A_0=a_0)\frac{\partial \Q{\pi}{P}(s_2,a_2)}{\partial P(s'|s,a)} 
\\&=\gamma^{0} \mb P^P_\pi\left(S_0=s,A_0=a \mid S_0=s_0,A_0=a_0\right) \Qsp{\pi}{P}(s,a,s')
\\&\qquad +\gamma^{1} \mb P^P_\pi\left(S_1=s,A_1=a \mid S_0=s_0,A_0=a_0\right) \Qsp{\pi}{P}(s,a,s')
\\&\qquad +\gamma^2\sum_{s_2\in\mc S,a_2\in\mc A} \mb P^P_\pi(S_2=s_2,A_2=a_2|S_0=s_0,A_0=a_0)\frac{\partial \Q{\pi}{P}(s_2,a_2)}{\partial P(s'|s,a)}.
\end{align*}
Iteratively reformulating $\partial \Q{\pi}{P}(s_t,a_t)/ \partial P(s'|s,a)$ for $t=2,3,\ldots$, we finally obtain
\begin{align*}
\frac{\partial \Q{\pi}{P}(s_0,a_0)}{\partial P(s'|s,a)}& =\sum_{t=0}^{\infty} \gamma^{t} \mb P^P_\pi\left(S_t=s,A_t=a \mid S_0=s_0,A_0=a_0\right) \Qsp{\pi}{P}(s,a,s')
\\&=\frac{1}{1-\gamma} \freq{\pi}{P}(s,a|s_0,a_0) \Qsp{\pi}{P}(s, a,s'),
\end{align*}
where the last equality exploits the definition of the discounted state visitation distribution.  
The claim then follows by substituting the above expression into~\eqref{expr:chain:rule}.
\end{proof}
Lemma~\ref{lemma:policy:gradient:OPE} is a key ingredient for two complementary algorithms for solving the robust policy evaluation problem~\eqref{expr:DMDP} with a non-rectangular uncertainty set. Section~\ref{ssect:PLD} first develops a Markov chain Monte Carlo method for solving~\eqref{expr:DMDP} exactly. Next, Section~\ref{alg:CPI} develops a more efficient Frank-Wolfe method for solving~\eqref{expr:DMDP} approximately. Throughout the two sections we fix an initial state $s_0\in\mc S$.

\vspace{-5pt}
\subsection{Projected Langevin Dynamics}
\label{ssect:PLD}
We now develop a Markov Chain Monte Carlo method to solve the robust policy evaluation problem~\eqref{expr:DMDP} to global optimality and derive its convergence rate in expectation.
To this end, we assume throughout this section that $\Xi\subseteq\mb R^q$ is a compact convex body, and we consider the problem of sampling from the Gibbs distribution
$$
\nu_\beta(\mathrm{d}\xi) =\frac{\displaystyle\exp \big(\beta \Value{\pi}{P^\xi}(s_0)\big)}{\displaystyle\int_\Xi \exp\big(\beta \Value{\pi}{P^{\xi'}}(s_0)\big) \mathrm{d} \xi'}\mathrm{d}\xi,
$$
where $\beta>1$ represents the inverse temperature.
Note that the denominator is finite because~$\Xi$ is compact and $\Value{\pi}{P^{\xi}}(s_0)$ is continuous in $\xi$. Indeed, $\Value{\pi}{P}(s_0)$ is continuous in~$P$~\cite[Appendix A]{puterman2005markov}, and~$P^{\xi}$ is affine in~$\xi.$
Sampling from the Gibbs distribution $\nu_\beta$ is of interest because the robust policy evaluation problem~\eqref{expr:DMDP} is equivalent to 
\begin{align}\label{PLD:obj}
    \Value{\pi}{\star}(s_0)=\max_{\xi\in\Xi}V_\pi^{P^\xi}(s_0),
\end{align}
and because~$\nu_\beta$ converges weakly to the uniform distribution on the set of global maximizers of~\eqref{PLD:obj}
as $\beta$ tends to infinity~\cite[Section 2]{hwang1980laplace}. We use the discrete-time counterpart of the Langevin diffusion~\cite{roberts1996exponential} to generate samples that are (approximately) governed by the Gibbs distribution $\nu_\beta,$ see Algorithm~\ref{alg:PLD}. In each iteration $m\in\mb Z_+$, Algorithm~\ref{alg:PLD} first uses Lemma~\ref{lemma:policy:gradient:OPE} to compute the adversary's policy gradient $\nabla_\xi \Value{\pi}{P^\xi}(s_0)$ at the current iterate $\xi=\xi^{(m)},$ perturbs it by adding Gaussian noise, and then applies a projected gradient step to find the next iterate~$\xi^{(m+1)}$. After~$M$ iterations, Algorithm~\ref{alg:PLD} outputs a random iterate $\xi^{(M)}$ whose distribution $\nu_M$ approximates $\nu_\beta$ in the $1$-Wasserstein distance~\cite[Theorem 1]{lamperski2021projected}.
\begin{algorithm}[ht!]  
\caption{Projected Langevin dynamics for solving the robust policy evaluation problem~\eqref{expr:DMDP}}
  \label{alg:PLD}
  \begin{algorithmic}[1]
  \REQUIRE Initial iterate $\xi^{(0)}\in\Xi$, Gibbs parameter $\beta>1$, stepsize $\eta >0$, \ iteration number $M\in\mb N$
  \FOR{$m=0,\ldots, M-1$}
  \STATE Sample $w_{m+1}\sim \mathcal{N}\left(0, I_q\right)$ 
    \STATE Find $\Ptp=\mathrm{Proj}_{\Xi}\left(\Pt+\eta \nabla_{\xi}\left. \Value{\pi}{P^{\xi}}(s_0)\right|_{\xi=\Pt}+\sqrt{2 \eta/\beta} w_{m+1}\right)$ \label{step:pld}
  \ENDFOR
  \end{algorithmic}
  \end{algorithm}

\begin{theorem}[Convergence of Algorithm~\ref{alg:PLD}]
\label{thm:convergence:PLD}
    If $\epsilon>0$, $\eta<1/2$, and $\lambda\in (0,1),$ then there exist universal constants $a>4$, $b>1$, and $c_1,c_2,c_3>0$ such that for $\beta \geq c_1^{-1}\left(2 q/(c_1(1-\lambda) \epsilon e)\right)^{1 / \lambda}$ and
$M\ge \max\{4,c_2\exp (c_3q^b)/\epsilon^a\}$, the distribution~$\nu_M$ of the output  $\xi^{(M)} $ of Algorithm~\ref{alg:PLD} satisfies
$\mb E_{\xi\sim\nu_M} [\Value{
\pi}{P^\xi}(s_0)]\ge \ValRob{\pi}(s_0)-\epsilon.$
\end{theorem}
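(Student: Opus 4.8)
The plan is to bound the optimality gap $\ValRob{\pi}(s_0)-\mb E_{\xi\sim\nu_M}[\Value{\pi}{P^\xi}(s_0)]$ by splitting it into a \emph{bias term}
$\ValRob{\pi}(s_0)-\mb E_{\xi\sim\nu_\beta}[\Value{\pi}{P^\xi}(s_0)]$, which quantifies how far the Gibbs distribution $\nu_\beta$ is from being concentrated on the maximizers of~\eqref{PLD:obj}, and a \emph{sampling term} $\mb E_{\xi\sim\nu_\beta}[\Value{\pi}{P^\xi}(s_0)]-\mb E_{\xi\sim\nu_M}[\Value{\pi}{P^\xi}(s_0)]$, which quantifies how far the algorithm's output law $\nu_M$ is from $\nu_\beta$. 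The hypothesis on $\beta$ will drive the first term below $\epsilon/2$ and the hypothesis on $M$ the second. Both bounds rest on one regularity fact: the map $f:\xi\mapsto\Value{\pi}{P^\xi}(s_0)$ is Lipschitz continuous and has a Lipschitz gradient on a neighbourhood of the convex body~$\Xi$. This follows because, by Remark~\ref{rem:V-formula}, $\Value{\pi}{P}$ is a rational function of~$P$ whose denominator $\det(I-\gamma P_\pi)$ is bounded away from zero uniformly over the compact set~$\mc P$ (here $\gamma<1$ is used), while $P^\xi$ is affine in~$\xi$; the relevant derivatives are therefore uniformly bounded on~$\Xi$, with constants expressible through $\gamma$, $S$, $A$ and the norm of the affine map~$P^\xi$, and the gradient $\nabla_\xi f$ is given explicitly by Lemma~\ref{lemma:policy:gradient:OPE}.

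\emph{Bias term.} Write $f^\star=\max_{\xi\in\Xi}f(\xi)=\ValRob{\pi}(s_0)$ and $g=f^\star-f\ge 0$, so that $f^\star-\mb E_{\xi\sim\nu_\beta}[f(\xi)]=\big(\int_\Xi g(\xi)\mr e^{-\beta g(\xi)}\mr d\xi\big)\big/\big(\int_\Xi \mr e^{-\beta g(\xi)}\mr d\xi\big)$. For the numerator use $x\mr e^{-\beta x}\le 1/(\beta\mr e)$ for $x\ge 0$; for the denominator restrict the integral to a sublevel set $\{g\le t\}$, which — since $g$ is $L$-Lipschitz with $g(\xi^\star)=0$ and $\Xi$ is a convex body satisfying an interior cone condition at a maximizer $\xi^\star$ — contains a set of volume at least $\kappa(t/L)^q$ for a geometric constant $\kappa$ depending only on~$\Xi$. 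Optimising the resulting bound over~$t$ (the optimiser being $t\asymp q/\beta$) yields $f^\star-\mb E_{\nu_\beta}[f]\le (q\log\beta+C)/\beta$, where $C$ absorbs $\log\kappa$, $\log\mr{Vol}(\Xi)$ and $\log L$. Using $\log\beta\le\beta^{1-\lambda}/((1-\lambda)\mr e)$ for $\lambda\in(0,1)$, the bias is at most $q/((1-\lambda)\mr e\,\beta^\lambda)$ up to the lower-order term; choosing a sufficiently small universal constant $c_1$ makes this quantity $\le\epsilon/2$ as soon as $\beta\ge c_1^{-1}\big(2q/(c_1(1-\lambda)\epsilon\mr e)\big)^{1/\lambda}$.

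\emph{Sampling term.} The potential $\xi\mapsto-\beta\Value{\pi}{P^\xi}(s_0)$ is Lipschitz with Lipschitz gradient on the convex body~$\Xi$, and Algorithm~\ref{alg:PLD} is precisely the projected Langevin recursion with stepsize $\eta<1/2$; hence Theorem~1 of~\cite{lamperski2021projected} applies and bounds the $1$-Wasserstein distance $W_1(\nu_M,\nu_\beta)$ by an explicit function of $M$, $\eta$, $\beta$ and the dimension~$q$ that vanishes as $M\to\infty$. Since $f$ is $L$-Lipschitz, $|\mb E_{\nu_\beta}[f]-\mb E_{\nu_M}[f]|\le L\,W_1(\nu_M,\nu_\beta)$, so it suffices to drive the latter below $\epsilon/2$. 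Inspecting the rate in~\cite{lamperski2021projected} — whose mixing-time factor is exponential in a power of~$q$, because the potential is non-convex and, for large~$\beta$, $\nu_\beta$ is strongly multimodal — shows this holds for $M\ge\max\{4,\,c_2\exp(c_3 q^b)/\epsilon^a\}$ with universal $a>4$, $b>1$, $c_2,c_3>0$; the constant $4$ and the stepsize restriction $\eta<1/2$ are the technical requirements of that theorem. Adding the two bounds gives $\ValRob{\pi}(s_0)-\mb E_{\xi\sim\nu_M}[\Value{\pi}{P^\xi}(s_0)]\le\epsilon$, as claimed.

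\emph{Main obstacle.} The decomposition and the Gibbs-concentration estimate are routine; the delicate part is the bookkeeping of constants. One must (i) extract explicit Lipschitz and smoothness constants for $\xi\mapsto\Value{\pi}{P^\xi}(s_0)$ from the rational-function representation of Remark~\ref{rem:V-formula}, the recursions of Lemma~\ref{lem:recursion:PE}, and the affine parametrisation, and check finiteness on a neighbourhood of~$\Xi$ (this is where $\gamma<1$ and compactness of~$\Xi$ are essential); and (ii) propagate the geometry of~$\Xi$ — its diameter and the interior-cone constant at a maximizer, the latter being needed precisely because the maximizer of~$f$ need not lie in the interior of~$\Xi$ — together with those constants through both the Gibbs-concentration bound and the $W_1$-bound of~\cite{lamperski2021projected}, so that the thresholds on $\beta$ and $M$ assume exactly the stated form. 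Matching the precise shape $c_1^{-1}\big(2q/(c_1(1-\lambda)\epsilon\mr e)\big)^{1/\lambda}$ requires the elementary inequality $\log\beta\le\beta^{1-\lambda}/((1-\lambda)\mr e)$ and a careful choice of~$c_1$ to swallow the lower-order terms.
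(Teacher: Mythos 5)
Your overall plan is sound, but it is worth knowing that the paper's proof is far shorter: it only verifies the two hypotheses of a packaged result---namely that $\xi\mapsto\Value{\pi}{P^\xi}(s_0)$ is $L$-smooth (citing Lemma~4 of~\cite{wang2022convergence} rather than re-deriving constants from the rational-function representation) and that $\Xi$ is a convex body---and then invokes Proposition~3 of~\cite{lamperski2021projected}, which already contains, with the exact thresholds on $\beta$ and $M$ appearing in the theorem statement, the entire bias-plus-sampling analysis you reconstruct. So your proposal is essentially an unpacking of that cited proposition: the decomposition into a Gibbs-concentration (bias) term controlled by $\beta$ and a Wasserstein (sampling) term controlled by $M$, together with the $L$-Lipschitz conversion $|\mb E_{\nu_\beta}[f]-\mb E_{\nu_M}[f]|\le L\,W_1(\nu_M,\nu_\beta)$, is exactly how that result is proved. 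What the paper's route buys is that all of the delicate constant bookkeeping you flag as the ``main obstacle'' is inherited for free; what your route buys is transparency about where the specific shapes of the $\beta$- and $M$-thresholds come from.

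One step of your write-up would not survive scrutiny as written: you assert that Theorem~1 of~\cite{lamperski2021projected} bounds $W_1(\nu_M,\nu_\beta)$ by a quantity that ``vanishes as $M\to\infty$'' for a fixed stepsize $\eta<1/2$. For a constant-stepsize discretization of Langevin dynamics this is not true---the bound contains a non-vanishing discretization-bias term in $\eta$ in addition to the geometrically decaying term in $M\eta$---so driving the sampling term below $\epsilon/2$ for arbitrary $\epsilon$ requires an accounting of the $\eta$-dependent bias (or a stepsize tied to $\epsilon$), which is precisely the part of the argument that the packaged Proposition~3 handles and that your ``inspecting the rate'' step glosses over. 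Since the theorem you are proving defers this accounting to the cited proposition anyway, the cleanest fix is to do what the paper does and cite Proposition~3 directly after verifying smoothness and convexity of $\Xi$, rather than recombining Theorem~1 with your own Gibbs-concentration estimate.
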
   
\begin{proof}
By~\cite[Lemma 4]{wang2022convergence}, there exists a constant $L>0$ such that the objective function~$\Value{\pi}{P^\xi}(s_0) $ of problem~\eqref{PLD:obj} is $L$-smooth in~$\xi$.
In addition,~$\Xi$ is a convex body. The claim thus follows from~\cite[Proposition 3]{lamperski2021projected}.
\end{proof}
Theorem~\ref{thm:convergence:PLD} shows that the number of iterations~$M$ needed by Algorithm~\ref{alg:PLD} to compute an~$\epsilon$-optimal solution for the robust policy evaluation problem~\eqref{expr:DMDP} scales exponentially with the dimension~$q$ of the uncertain parameter~$\xi$ and with the number of desired accuracy digits~$\log(1/\epsilon).$ 
This is consistent with the hardness result of Theorem~\ref{thm:hardness:robust:PE}.
Nonetheless, Algorithm~\ref{alg:PLD} solves the robust policy evaluation problem via a simple gradient-based approach and enjoys global optimality guarantees even if the uncertainty set fails to be rectangular.

\begin{remark}[Implementation of Algorithm~\ref{alg:PLD}]
The following modifications can improve the scalability of Algorithm~\ref{alg:PLD} in practice. First, Algorithm~\ref{alg:PLD} computes an exact policy gradient in every iteration, which can be costly when the state and action spaces are large. Stochastic or approximate policy gradients may be cheaper to evaluate. Fortunately, Theorem~\ref{thm:convergence:PLD} continues to hold when stochastic instead of exact policy gradients are used provided that they are affected by sub-Gaussian noise~\cite[Proposition~3]{lamperski2021projected}. In addition, the projection onto the parameter space $\Xi$ is computed in every iteration, which can be costly. As~$\Xi$ is convex, however, the Euclidean projection subroutine solves a convex program and is thus amenable to efficient general-purpose solvers that scale to high dimensions~\cite{usmanova2021fast}. For specific non-rectangular polyhedral uncertainty sets, Euclidean balls, or $\ell_1$-balls, projections are available in closed form or can be computed highly efficiently with specialized methods~\cite{duchi2008efficient,liu2009efficient}.
\end{remark}

The concentration behavior of the discrete-time counterpart of the Langevin diffusion is generally open despite some recent results for convex objective functions~\cite{altschuler2022concentration}. We leave the study of strong concentration bounds complementing Theorem~\ref{thm:convergence:PLD} for future research. However, 
by applying Markov's inequality, we directly obtain the following probabilistic guarantee. 
\begin{corollary}[Probabilistic suboptimality guarantee]
Under the assumptions of Theorem~\ref{thm:convergence:PLD} we have
$\mb P_{\xi\sim\nu_M}[\Value{
\pi}{P^{\xi}}(s_0)>\Value{\pi}{\star}(s_0)-\epsilon/\delta]\ge 1-\delta$ for all $\delta\in(0,1).$
\end{corollary}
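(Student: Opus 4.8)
The plan is to derive the Corollary directly from Theorem~\ref{thm:convergence:PLD} by a one-line application of Markov's inequality to the nonnegative random variable that measures suboptimality. Concretely, under the hypotheses of Theorem~\ref{thm:convergence:PLD} we fix $\delta\in(0,1)$ and run Algorithm~\ref{alg:PLD} with the accuracy parameter in the theorem set to $\epsilon$ (the same $\epsilon$ that appears in the Corollary), so that the output distribution $\nu_M$ satisfies $\mb E_{\xi\sim\nu_M}[\Value{\pi}{P^\xi}(s_0)]\ge \ValRob{\pi}(s_0)-\epsilon$.

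The key step is to introduce the gap random variable $Y=\ValRob{\pi}(s_0)-\Value{\pi}{P^\xi}(s_0)$ with $\xi\sim\nu_M$. Since $\ValRob{\pi}(s_0)=\max_{\xi'\in\Xi}\Value{\pi}{P^{\xi'}}(s_0)$ by definition~\eqref{PLD:obj}, we have $Y\ge 0$ almost surely, and from Theorem~\ref{thm:convergence:PLD} we have $\mb E[Y]\le\epsilon$. Applying Markov's inequality to $Y$ with threshold $\epsilon/\delta>0$ gives $\mb P[Y\ge\epsilon/\delta]\le \delta\,\mb E[Y]/\epsilon\le\delta$, hence $\mb P[Y<\epsilon/\delta]\ge 1-\delta$. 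Rewriting this event in terms of $\Value{\pi}{P^\xi}(s_0)$ yields exactly $\mb P_{\xi\sim\nu_M}[\Value{\pi}{P^\xi}(s_0)>\ValRob{\pi}(s_0)-\epsilon/\delta]\ge 1-\delta$, which is the claim. (One may state it with a strict or non-strict inequality inside the event; the non-strict version $Y\ge\epsilon/\delta$ is what Markov bounds directly, and the complementary event $Y<\epsilon/\delta$ matches the strict inequality in the Corollary.)

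There is essentially no obstacle here: the only things to verify are that $Y$ is genuinely nonnegative (immediate from the fact that $\ValRob{\pi}(s_0)$ is the maximum over $\Xi$, so every realized value $\Value{\pi}{P^\xi}(s_0)$ is at most $\ValRob{\pi}(s_0)$) and that $Y$ is measurable and integrable (it is a continuous function of $\xi$ on the compact set $\Xi$, hence bounded, and $\nu_M$ is a well-defined distribution on $\Xi$ since it is the law of the $M$-th iterate of Algorithm~\ref{alg:PLD}). The bound $\mb E[Y]\le\epsilon$ is precisely the content of Theorem~\ref{thm:convergence:PLD}, so the Corollary follows. If anything requires a word of care, it is simply noting that the statement should be read with the same $M$ and $\beta$ chosen as in Theorem~\ref{thm:convergence:PLD} for the given $\epsilon$; the parameter $\delta$ only enters the post-hoc Markov argument and does not affect the algorithm's tuning.
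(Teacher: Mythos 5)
Your proposal is correct and is exactly the paper's argument: the paper obtains the corollary by applying Markov's inequality to the nonnegative suboptimality gap, whose expectation is bounded by $\epsilon$ via Theorem~\ref{thm:convergence:PLD}. Your additional remarks on nonnegativity, measurability, and the strict versus non-strict inequality are fine but not needed beyond the one-line Markov step.
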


\vspace{-5pt}

\subsection{Frank-Wolfe Algorithm}
\label{ssec:FW}

The robust policy evaluation problem~\eqref{expr:DMDP} is challenging because the objective function $\Value{\pi}{P}(s_0)$ is non-concave in $P.$ Accordingly, it is not surprising that the runtime of the Markov Chain Monte Carlo method developed in Section~\ref{ssect:PLD} scales exponentially with the dimension~$q$ of $\xi$. In this section we show that a stationary point of~\eqref{expr:DMDP} can be found in time polynomial in~$q$. We will also show that the suboptimality of this stationary point \textit{vis-\`a-vis} the global maximum of~\eqref{expr:DMDP} admits a tight computable estimate that depends on the degree of non-rectangularity of the uncertainty set~$\mc P.$ 
To this end, we first note that problem~\eqref{expr:DMDP} is susceptible to a Frank-Wolfe (FW) algorithm~\cite{frank1956algorithm}, see Algorithm~\ref{alg:CPI}. 
A similar FW method has been proposed to solve the policy improvement problem associated with non-robust MDPs~\cite{bhandari2021linear}. 
This method is often referred to as conservative policy iteration \cite{kakade2002approximately}.
Algorithm~\ref{alg:CPI} can thus be also viewed as a conservative policy iteration method for robust policy evaluation problems with non-rectangular uncertainty sets.

\begin{algorithm}[htb!] 
\caption{FW algorithm for solving the robust policy evaluation problem~\eqref{expr:DMDP}}
    \label{alg:CPI}
  \begin{algorithmic}[1]
  \REQUIRE Initial iterate $P^{(0)}\in\mc P$,
  positive stepsizes $\{\alpha_m\}_{m=0}^{\infty}$, 
  tolerence $\epsilon >0$
  \STATE $m \leftarrow 0$
  \REPEAT
\STATE \label{LMO} Find an $\epsilon$-optimal solution $P_\epsilon$ of problem~\eqref{expr:direction:finding} with $P'=P^{(m)}$
\STATE Update $P^{(m+1)}=(1-\alpha_m)P^{(m)} + \alpha_m P_\epsilon $ \label{line:interpolation}
\STATE $m \leftarrow m+1$
\UNTIL{$\langle \nabla_P \Value{\pi}{P^{(m)}}(s_0),P_\epsilon-P^{(m)}\rangle \le \epsilon$}
\RETURN $\widehat P=P^{(m)}$
\end{algorithmic}
\end{algorithm}

In each iteration~$m\in\mb Z_+$, Algorithm~\ref{alg:CPI} computes an $\epsilon$-optimal solution of the direction-finding subproblem
\begin{align}\label{expr:direction:finding}
    \max_{P\in\mc P} \langle \nabla_P \Value{\pi}{P'}(s_0),P-P'\rangle,
\end{align}
which linearizes the objective function of problem~\eqref{expr:DMDP} around the current iterate~$P'=P^{(m)}$. The next iterate~$P^{(m+1)}$ is constructed as a point on the line segment connecting~$P^{(m)}$ and~$P_\epsilon.$
The algorithm terminates as soon as the (approximate) Frank-Wolfe gap $\langle \nabla_P \Value{\pi}{P^{(m)}}(s_0),P_\epsilon-P^{(m)}\rangle$ drops below the prescribed tolerance~$\epsilon$. 
A more explicit reformulation of the direction-finding subproblem~\eqref{expr:direction:finding} suitable for implementation is provided in Proposition~\ref{prop:deg:nonrect} below. For notational convenience, we define the adversary's advantage function as
\begin{align*}
    \A{\pi}{P} (s,a,s')= \Qsp{\pi}{P}(s,a,s') - \Q{\pi}{P}(s,a).
\end{align*}
It quantifies the extent to which the adversary prefers to set the next state to $s'$ instead of sampling it from~$P(\cdot|s,a)$, assuming that the future dynamics of the states and actions are determined by~$\pi$ and~$P.$

It can be shown that the direction-finding subproblem~\eqref{expr:direction:finding} can be equivalently expressed in terms of the adversary's advantage function.
\begin{proposition}[Direction-finding subproblem]
\label{prop:deg:nonrect}
Problem~\eqref{expr:direction:finding} is equivalent to
    \begin{align}\label{expr:explicit:direction:finding}
         \frac{1}{1-\gamma}\max_{P\in\mc P}\sum_{s\in\mc S,a\in\mc A} \freqS{\pi}{P'}(s|s_0)\pi(a|s)  \sum_{s'\in\mc S} P(s'|s,a) \A{\pi}{P'}(s,a,s') .
    \end{align}
\end{proposition}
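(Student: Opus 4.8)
\emph{Proof strategy.} The plan is to make the gradient $\nabla_P\Value{\pi}{P'}(s_0)$ explicit and then to show that, on the feasible set $\mc P$, the linearized objective of~\eqref{expr:direction:finding} coincides term by term with the objective of~\eqref{expr:explicit:direction:finding}. First I would differentiate the identity $\Value{\pi}{P}(s_0)=\sum_{a_0\in\mc A}\pi(a_0|s_0)\Q{\pi}{P}(s_0,a_0)$ from Lemma~\ref{lem:recursion:PE}\ref{item:valueinQ}. Since $\pi$ is held fixed, the chain rule combined with the formula $\partial\Q{\pi}{P}(s_0,a_0)/\partial P(s'|s,a)=\frac{1}{1-\gamma}\freq{\pi}{P}(s,a|s_0,a_0)\Qsp{\pi}{P}(s,a,s')$ derived in the proof of Lemma~\ref{lemma:policy:gradient:OPE} yields
$$\frac{\partial\Value{\pi}{P}(s_0)}{\partial P(s'|s,a)}=\frac{1}{1-\gamma}\sum_{a_0\in\mc A}\pi(a_0|s_0)\freq{\pi}{P}(s,a|s_0,a_0)\Qsp{\pi}{P}(s,a,s').$$

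The next step is the marginalization identity $\sum_{a_0\in\mc A}\pi(a_0|s_0)\freq{\pi}{P}(s,a|s_0,a_0)=\freqS{\pi}{P}(s|s_0)\pi(a|s)$, which I would obtain by unrolling the definition of $\freq{\pi}{P}$, summing the initial action $A_0$ out against $\pi(\cdot|s_0)$ via~\eqref{def:pi:pi}, and then factoring $\mb P^P_\pi(S_t=s,A_t=a\mid S_0=s_0)=\mb P^P_\pi(S_t=s\mid S_0=s_0)\pi(a|s)$ once more by~\eqref{def:pi:pi}; the result is exactly the definition of $\freqS{\pi}{P}$. This reduces the gradient to $\partial\Value{\pi}{P}(s_0)/\partial P(s'|s,a)=\frac{1}{1-\gamma}\freqS{\pi}{P}(s|s_0)\pi(a|s)\Qsp{\pi}{P}(s,a,s')$. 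Substituting it at $P'=P^{(m)}$ into $\langle\nabla_P\Value{\pi}{P'}(s_0),P-P'\rangle$ and collecting the terms with a common pair $(s,a)$, I would use $\sum_{s'\in\mc S}\Qsp{\pi}{P'}(s,a,s')P'(s'|s,a)=\Q{\pi}{P'}(s,a)$ from Lemma~\ref{lem:recursion:PE}\ref{item:QinG} together with $\sum_{s'\in\mc S}P(s'|s,a)=1$, which holds because $P\in\mc P\subseteq\Delta(\mc S)^{S\times A}$, to rewrite the inner sum over $s'$ as $\sum_{s'\in\mc S}(\Qsp{\pi}{P'}(s,a,s')-\Q{\pi}{P'}(s,a))P(s'|s,a)=\sum_{s'\in\mc S}\A{\pi}{P'}(s,a,s')P(s'|s,a)$. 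This turns the objective of~\eqref{expr:direction:finding} into precisely the expression maximized in~\eqref{expr:explicit:direction:finding}, and since both problems share the same feasible set they are equivalent.

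The calculation is essentially mechanical once the gradient formula is available; the two places that require care are the marginalization identity above, which rests on the Markov structure of $\mb P^P_\pi$ encoded in~\eqref{eq:def:MDP}, and the observation that the normalization $\sum_{s'\in\mc S}P(s'|s,a)=1$ is precisely what lets the $P$-independent contribution $\langle\nabla_P\Value{\pi}{P'}(s_0),-P'\rangle$ be folded into the adversary's advantage function $\A{\pi}{P'}$ without affecting the set of maximizers.
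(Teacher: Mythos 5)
Your proof is correct and follows essentially the same route as the paper: apply the adversary's policy gradient formula of Lemma~\ref{lemma:policy:gradient:OPE} under the trivial parametrization, marginalize out $a_0$ via the law of total probability to obtain $\freqS{\pi}{P'}(s|s_0)\pi(a|s)$, and convert the inner sum to the advantage function. The only difference is cosmetic: you re-derive inline, from Lemma~\ref{lem:recursion:PE}\ref{item:QinG} and the normalization $\sum_{s'\in\mc S}P(s'|s,a)=1$, exactly the identity that the paper delegates to Lemma~\ref{lemma:advantage:Qsp}.
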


\begin{proof}
Under the trivial embedding $P^\xi=\xi$ for $\xi=P\in\Xi=\mc P$, nature's policy gradient $\nabla_\xi P^\xi(s'|s,a)\in\mb R^{S\times S\times A}$ constitutes a tensor containing a~$1$ in position $(s',s,a)$ and zeros elsewhere. Thus, Lemma~\ref{lemma:policy:gradient:OPE} implies that
\begin{align*}
 &\langle \nabla_P \Value{\pi}{P'}(s_0),P-P'\rangle
 \\&= \frac{1}{1-\gamma}\sum_{s,s'\in\mc S,a,a_0\in\mc A}\pi(a_0|s_0) \freq{\pi}{P'}(s,a|s_0,a_0)\Qsp{\pi}{P'}(s, a,s')(P(s'|s,a)-P'(s'|s,a))
 \\&=\frac{1}{1-\gamma}\sum_{s\in\mc S,a\in\mc A} \freqS{\pi}{P'}(s|s_0)\pi(a|s) \sum_{s'\in\mc S} (P(s'|s,a)-P'(s'|s,a)) \Qsp{\pi}{P'}(s, a,s')
 \\&=\frac{1}{1-\gamma}\sum_{s\in\mc S,a\in\mc A} \freqS{\pi}{P'}(s|s_0)\pi(a|s)  \sum_{s'\in\mc S} P(s'|s,a) \A{\pi}{P'}(s,a,s'),
\end{align*}
where the second equality follows from~\eqref{eq:def:MDP} and the law of total probability, which implies that $\!\sum_{a_0\in\mc A}\pi(a_0|s_0)\freq{\pi}{P'}(s,a|s_0,a_0)\!=\!\freqS{\pi}{P'}(s|s_0)\pi(a|s)$. The last equality follows from Lemma~\ref{lemma:advantage:Qsp} in the appendix. Thus,~\eqref{expr:direction:finding} and~\eqref{expr:explicit:direction:finding} are equivalent.
\end{proof}
\allowdisplaybreaks
The following assumption is instrumental for the main results of this section.

\begin{assumption}[Irreducibility]\label{assume:irreducible}
The Markov chain $\{S_t\}_{t=0}^\infty$ induced by the given policy $\pi$ is irreducible for every $P\in\mc P_{\mathsf{s}}$, where $\mc P_{\mathsf{s}}$ denotes the smallest $s$-rectangular uncertainty set that contains~$\mc P$.
\end{assumption}
Assumption~\ref{assume:irreducible} ensures that, for every transition kernel $P\in\mc P_{\mathsf s}$, every state $s'$ can be reached from any other state $s$ within a finite number of transitions. Similar assumptions are frequently adopted in the literature on robust and non-robust MDPs~\cite{wiesemann2013robust,gong2020duality}.

In the following, we define the distribution mismatch coefficient associated with two transition kernels $P,P'\in\mc P_{\mathsf{s}}$ as $\delta_d(P',P)=\max_{s\in\mc S} \freqS{\pi}{P'}(s|s_0)/\freqS{\pi}{P}(s|s_0)$, and we set the universal distribution mismatch coefficient to $\delta_d=\max_{P',P\in\mc P_{\mathsf{s}}} \delta_d(P',P)$.

In addition, we define the degree of non-rectangularity of the uncertainty set~$\mc P$ with respect to an anchor point $P'\in\mc P$ as 
\begin{align*}
    \delta_{\mc P}(P')
    =\max_{P_{\mathsf{s}}\in\mc P_{\mathsf{s}}}\langle \nabla_P \Value{\pi}{P'}(s_0),P_{\mathsf{s}}\rangle -\max_{P\in\mc P} \langle \nabla_P \Value{\pi}{P'}(s_0),P\rangle,
\end{align*}
where $\mc P_{\mathsf{s}}$ denotes again the smallest $s$-rectangular uncertainty set that contains $\mc P$, and we set the absolute degree of non-rectangularity of~$\mc P$ to $\delta_{\mc P}=\max_{P'\in\mc P} \delta_{\mc P}(P')$.
Note that if $\mc P$ is $s$-rectangular, then $\mc P_{\mathsf{s}}=\mc P$, and thus $\delta_{\mc P}(P')$ vanishes for every anchor point $P'\in\mc P,$ implying that $\delta_{\mc P}=0.$ If~$\mc P$ is non-rectangular, however, then $\mc P\subseteq\mc P_{\mathsf{s}},$ and thus $\delta_{\mc P}(P')$ is non-negative for every $P'\in\mc P$. Hence, $\delta_{\mc P}$ is non-negative, too. 

\begin{remark}[Finiteness of $\delta_d$ and $\delta_{\mc P}$]\label{remark:distribution:mismatch}
Assumption~\ref{assume:irreducible} ensures that $\delta_d(P',P)>0$ for all $P,P'\in\mc P.$ As $\freqS{\pi}{P'}(s|s_0)$ and $\freqS{\pi}{P}(s|s_0)$ are respectively continuous in~$P'$ and~$P$ for all $s\in\mc S$, and as~$\mc P_{\mathsf{s}}$ is compact, it is clear that~$\delta_d$ is finite and strictly positive. Similarly, as~$\nabla_{P}\Value{\pi}{P'}(s_0)$ is continuous in~$P'$~\cite[Lemma 4]{wang2022convergence} while~$\mc P$ and~$\mc P_{\mathsf{s}}$ are compact, Berge's maximum theorem~\cite[pp.~115-116]{berge1997topological} ensures that~$\delta_{\mc P}(P')$ is continuous in~$P'.$ Thus,~$\delta_{\mc P}$ is finite and non-negative. Note that both~$\delta_d$ and~$\delta_{\mc P}$ depend only on~$\mc P$, $\pi$ and~$s_0.$
\end{remark}

\begin{remark}[Measures of non-rectangularity]
The degree of non-rectangularity of the uncertainty set~$\mc P$ could also be quantified by the Hausdorff distance between~$\mc P$ and its $s$-rectangular hull~$\mc P_{\mathsf{s}}$. However, this alternative non-rectangularity measure is agnostic of the value function~$V_\pi^P(s)$ to be maximized over~$\mc P$. In contrast, the proposed non-rectangularity measure~$\delta_{\mc P}$ accounts for the geometry of~$V_\pi^P(s)$. For example, $\delta_{\mc P}$ vanishes even if~$\mc P$ fails to be $s$-rectangular provided that~$V_\pi^P(s)$ is constant in~$P$, say. Hence, $\delta_{\mc P}$ is small not only when $\mc P$ closely approximates $\mc P_s$ but also when replacing~$\mc P$ with~$\mc P_s$ does not significantly change the solution of the robust policy evaluation problem~\eqref{expr:DMDP}. Note that $\delta_{\mc P}$ is particularly tailored to Algorithm~\ref{alg:CPI} because it estimates the impact of replacing~$\mc P$ with~$\mc P_s$ on the direction-finding subproblem~\eqref{expr:direction:finding}.
\end{remark}

The following theorem uses Assumption~\ref{assume:irreducible} to show that the FW algorithm offers a global performance guarantee.
\begin{theorem}[Global performance guarantee]\label{thm:CPI:PE}
Suppose that Assumption~\ref{assume:irreducible} holds and that $\epsilon>0.$ For every $m\in\mb Z_+$, define the approximate Frank-Wolfe gap
$$\mb G_m= \langle \nabla_P \Value{\pi}{P^{(m)}}(s_0),P_\epsilon-P^{(m)}\rangle,$$ 
where $P_\epsilon$ denotes the $\epsilon$-optimal solution of problem~\eqref{expr:direction:finding} computed in the $m$-th iteration of Algorithm~\ref{alg:CPI}, and let $\alpha_m = \mb G_m (1-\gamma)^3/(4\gamma^2)$. Then, Algorithm~\ref{alg:CPI} terminates within~$\mc O(1/\epsilon^2)$ iterations, and its output $\widehat P$ satisfies
$\ValRob{\pi}(s_0)-\Value{\pi}{\widehat P}(s_0) \leq \delta_d(2\epsilon+\delta_{\mc P}).$
\end{theorem}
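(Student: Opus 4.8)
\emph{Overall strategy.} The claim has two logically independent halves: an $\mc O(1/\epsilon^2)$ iteration bound, which is a routine Frank--Wolfe complexity estimate, and the suboptimality bound on $\widehat P$, which is the conceptual heart of the result and where I expect the real work to sit. For the first half I would run the standard smoothness-plus-telescoping argument; for the second half the plan is to combine the performance difference lemma, the non-negativity of a carefully chosen one-step adversarial advantage, the distribution mismatch coefficient $\delta_d$, and the definition of the degree of non-rectangularity $\delta_{\mc P}$, bootstrapped off the Frank--Wolfe stopping criterion.

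\emph{Iteration complexity.} The proof of Theorem~\ref{thm:convergence:PLD} already records that $P\mapsto\Value{\pi}{P}(s_0)$ is $L$-smooth on a neighbourhood of $\Delta(\mc S)^{S\times A}$. Applying the descent inequality for $L$-smooth functions to the interpolation step $P^{(m+1)}=(1-\alpha_m)P^{(m)}+\alpha_m P_\epsilon$ and bounding $\|P_\epsilon-P^{(m)}\|$ by the (finite) diameter of $\mc P$ yields
\begin{equation*}
\Value{\pi}{P^{(m+1)}}(s_0)\ \ge\ \Value{\pi}{P^{(m)}}(s_0)+\alpha_m\,\mb G_m-\tfrac12 L\,\alpha_m^2\,\mathrm{diam}(\mc P)^2 .
\end{equation*}
The prescribed stepsize $\alpha_m=\mb G_m(1-\gamma)^3/(4\gamma^2)$ is exactly the maximiser of the right-hand side once the smoothness constant and diameter bound are substituted, and one checks that it lies in $(0,1]$ because $\mb G_m$ is uniformly bounded; the inequality then collapses to a per-iteration increase of at least $\tfrac{(1-\gamma)^3}{8\gamma^2}\mb G_m^2$. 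Telescoping this over the first $T$ iterations and using $0\le\Value{\pi}{P}(s_0)\le\tfrac1{1-\gamma}$ bounds $\sum_{m<T}\mb G_m^2$ by a $\gamma$-dependent constant, so if the algorithm has not stopped by iteration $T$ then every $\mb G_m$ ($m<T$) exceeds $\epsilon$ and $T$ is forced below a $\mc O(1/\epsilon^2)$ threshold; hence the test $\mb G_m\le\epsilon$ is met within $\mc O(1/\epsilon^2)$ iterations.

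\emph{Suboptimality bound.} Write $\widehat P=P^{(m)}$ for the output and $P_\epsilon$ for the associated $\epsilon$-optimal solution of~\eqref{expr:direction:finding}. Termination ($\mb G_m\le\epsilon$) together with $\epsilon$-optimality of $P_\epsilon$ gives $\max_{P\in\mc P}\langle\nabla_P\Value{\pi}{\widehat P}(s_0),P-\widehat P\rangle\le 2\epsilon$. I would then invoke the performance difference lemma for the adversary's kernel (which follows from the Bellman recursions of Lemma~\ref{lem:recursion:PE} in the same manner as the identity in the proof of Proposition~\ref{prop:deg:nonrect}): for any maximiser $P^\star$ of $\Value{\pi}{\cdot}(s_0)$ over $\mc P$,
\begin{equation*}
\ValRob{\pi}(s_0)-\Value{\pi}{\widehat P}(s_0)=\frac1{1-\gamma}\sum_{s}\freqS{\pi}{P^\star}(s|s_0)\sum_{a}\pi(a|s)\sum_{s'}P^\star(s'|s,a)\,\A{\pi}{\widehat P}(s,a,s').
\end{equation*}
From here the plan is: (i) replace the inner, state-wise sum by its maximum $h_{\widehat P}(s)$ over the $s$-slice $(\mc P_{\mathsf s})_s$ of the $s$-rectangular hull $\mc P_{\mathsf s}$, which is legitimate since $P^\star(\cdot\,|\,s,\cdot)\in(\mc P_{\mathsf s})_s$; (ii) observe $h_{\widehat P}(s)\ge 0$, because feeding the $s$-slice of $\widehat P\in\mc P\subseteq\mc P_{\mathsf s}$ into the maximand makes it vanish (Lemma~\ref{lemma:advantage:Qsp}); (iii) because $h_{\widehat P}(s)\ge 0$, apply the distribution mismatch inequality $\freqS{\pi}{P^\star}(s|s_0)\le\delta_d\,\freqS{\pi}{\widehat P}(s|s_0)$ --- whose ratios are well defined by Assumption~\ref{assume:irreducible} --- term by term, to get $\ValRob{\pi}(s_0)-\Value{\pi}{\widehat P}(s_0)\le\tfrac{\delta_d}{1-\gamma}\sum_s\freqS{\pi}{\widehat P}(s|s_0)\,h_{\widehat P}(s)$; (iv) since $\mc P_{\mathsf s}$ is $s$-rectangular, the maximisation defining $h_{\widehat P}$ decouples across states, so by the identity in the proof of Proposition~\ref{prop:deg:nonrect} applied with $\mc P_{\mathsf s}$ in place of $\mc P$ the last sum equals $\max_{P_{\mathsf s}\in\mc P_{\mathsf s}}\langle\nabla_P\Value{\pi}{\widehat P}(s_0),P_{\mathsf s}-\widehat P\rangle$, which by the definition of $\delta_{\mc P}(\widehat P)$ equals $\delta_{\mc P}(\widehat P)+\max_{P\in\mc P}\langle\nabla_P\Value{\pi}{\widehat P}(s_0),P-\widehat P\rangle$. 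Combining this chain with the bound $\max_{P\in\mc P}\langle\nabla_P\Value{\pi}{\widehat P}(s_0),P-\widehat P\rangle\le2\epsilon$ and $\delta_{\mc P}(\widehat P)\le\delta_{\mc P}$ delivers $\ValRob{\pi}(s_0)-\Value{\pi}{\widehat P}(s_0)\le\delta_d(2\epsilon+\delta_{\mc P})$.

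\emph{Main obstacle.} The complexity bound is mechanical. The delicate part is the suboptimality estimate: one must identify the right auxiliary quantity $h_{\widehat P}(s)$ --- the largest one-step adversarial advantage the $s$-rectangular hull permits at state $s$ --- verify its non-negativity so that the change of measure via $\delta_d$ points in the correct direction, and only then recognise the state-decoupled maximum as an inner product with the gradient so that the Frank--Wolfe stopping criterion and the definition of $\delta_{\mc P}$ can be brought in. A secondary subtlety is that the performance difference lemma naturally produces the visitation distribution of the adversary's \emph{optimal} kernel $P^\star$ rather than that of $\widehat P$, which is exactly what forces the distribution mismatch coefficient into the bound.
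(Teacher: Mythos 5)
Your suboptimality bound is correct and follows essentially the paper's own route: terminate with $\max_{P\in\mc P}\langle\nabla_P\Value{\pi}{\widehat P}(s_0),P-\widehat P\rangle\le 2\epsilon$, apply the performance difference lemma (Lemma~\ref{lem:pdl:Q}), relax the per-state inner sum to its maximum over the $s$-slices of $\mc P_{\mathsf s}$, use its non-negativity to change measure via $\delta_d$, exploit $s$-rectangularity to pull the maximum outside the state sum, and identify the result as $\max_{P_{\mathsf s}\in\mc P_{\mathsf s}}\langle\nabla_P\Value{\pi}{\widehat P}(s_0),P_{\mathsf s}-\widehat P\rangle=\delta_{\mc P}(\widehat P)+\max_{P\in\mc P}\langle\nabla_P\Value{\pi}{\widehat P}(s_0),P-\widehat P\rangle$. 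The only cosmetic difference is that you anchor the argument at a maximizer $P^\star$ over $\mc P$ rather than over $\mc P_{\mathsf s}$; both lead to the same chain, and your explicit non-negativity check of the slice-wise maximum is exactly the point the paper needs to make H\"older/the mismatch inequality go in the right direction.

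The gap is in the iteration-complexity half. You invoke generic $L$-smoothness of $P\mapsto\Value{\pi}{P}(s_0)$ and assert that the prescribed stepsize $\alpha_m=\mb G_m(1-\gamma)^3/(4\gamma^2)$ ``is exactly the maximiser'' of $\alpha_m\mb G_m-\tfrac12 L\alpha_m^2\,\mathrm{diam}(\mc P)^2$. That is unsubstantiated and, with the actual constants, false: the smoothness constant borrowed from~\cite[Lemma~4]{wang2022convergence} and the Frobenius diameter of $\mc P$ both carry dimension factors (of order $SA$ and $\sqrt{SA}$ respectively), so $L\,\mathrm{diam}(\mc P)^2$ exceeds $4\gamma^2/(1-\gamma)^3$ by roughly $S^2A$, and at the prescribed stepsize the quadratic penalty can swamp the linear gain, leaving no per-iteration improvement guarantee at all (and hence no termination bound, since without monotone improvement the stopping test $\mb G_m\le\epsilon$ need not be reached in $\mc O(1/\epsilon^2)$ steps). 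The paper avoids this by an MDP-specific, dimension-free ``directional'' descent argument in the spirit of conservative policy iteration: the interpolation step changes each row of the kernel by at most $2\alpha_m$ in $\ell_1$ (equation~\eqref{expr:transition:difference}), hence the discounted visitation distribution shifts by at most $2\alpha_m\gamma/(1-\gamma)$ in $\ell_1$ (Lemma~\ref{lem:PE:distance}); pairing this shift with the advantage function, which is bounded by $\gamma/(1-\gamma)$ in $\ell_\infty$, gives
\begin{align*}
(1-\gamma)\bigl(\Value{\pi}{P^{(m+1)}}(s_0)-\Value{\pi}{P^{(m)}}(s_0)\bigr)\ \ge\ \alpha_m\Bigl(\mb G_m(1-\gamma)-\frac{2\alpha_m\gamma^2}{(1-\gamma)^2}\Bigr),
\end{align*}
and it is this dimension-free quadratic for which $\alpha_m=\mb G_m(1-\gamma)^3/(4\gamma^2)$ is the exact maximizer, yielding the improvement of order $\mb G_m^2(1-\gamma)^4/(8\gamma^2)$ and the $8\gamma^2/(\epsilon^2(1-\gamma)^5)$ termination bound (Lemmas~\ref{lem:PE:policy:improve} and~\ref{lem:local:opt:CPI}). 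This distinction is not a technicality: the absence of the extra $S^3A$-type factor relative to~\cite{wang2022convergence} is precisely what the CPI-style analysis buys, and it is lost (indeed the whole first half is unproven) under your smoothness-plus-diameter argument with the stepsize fixed by the theorem.
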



Theorem~\ref{thm:CPI:PE} implies that if $\mathcal{P}$ is $s$-rectangular, in which case $\delta_{\mc P}=0$, then Algorithm~\ref{alg:CPI} solves the robust policy evaluation problem~\eqref{expr:DMDP} to global optimality. This insight is formalized in the next corollary.

\begin{corollary}[Global optimality guarantee when $\mc P$ is rectangular]
\label{cor:CPI-for-rectangular-sets}
Suppose that all assumptions of Theorem~\ref{thm:CPI:PE} hold and that~$\mc P$ is $s$-rectangular. Then,
the output~$\widehat P$ of Algorithm~\ref{alg:CPI} satisfies
$\ValRob{\pi}(s_0)-\Value{\pi}{\widehat P}(s_0) \leq 2\delta_d\epsilon.$
\end{corollary}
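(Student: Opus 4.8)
The plan is to reduce the statement directly to Theorem~\ref{thm:CPI:PE} by showing that $s$-rectangularity forces the absolute degree of non-rectangularity to vanish. First I would record the elementary observation that if $\mc P$ is $s$-rectangular, then $\mc P$ is itself one of the $s$-rectangular uncertainty sets containing $\mc P$, so the smallest such set is $\mc P_{\mathsf{s}}=\mc P$. Plugging this into the definition
$\delta_{\mc P}(P')=\max_{P_{\mathsf{s}}\in\mc P_{\mathsf{s}}}\langle \nabla_P \Value{\pi}{P'}(s_0),P_{\mathsf{s}}\rangle -\max_{P\in\mc P}\langle \nabla_P \Value{\pi}{P'}(s_0),P\rangle$
makes the two maximization problems identical for every anchor point $P'\in\mc P$, whence $\delta_{\mc P}(P')=0$ for all $P'\in\mc P$ and therefore $\delta_{\mc P}=\max_{P'\in\mc P}\delta_{\mc P}(P')=0$. (This is exactly the remark already made in the text just after the definition of $\delta_{\mc P}$, so the step is essentially bookkeeping.)

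Next I would invoke Theorem~\ref{thm:CPI:PE} verbatim: since all of its hypotheses are assumed, Algorithm~\ref{alg:CPI} with the stepsize $\alpha_m=\mb G_m(1-\gamma)^3/(4\gamma^2)$ terminates within $\mc O(1/\epsilon^2)$ iterations and its output $\widehat P$ satisfies $\ValRob{\pi}(s_0)-\Value{\pi}{\widehat P}(s_0)\le \delta_d(2\epsilon+\delta_{\mc P})$. Substituting $\delta_{\mc P}=0$ from the previous paragraph collapses the right-hand side to $2\delta_d\epsilon$, which is the claimed bound. It is worth noting in passing that $\delta_d$ is finite and strictly positive by Remark~\ref{remark:distribution:mismatch}, which relies only on Assumption~\ref{assume:irreducible}, so the bound is a genuine finite guarantee.

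There is no real obstacle here; the corollary is a one-line specialization of Theorem~\ref{thm:CPI:PE}. The only point that deserves a sentence of care is the identity $\mc P_{\mathsf{s}}=\mc P$, which must be justified from the \emph{minimality} in the definition of $\mc P_{\mathsf{s}}$ (any $s$-rectangular superset of an $s$-rectangular set contains it, and $\mc P$ is such a superset of itself), rather than merely asserted. Everything else is a direct quotation of the already-established theorem.
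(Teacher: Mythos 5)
Your proposal is correct and follows exactly the paper's argument: the paper notes immediately after the definition of $\delta_{\mc P}$ that $s$-rectangularity gives $\mc P_{\mathsf{s}}=\mc P$ and hence $\delta_{\mc P}=0$, and the corollary is then the verbatim specialization of Theorem~\ref{thm:CPI:PE} with $\delta_{\mc P}=0$. Nothing is missing.
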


\begin{remark}[Robust policy evaluation with rectangular uncertainty sets]
A policy gradient method that solves robust policy evaluation problems with $s$-rectangular uncertainty sets to global optimality is proposed in~\cite{wang2022convergence}.
While displaying the same dependence on~$\epsilon$, one can show that the iteration complexity of this alternative method exceeds that of our algorithm by a factor~$S^3 A$.
The theoretical analysis in Section~\ref{ssec:FW} implies that our FW algorithm requires $32\gamma^2\delta_d^2/(\delta^2(1-\gamma)^5)$ iterations to find a~$\delta$-optimal solution for problem~\eqref{expr:DMDP}. Indeed, if we set~$\epsilon=\delta/(2\delta_d)$, then the output~$\widehat P$ of Algorithm~\ref{alg:CPI} satisfies $\ValRob{\pi}(s_0)-\Value{\pi}{\widehat P}(s_0) \leq \delta$ by Corollary~\ref{cor:CPI-for-rectangular-sets}, and the algorithm terminates within~$32\gamma^2\delta_d^2/(\delta^2(1-\gamma)^5)$ iterations by Lemma~\ref{lem:local:opt:CPI}. Similarly, one can show that \cite[Algorithm~2]{wang2022convergence} requires $32\gamma S^3 A \delta_d^2/(\delta^2(1-\gamma)^4)$ iterations to find a~$\delta$-optimal solution for problem~\eqref{expr:DMDP}; see~\cite[Theorem~4.4]{wang2022convergence}. Thus, the iteration complexity of~\cite[Algorithm~2]{wang2022convergence} includes an extra factor~$S^3 A$, which grows polynomially with the numbers of states and actions, but lacks a dimensionless factor~$\gamma/(1-\gamma)$. 

Note that the iteration complexities of both methods scale with the squared distribution mismatch coefficient~$\delta_d^2$. As $s_0$ follows the uniform distribution on~$\mc S$, the discounted state visitation distribution $d^P_\pi(s|s_0)$ must be averaged over~$s_0$. Hence, one can use the trivial bounds $d^P_\pi(s_0|s_0)\geq 1-\gamma$ and $d^P_\pi(s|s_0)\leq 1$ for all $s,s_0\in\mc S$ to show that~$\delta^2_d\leq S^2/(1-\gamma)^2$. The iteration complexities of the FW algorithm and~\cite[Algorithm~2]{wang2022convergence} can thus be expressed as explicit functions of the fundamental parameters~$S$, $A$, $\gamma$ and~$\delta$.
In addition, the method in~\cite{wang2022convergence} requires an exact projection oracle onto the uncertainty set, while our FW algorithm only requires approximate solutions of the direction-finding subproblem~\eqref{expr:direction:finding}. Our projection-free FW algorithm is thus preferable for non-elementary uncertainty sets. Numerical experiments suggest that the policy gradient method developed in~\cite{wang2022convergence} converges faster than dynamic programming methods despite its suboptimal theoretical convergence rate.
\end{remark}

The proof of Theorem~\ref{thm:CPI:PE} relies on a few preparatory results. First, we need the following variant of the celebrated performance difference lemma for non-robust MDPs~\cite[Lemma 6.1]{kakade2002approximately}, which compares the performance of different transition kernels under a fixed policy~$\pi$.

\begin{lemma}[Performance difference across transition kernels] \label{lem:pdl:Q}
    For any $P,P'\in\mc P$, $\pi\in\Pi,$ and $s_0\in\mc S$, we have 
    \begin{align*}
        \Value{\pi}{P}(s_0)-\!\Value{\pi}{P'}(s_0)
        =\! \frac{1}{1-\gamma}\! \sum_{s\in\mc S,a\in\mc A} \!\!\freqS{\pi}{P}(s|s_0)\pi(a|s)  \sum_{s'\in\mc S} \!(P(s'|s,a)-\!\!P'(s'|s,a)) \Qsp{\pi}{P'}(s,a,s').
    \end{align*}
\end{lemma}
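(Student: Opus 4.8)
The plan is to adapt the telescoping argument underlying the classical performance difference lemma, but to telescope along a trajectory generated by $P$ while subtracting off the value function of $P'$. For any realisation $(s_0,a_0,s_1,a_1,\dots)$ of the process $\{(S_t,A_t)\}_{t\ge 0}$, the elementary identity
$$
\sum_{t=0}^{\infty}\gamma^{t}c(S_t,A_t)=\Value{\pi}{P'}(S_0)+\sum_{t=0}^{\infty}\gamma^{t}\bigl(c(S_t,A_t)+\gamma\Value{\pi}{P'}(S_{t+1})-\Value{\pi}{P'}(S_t)\bigr)
$$
holds because the $\Value{\pi}{P'}$-terms telescope and $\gamma^{t}\Value{\pi}{P'}(S_t)\to 0$ (since $c\in[0,1]$ forces $\|\Value{\pi}{P'}\|_\infty\le 1/(1-\gamma)$ and $\gamma\in(0,1)$). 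Taking $\mb E_\pi^{P}[\,\cdot\mid S_0=s_0]$ on both sides and interchanging the expectation with the sum over~$t$---which is legitimate by dominated convergence, the summands being bounded in absolute value by a constant times~$\gamma^{t}$---yields
$$
\Value{\pi}{P}(s_0)-\Value{\pi}{P'}(s_0)=\sum_{t=0}^{\infty}\gamma^{t}\,\mb E_\pi^{P}\Bigl[c(S_t,A_t)+\gamma\Value{\pi}{P'}(S_{t+1})-\Value{\pi}{P'}(S_t)\mid S_0=s_0\Bigr].
$$

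Next I would evaluate the inner expectation by conditioning on $(S_t,A_t)$ and invoking Lemma~\ref{lem:recursion:PE}. Under $\mb P_\pi^{P}$ the Markov property implies that, given $S_t=s$, one draws $A_t\sim\pi(\cdot\mid s)$ and then $S_{t+1}\sim P(\cdot\mid s,A_t)$ independently of the past; combining this with the identity $c(s,a)+\gamma\Value{\pi}{P'}(s')=\Qsp{\pi}{P'}(s,a,s')$ (Lemma~\ref{lem:recursion:PE}\ref{item:GinQ} together with \ref{lem:recursion:PE}\ref{item:valueinQ}) gives
$$
\mb E_\pi^{P}\bigl[c(S_t,A_t)+\gamma\Value{\pi}{P'}(S_{t+1})\mid S_t=s\bigr]=\sum_{a\in\mc A}\pi(a\mid s)\sum_{s'\in\mc S}P(s'\mid s,a)\,\Qsp{\pi}{P'}(s,a,s').
$$
On the other hand, Lemma~\ref{lem:recursion:PE}\ref{item:valueinQ} and \ref{lem:recursion:PE}\ref{item:QinG} give $\Value{\pi}{P'}(s)=\sum_{a\in\mc A}\pi(a\mid s)\Q{\pi}{P'}(s,a)=\sum_{a\in\mc A}\pi(a\mid s)\sum_{s'\in\mc S}P'(s'\mid s,a)\,\Qsp{\pi}{P'}(s,a,s')$. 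Subtracting, the terms weighted by~$P'$ cancel and I am left with
$$
\mb E_\pi^{P}\bigl[c(S_t,A_t)+\gamma\Value{\pi}{P'}(S_{t+1})-\Value{\pi}{P'}(S_t)\mid S_t=s\bigr]=\sum_{a\in\mc A}\pi(a\mid s)\sum_{s'\in\mc S}\bigl(P(s'\mid s,a)-P'(s'\mid s,a)\bigr)\Qsp{\pi}{P'}(s,a,s').
$$

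Finally I would write $\mb E_\pi^{P}[\,\cdot\mid S_0=s_0]=\sum_{s\in\mc S}\mb P_\pi^{P}(S_t=s\mid S_0=s_0)\,(\,\cdot\,)$, exchange the (absolutely convergent) double sum over~$t$ and~$s$, and recognise $\sum_{t=0}^{\infty}\gamma^{t}\mb P_\pi^{P}(S_t=s\mid S_0=s_0)=\frac{1}{1-\gamma}\freqS{\pi}{P}(s\mid s_0)$ from Definition~\ref{def:visitation:distribution}; this produces exactly the claimed identity. I do not expect a genuine obstacle here: the argument is essentially bookkeeping, and the only delicate points---convergence of the telescoping series and the two interchanges of summation and expectation---are all controlled by the uniform bound $\|\Value{\pi}{P'}\|_\infty\le 1/(1-\gamma)$ together with $\gamma\in(0,1)$. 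As a shortcut, one could instead start from the resolvent identity $(I-\gamma P_\pi)^{-1}-(I-\gamma P'_\pi)^{-1}=\gamma(I-\gamma P_\pi)^{-1}(P_\pi-P'_\pi)(I-\gamma P'_\pi)^{-1}$, apply both sides to~$r_\pi$, read off the $(s_0,\cdot)$ row of $(1-\gamma)(I-\gamma P_\pi)^{-1}$ as $\freqS{\pi}{P}(\cdot\mid s_0)$ via Remark~\ref{rem:V-formula}, and convert $\gamma\Value{\pi}{P'}(s')$ into $\Qsp{\pi}{P'}(s,a,s')$ using $\sum_{s'\in\mc S}(P(s'\mid s,a)-P'(s'\mid s,a))=0$; but the telescoping proof is more self-contained.
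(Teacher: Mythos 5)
Your argument is correct. It proves the same identity as the paper but by a somewhat different mechanism: you telescope the Bellman residuals of $\Value{\pi}{P'}$ pathwise along a trajectory generated under $P$, take $\mb E^{P}_\pi[\,\cdot\mid S_0=s_0]$ with a single dominated-convergence interchange, and then use Lemma~\ref{lem:recursion:PE} to identify the per-step term $\sum_{a}\pi(a|s)\sum_{s'}(P(s'|s,a)-P'(s'|s,a))\Qsp{\pi}{P'}(s,a,s')$ and Definition~\ref{def:visitation:distribution} to recognise $\sum_t\gamma^t\mb P^P_\pi(S_t=s|S_0=s_0)=\freqS{\pi}{P}(s|s_0)/(1-\gamma)$. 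The paper instead works one level down, at the action-value functions: it derives a recursion for $\Q{\pi}{P}(s_t,a_t)-\Q{\pi}{P'}(s_t,a_t)$ in which the inhomogeneous term is exactly $(P-P')\,\Qsp{\pi}{P'}$, unrolls this recursion over $t$, identifies the state-action visitation distribution $\freq{\pi}{P}$, and only at the end averages over $a_0$ with weights $\pi(a_0|s_0)$ to pass from $Q$-differences to $V$-differences. The two routes are morally the same telescoping of the $P'$-Bellman equation along $P$-dynamics, but yours is the standard ``value gap equals discounted sum of temporal-difference errors'' formulation: it stays at the level of $V$, needs no explicit iteration, and isolates the convergence issue in one clean dominated-convergence step (justified, as you note, by $\|\Value{\pi}{P'}\|_\infty\le 1/(1-\gamma)$), whereas the paper's version produces the state-action form of the identity (used elsewhere, e.g.\ in the proof of Lemma~\ref{lem:PE:policy:improve}) as an intermediate byproduct. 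Your resolvent-identity shortcut is also valid and matches Remark~\ref{rem:V-formula}; the conversion of $\gamma\Value{\pi}{P'}(s')$ into $\Qsp{\pi}{P'}(s,a,s')$ via $\sum_{s'}(P(s'|s,a)-P'(s'|s,a))=0$ is exactly the content of Lemma~\ref{lemma:advantage:Qsp}-type manipulations, so nothing is missing there either.
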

\begin{proof}[Proof of Lemma~\ref{lem:pdl:Q}]
    For any $t\in\mb Z_+$ we have
\begin{align*}
   \nonumber & \Q{\pi}{P}(s_t,a_t)-\Q{\pi}{P'}(s_t,a_t)
    \\& =\gamma\sum_{s_{t+1}\in\mc S} P(s_{t+1}|s_t,a_t) (\Value{\pi}{P}(s_{t+1})-\!\Value{\pi}{P'}(s_{t+1}))
    \\&\qquad\qquad+\gamma\sum_{s_{t+1}\in\mc S} (P(s_{t+1}|s_t,a_t)-\!P'(s_{t+1}|s_t,a_t)) \Value{\pi}{P'}(s_{t+1})
    \\&=\gamma\sum_{s_{t+1}\in\mc S} P(s_{t+1}|s_t,a_t) \sum_{a_{t+1}\in\mc A} \pi(a_{t+1}|s_{t+1}) (\Q{\pi}{P}(s_{t+1},a_{t+1})-\Q{\pi}{P'}(s_{t+1},a_{t+1}))
    \\&\qquad\qquad+\sum_{s_{t+1}\in\mc S} (P(s_{t+1}|s_t,a_t)-P'(s_{t+1}|s_t,a_t)) \Qsp{\pi}{P'}(s_t,a_t,s_{t+1})
    \\&=\gamma\!\!\sum_{\substack{s_{t+1}\in\mc S\\a_{t+1}\in\mc A}} \mb P^P_\pi(S_{t+1}=s_{t+1},A_{t+1}=a_{t+1}|S_t=s_t,A_t=a_t)  (\Q{\pi}{P}(s_{t+1},a_{t+1})-\Q{\pi}{P'}(s_{t+1},a_{t+1}))
    \\&\qquad\qquad+\sum_{s_{t+1}\in\mc S} (P(s_{t+1}|s_t,a_t)-P'(s_{t+1}|s_t,a_t)) \Qsp{\pi}{P'}(s_t,a_t,s_{t+1}), 
\end{align*}
where the first equality follows from Lemma~\ref{lem:recursion:PE}\ref{item:QinG}, the second equality follows from Lemmas~\ref{lem:recursion:PE}\ref{item:valueinQ} and~\ref{lem:recursion:PE}\ref{item:GinQ}, and the last equality holds because of~\eqref{eq:def:MDP}. Substituting the above equation for $t=1$ into the above equation for $t=0$ yields
\begin{equation*}
\begin{split}
& \Q{\pi}{P}(s_0,a_0)-\Q{\pi}{P'}(s_0,a_0)
\\&=\sum_{s_1\in\mc S} (P(s_1|s_0,a_0)-P'(s_1|s_0,a_0)) \Qsp{\pi}{P'}(s_0,a_0,s_1)
\\&\quad +\!\gamma\!\!\sum_{s_1\in\mc S,a_1\in\mc A}\!\! \mb P^P_\pi(S_1=s_1,A_1=a_1|S_0=s_0,\!A_0=a_0)\!\!\!
\\&\qquad\qquad\quad\sum_{s_2\in\mc S}(P(s_2|s_1,a_1)-\!P'(s_2|s_1,a_1\!)) \Qsp{\pi}{P'}\!\!(s_1,a_1,s_2)
\\&\quad+\gamma^2\sum_{s_2\in\mc S,a_2\in\mc A} \mb P^P_\pi(S_2=s_2,A_2=a_2|S_0=s_0,A_0=a_0)  (\Q{\pi}{P}(s_2,a_2)-\Q{\pi}{P'}(s_2,a_2)).
\end{split}
\end{equation*}
By iteratively expanding $\Q{\pi}{P}(s_t,a_t)-\Q{\pi}{P'}(s_t,a_t)$ for all $t\in\mb N$ and recalling that $\gamma\in (0,1)$, we then find
\begin{equation}\label{expr:pdl:nature}
\begin{split}   
& \Q{\pi}{P}(s_0,a_0)-\Q{\pi}{P'}(s_0,a_0)
\\&=\sum_{t=0}^\infty\! \gamma^t\!\!\!\!\sum_{s\in\mc S,a\in\mc A} \!\!\!\!\mb P^P_\pi(S_t=s,A_t=a|S_0=s_0,A_0=a_0) 
\\&\qquad\qquad\sum_{s'\in\mc S} (P(s'|s,a)-P'(s'|s,a)) \Qsp{\pi}{P'}(s,a,s')
\\&= \frac{1}{1-\gamma} \sum_{s\in\mc S,a\in\mc A} \freq{\pi}{P}(s,a|s_0,a_0)  \sum_{s'\in\mc S} (P(s'|s,a)-P'(s'|s,a)) \Qsp{\pi}{P'}(s,a,s'),
\end{split}
\end{equation}
where the second equality follows from the construction of $\freq{\pi}{P}(s,a|s_0,a_0)$ in Definition~\ref{def:visitation:distribution}.  By Lemma~\ref{lem:recursion:PE}\ref{item:valueinQ}, we finally obtain
\begin{align*}
    \Value{\pi}{P}(s_0)-\Value{\pi}{P'}(s_0)&=\sum_{a_0\in\mc A}\pi(a_0|s_0)(\Q{\pi}{P}(s_0,a_0)-\Q{\pi}{P'}(s_0,a_0))
    \\&= \frac{1}{1-\gamma} \!\!\sum_{s\in\mc S,a\in\mc A} \!\!\!\freqS{\pi}{P}(s|s_0)\pi(a|s) \!\! \sum_{s'\in\mc S} (P(s'|s,a)-\!P'(s'|s,a)) \Qsp{\pi}{P'}(s,a,s'),
\end{align*}
where the last equality follows from~\eqref{expr:pdl:nature} and the identity $\sum_{a_0\in\mc A}\freq{\pi}{P}(s,a|s_0,a_0)\pi(a_0|s_0)$ $=\freqS{\pi}{P}(s|s_0)\pi(a|s)$. Thus, the claim follows.
\end{proof}

Step~\ref{line:interpolation} of Algorithm~\ref{alg:CPI} readily implies that
\begin{align}\label{expr:transition:difference}
    \sum_{s'\in\mc S}| P^{(m+1)}(s'|s,a)-P^{(m)}(s'|s,a)|\le 2\alpha_m \quad\forall s\in\mc S, a\in\mc A.
\end{align}
Thus, the difference between any two consecutive iterates of Algorithm~\ref{alg:CPI} is bounded by twice the stepsize.
The next lemma, which is inspired by~\cite[Theorem 4.1]{kakade2002approximately}, translates this bound to one for the difference between the discounted state visitation frequencies corresponding to two consecutive iterates.

\begin{lemma}[Similarity between discounted state visitation frequencies]
    \label{lem:PE:distance}
The iterates of Algorithm~\ref{alg:CPI} satisfy
\begin{align*}
    \sum_{s\in\mc S}\left|\freqS{\pi}{P^{(m+1)}}(s|s_0)-\freqS{\pi}{P^{(m)}}(s|s_0)\right| \le \frac{2\alpha_m \gamma}{1-\gamma}\quad \forall m\in\mb Z_+.
\end{align*}
\end{lemma}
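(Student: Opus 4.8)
The plan is to adapt the argument behind \cite[Theorem~4.1]{kakade2002approximately}: represent each discounted state visitation distribution as a power series in the policy-induced transition matrix, exploit the fact that (sub)stochastic matrices act non-expansively on row vectors in the $\ell_1$ norm, and telescope the difference of the two series.

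First I would transfer the kernel bound \eqref{expr:transition:difference} to the $\pi$-averaged matrices. Writing $P_\pi^{(m)}(s,s')=\sum_{a\in\mc A}\pi(a|s)P^{(m)}(s'|s,a)$ for the Markov transition matrix induced by $\pi$ and $P^{(m)}$ (as in Remark~\ref{rem:V-formula}), the triangle inequality together with $\sum_{a\in\mc A}\pi(a|s)=1$ gives, for every $s\in\mc S$,
$$\sum_{s'\in\mc S}\bigl|P_\pi^{(m+1)}(s,s')-P_\pi^{(m)}(s,s')\bigr|\le \sum_{a\in\mc A}\pi(a|s)\sum_{s'\in\mc S}\bigl|P^{(m+1)}(s'|s,a)-P^{(m)}(s'|s,a)\bigr|\le 2\alpha_m,$$
so every row of $\Delta_m := P_\pi^{(m+1)}-P_\pi^{(m)}$ has $\ell_1$-norm at most $2\alpha_m$, where the last inequality uses \eqref{expr:transition:difference}.

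Next, by Definition~\ref{def:visitation:distribution} and Remark~\ref{rem:V-formula}, the row vector $\freqS{\pi}{P^{(m)}}(\cdot|s_0)$ equals $(1-\gamma)\sum_{t=0}^\infty\gamma^t\, e_{s_0}^\top (P_\pi^{(m)})^t$, where $e_{s_0}$ is the $s_0$-th standard basis vector. Subtracting the two series and using the telescoping identity $(P_\pi^{(m+1)})^t-(P_\pi^{(m)})^t=\sum_{k=0}^{t-1}(P_\pi^{(m+1)})^k\,\Delta_m\,(P_\pi^{(m)})^{t-1-k}$, I would bound each summand as follows: $e_{s_0}^\top(P_\pi^{(m+1)})^k$ is a probability vector, hence has unit $\ell_1$-norm; right-multiplication by $\Delta_m$ scales the $\ell_1$-norm by at most $2\alpha_m$ since the rows of $\Delta_m$ have $\ell_1$-norm $\le 2\alpha_m$; and further right-multiplication by the stochastic matrix $(P_\pi^{(m)})^{t-1-k}$ does not increase the $\ell_1$-norm. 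Summing over the $t$ terms yields $\bigl\|e_{s_0}^\top\bigl((P_\pi^{(m+1)})^t-(P_\pi^{(m)})^t\bigr)\bigr\|_1\le 2t\alpha_m$, and therefore
$$\sum_{s\in\mc S}\bigl|\freqS{\pi}{P^{(m+1)}}(s|s_0)-\freqS{\pi}{P^{(m)}}(s|s_0)\bigr|\le (1-\gamma)\sum_{t=1}^\infty\gamma^t\cdot 2t\alpha_m=2\alpha_m(1-\gamma)\cdot\frac{\gamma}{(1-\gamma)^2}=\frac{2\alpha_m\gamma}{1-\gamma}.$$

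I do not expect a genuine obstacle here; the calculation is routine. The two points that need care are (i) passing from the per-$(s,a)$ kernel bound $\sum_{s'}|P^{(m+1)}(s'|s,a)-P^{(m)}(s'|s,a)|\le 2\alpha_m$ to a row bound on the $\pi$-averaged matrix $\Delta_m$, and (ii) the $\ell_1$ operator-norm bookkeeping in the telescoped product when exactly one stochastic factor is swapped for the small perturbation $\Delta_m$ — this is precisely the step that produces the factor $\gamma/(1-\gamma)$ via the identity $\sum_{t\ge1}t\gamma^t=\gamma/(1-\gamma)^2$, rather than a bare $\gamma$.
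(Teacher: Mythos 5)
Your proof is correct and follows essentially the same route as the paper: the paper bounds the time-$t$ marginals via the recursion $\|\rho_t^{P^{(m+1)}}-\rho_t^{P^{(m)}}\|_1\le 2\alpha_m+\|\rho_{t-1}^{P^{(m+1)}}-\rho_{t-1}^{P^{(m)}}\|_1$, which when unrolled is exactly your matrix-power telescoping and yields the same intermediate bound $2t\alpha_m$ followed by the same geometric-series summation $(1-\gamma)\sum_{t\ge 1}\gamma^t\,2t\alpha_m=2\alpha_m\gamma/(1-\gamma)$. Your explicit handling of the $\pi$-averaged rows and the $\ell_1$ non-expansiveness is just the matrix-algebra phrasing of the paper's probabilistic argument, so there is nothing to flag.
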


\begin{proof}[Proof of Lemma~\ref{lem:PE:distance}]
We use $\rho_t^P\in \Delta(\mc S)$ to denote the probability mass function of~$S_t$ under~$\mb P^P_\pi$ conditional on $S_0=s_0$. 
Its dependence on~$\pi$ and~$s_0$ is suppressed to avoid clutter. Note first that for any $t\in\mb N$ we have
\begin{align*}
\rho_t^{P^{(m+1)}}(s')-\!\rho_t^{P^{(m)}}\!(s')= & \!\!\!\!\sum_{s\in\mc S, a\in\mc A}\!\!\left(\rho_{t-1}^{P^{(m+1)}}\!(s) P^{(m+1)}(s'|s,a)-\!\rho_{t-1}^{P^{(m)}}\!(s) P^{(m)}(s'|s,a)\right)\!\pi(a|s) \\
= & \sum_{s\in\mc S} \rho_{t-1}^{P^{(m+1)}}(s) \sum_{a\in\mc A}\left(P^{(m+1)}(s'|s,a)-P^{(m)}(s'|s,a)\right) \pi(a|s) \\
& +\sum_{s\in\mc S}\left(\rho_{t-1}^{P^{(m+1)}}(s)-\rho_{t-1}^{P^{(m)}}(s)\right) \sum_{a\in\mc A}\!\! P^{(m)}(s'|s,a) \pi(a|s) \quad \forall s'\in\mc S.
\end{align*}
Taking absolute values on both sides, using the triangle inequality and summing over $s'$ then yields
\begin{align*}
\left\|\rho_t^{P^{(m+1)}}-\rho_t^{P^{(m)}}\right\|_1 \leq & \sum_{s\in\mc S} \rho_{t-1}^{P^{(m+1)}}(s) \sum_{a\in\mc A}\pi(a|s) \sum_{s'\in\mc S}\left|P^{(m+1)}(s'|s,a)-P^{(m)}(s'|s,a)\right| \\
& +\sum_{s\in\mc S}\left|\rho_{t-1}^{P^{(m+1)}}(s)-\rho_{t-1}^{P^{(m)}}(s)\right| \sum_{s'\in\mc S} \sum_{a\in\mc A} P^{(m)}(s'|s,a) \pi(a|s) \\
\leq & 2 \alpha_m+\left\|\rho_{t-1}^{P^{(m+1)}}-\rho_{t-1}^{P^{(m)}}\right\|_1 ,
\end{align*}
where the second inequality follows from~\eqref{expr:transition:difference}. 
By unfolding this recursive bound for all time points from~$t$ to~$0$ and noting that $\rho_0^{P^{(m+1)}}=\rho_0^{P^{(m)}},$ we then obtain
\begin{align}\label{finite:vistation:freq:bound}
    \left\|\rho_t^{P^{(m+1)}}-\rho_t^{P^{(m)}}\right\|_1 \le 2t\alpha_m.
\end{align}
Next, from the definition of $\freqS{\pi}{P}$ it is clear that
$$
d_{\pi}^{P^{(m+1)}}(s|s_0)-d_{\pi}^{P^{(m)}}(s|s_0)=(1-\gamma) \sum_{t=0}^{\infty} \gamma^t\left(\rho_t^{P^{(m+1)}}(s)-\rho_t^{P^{(m)}}(s)\right)\qquad\forall s\in\mc S.
$$
By~\eqref{finite:vistation:freq:bound}, we therefore find
$$\sum_{s\in\mc S}\left|d_{\pi}^{P^{(m+1)}}(s|s_0)-d_{\pi}^{P^{(m)}}(s|s_0)\right| \leq(1-\gamma) \sum_{t=0}^{\infty} \gamma^t 2 t \alpha_m \le \frac{2 \alpha_m \gamma}{1-\gamma},$$
where the second inequality holds because $\sum_{t=0}^{\infty} \gamma^t t=\gamma/(1-\gamma)^2$.
\end{proof}

The next lemma shows that, under the adaptive stepsize schedule of Theorem~\ref{thm:CPI:PE}, the objective function values of the transition kernels generated by Algorithm~\ref{alg:CPI} are non-decreasing. It is inspired by~\cite[Corollary 4.2]{kakade2002approximately}. 
\begin{lemma}[Adversary's policy improvement]
\label{lem:PE:policy:improve}
Under the stepsize schedule of Theorem~\ref{thm:CPI:PE}, we have $\Value{\pi}{P^{(m+1)}}(s_0)-\Value{\pi}{P^{(m)}}(s_0) \ge \mb G_m^2(1-\gamma)^4/(8 \gamma^2)$ for all $m\in\mb Z_+$.
\end{lemma}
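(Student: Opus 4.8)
The plan is to obtain an exact decomposition of the one-step progress $\Value{\pi}{P^{(m+1)}}(s_0)-\Value{\pi}{P^{(m)}}(s_0)$ into the ``first-order'' term $\alpha_m\mb G_m$ plus a controllable remainder, and then to substitute the prescribed stepsize. First I would apply the performance difference lemma (Lemma~\ref{lem:pdl:Q}) with $P=P^{(m+1)}$ and $P'=P^{(m)}$ and use the interpolation rule of Algorithm~\ref{alg:CPI}, i.e.\ $P^{(m+1)}(s'|s,a)-P^{(m)}(s'|s,a)=\alpha_m\bigl(P_\epsilon(s'|s,a)-P^{(m)}(s'|s,a)\bigr)$, to get
\begin{align*}
\Value{\pi}{P^{(m+1)}}(s_0)-\Value{\pi}{P^{(m)}}(s_0)=\frac{\alpha_m}{1-\gamma}\sum_{s,a}\freqS{\pi}{P^{(m+1)}}(s|s_0)\pi(a|s)\sum_{s'}\bigl(P_\epsilon(s'|s,a)-P^{(m)}(s'|s,a)\bigr)\Qsp{\pi}{P^{(m)}}(s,a,s').
\end{align*}
On the other hand, Lemma~\ref{lemma:policy:gradient:OPE} (equivalently, the calculation in the proof of Proposition~\ref{prop:deg:nonrect}) gives $\mb G_m=\langle\nabla_P\Value{\pi}{P^{(m)}}(s_0),P_\epsilon-P^{(m)}\rangle=\frac{1}{1-\gamma}\sum_{s,a}\freqS{\pi}{P^{(m)}}(s|s_0)\pi(a|s)\sum_{s'}(P_\epsilon(s'|s,a)-P^{(m)}(s'|s,a))\Qsp{\pi}{P^{(m)}}(s,a,s')$, which is the displayed expression divided by $\alpha_m$ but with $\freqS{\pi}{P^{(m+1)}}(\cdot|s_0)$ replaced by $\freqS{\pi}{P^{(m)}}(\cdot|s_0)$. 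Hence $\Value{\pi}{P^{(m+1)}}(s_0)-\Value{\pi}{P^{(m)}}(s_0)=\alpha_m\mb G_m+E_m$, where $E_m$ is obtained from the displayed expression by replacing $\freqS{\pi}{P^{(m+1)}}(\cdot|s_0)$ with $\freqS{\pi}{P^{(m+1)}}(\cdot|s_0)-\freqS{\pi}{P^{(m)}}(\cdot|s_0)$; thus $E_m$ captures only the mismatch between the discounted state visitation distributions at two consecutive iterates.

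Next I would bound $|E_m|$. Lemma~\ref{lem:PE:distance} gives $\sum_{s}|\freqS{\pi}{P^{(m+1)}}(s|s_0)-\freqS{\pi}{P^{(m)}}(s|s_0)|\le 2\alpha_m\gamma/(1-\gamma)$. Moreover, for every $(s,a)$ the identity $\Qsp{\pi}{P^{(m)}}(s,a,s')=c(s,a)+\gamma\Value{\pi}{P^{(m)}}(s')$ (Lemma~\ref{lem:recursion:PE}) together with $\sum_{s'}(P_\epsilon(s'|s,a)-P^{(m)}(s'|s,a))=0$ implies $\sum_{s'}(P_\epsilon(s'|s,a)-P^{(m)}(s'|s,a))\Qsp{\pi}{P^{(m)}}(s,a,s')=\gamma\sum_{s'}(P_\epsilon(s'|s,a)-P^{(m)}(s'|s,a))\Value{\pi}{P^{(m)}}(s')$, whose absolute value is at most $\gamma/(1-\gamma)$, since $(P_\epsilon-P^{(m)})(\cdot|s,a)$ has zero total mass and $\ell_1$-norm at most $2$ while $\Value{\pi}{P^{(m)}}$ has oscillation at most $1/(1-\gamma)$. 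Combining this with $\sum_a\pi(a|s)=1$ yields $|E_m|\le 2\alpha_m^2\gamma^2/(1-\gamma)^3$, hence
\begin{align*}
\Value{\pi}{P^{(m+1)}}(s_0)-\Value{\pi}{P^{(m)}}(s_0)\ge\alpha_m\mb G_m-\frac{2\alpha_m^2\gamma^2}{(1-\gamma)^3}.
\end{align*}

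Finally I would observe that the right-hand side is a concave quadratic in $\alpha_m$ whose unique maximizer is exactly $\alpha_m=\mb G_m(1-\gamma)^3/(4\gamma^2)$, the stepsize prescribed in Theorem~\ref{thm:CPI:PE}, and at this value it equals $\alpha_m\mb G_m/2=\mb G_m^2(1-\gamma)^3/(8\gamma^2)\ge\mb G_m^2(1-\gamma)^4/(8\gamma^2)$, the last inequality following from $\gamma\in(0,1)$; this is the claim. It remains only to record that $\mb G_m\ge 0$ on the iterations that Algorithm~\ref{alg:CPI} actually performs (the loop runs while the Frank--Wolfe gap exceeds $\epsilon>0$), so $\alpha_m\ge 0$, and that $\alpha_m\in[0,1]$, so $P^{(m+1)}=(1-\alpha_m)P^{(m)}+\alpha_m P_\epsilon$ remains in the convex set $\mc P$.

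The step I expect to be the main obstacle is the bound on $E_m$: one has to extract the factor $\gamma/(1-\gamma)$ — rather than the naive $1/(1-\gamma)$ that would follow from $|\Qsp{\pi}{P^{(m)}}|\le 1/(1-\gamma)$ — for the $\Qsp{\pi}{P^{(m)}}$-weighted inner sum, by exploiting the zero-mass cancellation together with the oscillation bound on $\Value{\pi}{P^{(m)}}$. It is precisely this sharper constant that makes the prescribed stepsize coincide with the maximizer of the quadratic and thereby produces a strictly positive improvement; the remaining manipulations — applying Lemma~\ref{lem:pdl:Q}, recognizing $\mb G_m$, and substituting the stepsize — are routine.
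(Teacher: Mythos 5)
Your proposal is correct and follows essentially the same route as the paper: performance difference lemma plus the interpolation rule, adding and subtracting $\alpha_m\mb G_m$, bounding the remainder via Lemma~\ref{lem:PE:distance} and the $\gamma/(1-\gamma)$ bound (which the paper obtains through the advantage function $\A{\pi}{P}$ and Lemma~\ref{lemma:advantage:Qsp}, exactly your zero-mass cancellation in disguise), and then substituting the prescribed stepsize, which maximizes the resulting concave quadratic. The only cosmetic difference is that you carry the signed difference $(P_\epsilon-P^{(m)})$ against $\Qsp{\pi}{P^{(m)}}$ directly instead of rewriting it as an advantage-weighted sum, and you make explicit the final weakening from $(1-\gamma)^3$ to $(1-\gamma)^4$, which the paper leaves implicit.
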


\begin{proof}[Proof of Lemma~\ref{lem:PE:policy:improve}]
Throughout the proof we use $P_\epsilon$ to denote the $\epsilon$-optimal solution of problem~\eqref{expr:direction:finding} that is computed in the $m$-th iteration of Algorithm~\ref{alg:CPI}. By Lemma~\ref{lem:pdl:Q}, we then have
\begin{equation}\label{eq:pdl:advantage}
 \vspace{-5pt}
\begin{split}
&(1-\gamma) (\Value{\pi}{P^{(m+1)}}(s_0)-\Value{\pi}{P^{(m)}}(s_0))
\\&=\!\!\!\!\sum_{s\in\mc S,a\in\mc A} \freqS{\pi}{P^{(m+1)}}(s|s_0)\pi(a|s)  \sum_{s'\in\mc S} (P^{(m+1)}(s'|s,a)-P^{(m)}(s'|s,a)) \Qsp{\pi}{P^{(m)}}(s,a,s')
\\&=\!\!\!\!\sum_{s\in\mc S,a\in\mc A} \freqS{\pi}{P^{(m+1)}}(s|s_0)\pi(a|s)  \sum_{s'\in\mc S}\alpha_m (P_\epsilon(s'|s,a)-P^{(m)}(s'|s,a)) \Qsp{\pi}{P^{(m)}}(s,a,s')
\\&=\!\!\!\!\sum_{s\in\mc S,a\in\mc A} \freqS{\pi}{P^{(m+1)}}(s|s_0)\pi(a|s)  \sum_{s'\in\mc S}\alpha_m P_\epsilon(s'|s,a)\A{\pi}{P^{(m)}}(s,a,s'),
 \end{split}
 \vspace{-5pt}
\end{equation}
where the second equality follows from the construction of~$P^{(m+1)}$ in Algorithm~\ref{alg:CPI}, and the last equality follows from Lemma~\ref{lemma:advantage:Qsp}.
Adding and subtracting $\alpha_m\mb G_m(1-\gamma)$ on the right hand side of~\eqref{eq:pdl:advantage} and using 
a similar reasoning as in the proof of Proposition~\ref{prop:deg:nonrect} to express the approximate Frank-Wolfe gap~$\mb G_m$ in terms of the advantage function~$A_\pi^{P^{(m)}}$, we obtain
\begin{align*}
    &(1-\gamma) (\Value{\pi}{P^{(m+1)}}(s_0)-\Value{\pi}{P^{(m)}}(s_0))
    \\&=\alpha_m \mb G_m(1-\gamma) +\alpha_m \sum_{s\in\mc S,a\in\mc A} \freqS{\pi}{P^{(m+1)}}(s|s_0)\pi(a|s)  \sum_{s'\in\mc S} P_\epsilon(s'|s,a)\A{\pi}{P^{(m)}}(s,a,s')
\\&\quad -\alpha_m \sum_{s\in\mc S,a\in\mc A} \freqS{\pi}{P^{(m)}}(s|s_0)\pi(a|s)  \sum_{s'\in\mc S} P_\epsilon(s'|s,a)\A{\pi}{P^{(m)}}(s,a,s')
\\&\ge \alpha_m \mb G_m(1-\gamma) - \alpha_m \max_{s,s'\in\mc S, a\in\mc A, P\in\mc P} | \A{\pi}{P}(s,a,s')|\sum_{s\in\mc S}\left|\freqS{\pi}{P^{(m+1)}}(s|s_0)-\freqS{\pi}{P^{(m)}}(s|s_0)\right|
\\&\ge \alpha_m \mb G_m(1-\gamma) -  \frac{\alpha_m \gamma}{1-\gamma}\sum_{s\in\mc S}\left|\freqS{\pi}{P^{(m+1)}}(s|s_0)-\freqS{\pi}{P^{(m)}}(s|s_0)\right|
\\&\ge \alpha_m\left(\mb G_m(1-\gamma)-\frac{2 \alpha_m \gamma^2}{(1-\gamma)^2}\right).
\end{align*}
The first inequality in the above expression follows from H\"older's inequality. Recall next that $c(s,a)\in [0,1]$ for all $(s,a)\in\mc S\times\mc A$, which implies that $0\le \Value{\pi}{P}(s_0)\le \sum_{t=0}^\infty \gamma^t= 1/(1-\gamma)$ for all $P\in\mc P$. By the definition of the advantage function and by Lemmas~\ref{lem:recursion:PE}\ref{item:QinG} and~\ref{lem:recursion:PE}\ref{item:GinQ}, we then have $|\A{\pi}{P}(s,a,s')|\le \gamma/(1-\gamma)$ for all $s,s'\in\mc S, a\in\mc A,$ and $ P\in\mc P$. This justifies the second inequality. The last inequality follows from Lemma~\ref{lem:PE:distance}. The stepsize $\alpha_m = \mb G_m (1-\gamma)^3/(4\gamma^2)$ was chosen to maximize the last expression. Replacing~$\alpha_m$ by this formula yields the desired bound.
\end{proof}

We can now show that the proposed FW algorithm terminates within $\mc O(1/\epsilon^2)$ iterations with a Frank-Wolfe gap of at most $2\epsilon$.
\begin{lemma}[Finite termination]
\label{lem:local:opt:CPI}
Under the stepsize schedule of Theorem~\ref{thm:CPI:PE}, Algorithm~\ref{alg:CPI} terminates within $8 \gamma^2/(\epsilon^2(1-\gamma)^5) $  iterations, and its output~$\widehat P$ satisfies $\max_{P\in\mc P} \langle \nabla_P \Value{\pi}{\widehat P}(s_0),P-\widehat P\rangle \le 2\epsilon.$
\end{lemma}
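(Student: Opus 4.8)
The plan is to combine the per-step ascent guarantee of Lemma~\ref{lem:PE:policy:improve} with the trivial a priori bound on the range of the objective. Since $c(s,a)\in[0,1]$ for all $(s,a)\in\mc S\times\mc A$, we have $0\le \Value{\pi}{P}(s_0)\le\sum_{t=0}^{\infty}\gamma^t=1/(1-\gamma)$ for every $P\in\mc P$, so the total possible increase of $\Value{\pi}{P^{(m)}}(s_0)$ over the course of Algorithm~\ref{alg:CPI} is at most $1/(1-\gamma)$. As each iteration that does not trigger termination strictly improves the objective by a quantity bounded below in terms of $\epsilon$, only finitely many such iterations can occur.

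Concretely, suppose the algorithm has not terminated before iteration $m$, so that the approximate Frank--Wolfe gap satisfies $\mb G_k>\epsilon$ for every $k\in\{0,\ldots,m-1\}$. Under the stepsize schedule $\alpha_k=\mb G_k(1-\gamma)^3/(4\gamma^2)$ of Theorem~\ref{thm:CPI:PE}, Lemma~\ref{lem:PE:policy:improve} gives
$$\Value{\pi}{P^{(k+1)}}(s_0)-\Value{\pi}{P^{(k)}}(s_0)\ge\frac{\mb G_k^2(1-\gamma)^4}{8\gamma^2}>\frac{\epsilon^2(1-\gamma)^4}{8\gamma^2}\qquad\forall k\in\{0,\ldots,m-1\}.$$
Summing this telescoping inequality over $k$ and invoking $0\le\Value{\pi}{P^{(0)}}(s_0)$ and $\Value{\pi}{P^{(m)}}(s_0)\le 1/(1-\gamma)$ yields $m\,\epsilon^2(1-\gamma)^4/(8\gamma^2)<1/(1-\gamma)$, i.e.\ $m<8\gamma^2/(\epsilon^2(1-\gamma)^5)$. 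Hence the stopping criterion $\mb G_m\le\epsilon$ must be met at some iteration $m^\star\le 8\gamma^2/(\epsilon^2(1-\gamma)^5)$, establishing the iteration bound.

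It remains to verify the stationarity guarantee for the returned iterate $\widehat P=P^{(m^\star)}$. By construction, $P_\epsilon$ is an $\epsilon$-optimal solution of the direction-finding subproblem~\eqref{expr:direction:finding} with $P'=P^{(m^\star)}$, so its objective value is within $\epsilon$ of the optimal value of~\eqref{expr:direction:finding}; since $\langle\nabla_P\Value{\pi}{P^{(m^\star)}}(s_0),P^{(m^\star)}-P^{(m^\star)}\rangle=0$, this gives
$$\max_{P\in\mc P}\langle\nabla_P\Value{\pi}{\widehat P}(s_0),P-\widehat P\rangle\le\langle\nabla_P\Value{\pi}{P^{(m^\star)}}(s_0),P_\epsilon-P^{(m^\star)}\rangle+\epsilon=\mb G_{m^\star}+\epsilon\le 2\epsilon,$$
which is the desired conclusion.

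I do not anticipate a genuine obstacle here, as the substantive work is already contained in the ascent lemma~\ref{lem:PE:policy:improve}. The only points requiring care are the index bookkeeping in Algorithm~\ref{alg:CPI} (ensuring that the $P_\epsilon$ appearing in $\mb G_{m^\star}$ is the one computed from the very iterate $P^{(m^\star)}$ that is returned) and the observation that the \emph{approximate} Frank--Wolfe gap used in the stopping test overestimates the \emph{exact} stationarity gap $\max_{P\in\mc P}\langle\nabla_P\Value{\pi}{\widehat P}(s_0),P-\widehat P\rangle$ by at most the tolerance $\epsilon$, which is precisely what produces the factor $2$ in the final bound.
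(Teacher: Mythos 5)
Your proof is correct and follows essentially the same argument as the paper: a per-iteration improvement of at least $\epsilon^2(1-\gamma)^4/(8\gamma^2)$ from Lemma~\ref{lem:PE:policy:improve} while $\mb G_m>\epsilon$, combined with the bound $0\le \Value{\pi}{P}(s_0)\le 1/(1-\gamma)$, yields the iteration count, and at termination the $\epsilon$-optimality of $P_\epsilon$ converts $\mb G_{m}\le\epsilon$ into the $2\epsilon$ stationarity bound. No substantive differences from the paper's proof.
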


Theorem 5 in~\cite{li2021distributionally} also shows that Algorithm~\ref{alg:CPI} converges to an approximate stationary point but does not provide an explicit expression for the iteration complexity. While~\cite{li2021distributionally} focuses on a specific non-rectangular uncertainty set constructed from the conditional relative entropy and uses exact line search to determine the stepsize sequence, which is computationally expensive, Lemma~\ref{lem:local:opt:CPI} applies to general non-rectangular uncertainty sets and leverages an easily computable stepsize schedule. In addition,~\cite{li2021distributionally} assumes to have access to an exact optimizer of the direction-finding subproblem, while Lemma~\ref{lem:local:opt:CPI} only requires access to an $\epsilon$-optimal solution.

\begin{proof}[Proof of Lemma~\ref{lem:local:opt:CPI}]
Note that if Algorithm~\ref{alg:CPI} does \textit{not} terminate in iteration $m$, then $\mb G_m >\epsilon$, and hence $\Value{\pi}{P^{(m+1)}}(s_0)\ge \Value{\pi}{P^{(m)}}(s_0) +\epsilon^2(1-\gamma)^4/(8 \gamma^2)$ by Lemma~\ref{lem:PE:policy:improve}.
As $c(s,a)\in[0,1]$, we have $0\le\Value{\pi}{P}(s_0)\le \sum_{t=0}^\infty \gamma^t= 1/(1-\gamma)$ for every $P\in\mc P$. The above per-iteration improvement can thus only persist for at most $8\gamma^2/(\epsilon^2(1-\gamma)^5)$ iterations. If Algorithm~\ref{alg:CPI} terminates in iteration~$m$, however, then $\mb G_m\le\epsilon,$ and thus we have
\begin{align*}
&\max_{P\in\mc P} \langle \nabla_P \Value{\pi}{P^{(m)}}(s_0),P-P^{(m)}\rangle\le \langle \nabla_P \Value{\pi}{P^{(m)}}(s_0),P_\epsilon-P^{(m)}\rangle + \epsilon
= \mb G_m+\epsilon\le 2\epsilon.
\end{align*}
Hence, the claim follows.
\end{proof}

We are now ready to establish the convergence behavior of Algorithm~\ref{alg:CPI}.

\begin{proof}[Proof of Theorem~\ref{thm:CPI:PE}]
Let $P_{\mathsf{s}}^\star$ be any maximizer of the robust policy evaluation problem~\eqref{expr:DMDP} when $\mc P$ is replaced with the smallest $s$-rectangular uncertainty set $\mc P_{\mathsf s}$ that contains $\mc P$. 
As $\mc P\subseteq \mc P_{\mathsf{s}},$ we have
\begin{align*}
\ValRob{\pi}(s_0)-\!\Value{\pi}{\widehat P}(s_0)
\!=\!\max_{P\in\mc P}\! \Value{\pi}{P}(s_0)-\!\Value{\pi}{\widehat P}(s_0)
\le \max_{P_{\mathsf{s}}\in\mc P_{\mathsf s}}\Value{\pi}{P_{\mathsf{s}}}(s_0)-\!\Value{\pi}{\widehat P}(s_0)
=\Value{\pi}{P_{\mathsf{s}}^\star}(s_0)-\!\Value{\pi}{\widehat P}(s_0).
\end{align*}
This in turn implies that
\begin{align*}
\ValRob{\pi}\!(\!s_0\!)\!-\!\Value{\pi}{\widehat P}\!(\!s_0\!)\!
&\le \Value{\pi}{P_{\mathsf{s}}^\star}(s_0)-\Value{\pi}{\widehat P}(s_0)
\\&=\frac{1}{1-\gamma}\sum_{s\in\mc S,a\in\mc A} \freqS{\pi}{P_{\mathsf{s}}^\star}(s|s_0)\pi(a|s)  \sum_{s'\in\mc S} (P_{\mathsf{s}}^\star(s'|s,a) -\widehat P(s'|s,a))\Qsp{\pi}{\widehat P}(s,a,s')
\\&= \frac{1}{1-\gamma}\sum_{s\in\mc S,a\in\mc A} \freqS{\pi}{P_{\mathsf{s}}^\star}(s|s_0)\pi(a|s)  \sum_{s'\in\mc S} P_{\mathsf{s}}^\star(s'|s,a) \A{\pi}{\widehat P}(s,a,s')
\\&\le \frac{1}{1-\gamma}\sum_{s\in\mc S} \freqS{\pi}{P_{\mathsf{s}}^\star}(s|s_0)\max_{P_{\mathsf{s}}\in\mc P_{\mathsf{s}}}\sum_{a\in\mc A}\pi(a|s)  \sum_{s'\in\mc S} P_{\mathsf{s}}(s'|s,a) \A{\pi}{\widehat P}(s,a,s')
\\& \leq \frac{1}{1-\gamma} \delta_d(P^\star_{\mathsf s}, \widehat P)\sum_{s\in\mc S} \freqS{\pi}{\widehat P}(s|s_0)\max_{P_{\mathsf{s}}\in\mc P_{\mathsf{s}}}\sum_{a\in\mc A}\pi(a|s)  \sum_{s'\in\mc S} P_{\mathsf{s}}(s'|s,a) \A{\pi}{\widehat P}(s,a,s')
\\&=\frac{1}{1-\gamma} \delta_d(P^\star_{\mathsf s}, \widehat P)\max_{P_{\mathsf{s}}\in\mc P_{\mathsf{s}}}\sum_{s\in\mc S} \freqS{\pi}{\widehat P}(s|s_0)\sum_{a\in\mc A}\pi(a|s)  \sum_{s'\in\mc S} P_{\mathsf{s}}(s'|s,a) \A{\pi}{\widehat P}(s,a,s')
\\&= \delta_d(P^\star_{\mathsf s}, \widehat P) \max_{P_{\mathsf{s}}\in\mc P_{\mathsf{s}}}\langle \nabla_P \Value{\pi}{\widehat P}(s_0), P_{\mathsf{s}}-\widehat P\rangle,
\end{align*}
where the first two equalities exploit Lemma~\ref{lem:pdl:Q} and Lemma~\ref{lemma:advantage:Qsp}, respectively.
The third inequality follows from the definition of the distribution mismatch coefficient $\delta_d(\!P^\star_{\mathsf s}, \widehat P)$ and from H\"older's inequality, which applies because $\sum_{s'\in\mc S}\!\widehat P(s'|s,a)\A{\pi}{\widehat P}(\!s,\!a,\!s')$ $=0$ and hence $\max_{P_{\mathsf{s}}\in\mc P_{\mathsf{s}}}\sum_{a\in\mc A}\pi(a|s) \sum_{s'\in\mc S}\allowbreak P(s'|s,a) \A{\pi}{\widehat P}(s,a,s')\ge0$ for all $ s\in\mc S$. The third equality exploits the $s$-rectangularity of $\mc P_{\mathsf{s}}$, and the last equality follows from a variant of Proposition~\ref{prop:deg:nonrect} where~$\mc P$ is replaced by~$\mc P_{\mathsf{s}}$. 
Hence, we find
\begin{align*}
\ValRob{\pi}(s_0)-\Value{\pi}{\widehat P}(s_0)
& \le \delta_d(P^\star_{\mathsf s}, \widehat P) \left(\max_{P_{\mathsf{s}}\in\mc P_{\mathsf{s}}}\langle \nabla_P \Value{\pi}{\widehat P}(s_0), P_{\mathsf{s}}-\widehat P\rangle -\max_{P\in\mc P} \langle \nabla_P \Value{\pi}{\widehat P}(s_0), P-\widehat P\rangle
\right.
\\&\qquad\qquad\qquad\left.+\max_{P\in\mc P} \langle \nabla_P \Value{\pi}{\widehat P}(s_0), P-\widehat P\rangle\right)\\
& \leq \delta_d(P^\star_{\mathsf s}, \widehat P)(\delta_{\mc P}(\widehat P)+2\epsilon) \leq \delta_d(\delta_{\mc P}+2\epsilon),
\end{align*}
where the second inequality holds thanks to the definition of $\delta_{\mc P}(\widehat P)$ and Lemma~\ref{lem:local:opt:CPI}. The claim finally follows because $\delta_d(P^\star_{\mathsf s}, \widehat P)$ and $\delta_{\mc P}(\widehat P)$ are trivially bounded above by~$\delta_d$ and~$\delta_{\mc P},$ respectively, which are independent of the output~$\widehat P$ of Algorithm~\ref{alg:CPI}.
\end{proof}

\vspace{-5pt}
\section{Robust Policy Improvement} \label{sec:PL}


We now develop an actor-critic algorithm to solve the robust policy improvement problem~\eqref{def:policy:learning:objective} for a fixed initial state~$s_0$ to global optimality; see Algorithm~\ref{alg:PG-min-oracle}.
In each iteration $k\in\mb Z_+$, Algorithm~\ref{alg:PG-min-oracle} first computes an $\epsilon$-optimal solution $\Pk$ of the robust policy evaluation problem~\eqref{expr:DMDP} associated with the current policy~$\pik$ (\textit{critic}) and
then applies a projected gradient step to find a new policy $\pikp$ that locally improves the value function associated with the current transition kernel $\Pk$ (\textit{actor}). The critic's subproblem could be addressed with Algorithm~\ref{alg:PLD}, for example, which outputs an $\epsilon$-optimal solution of the robust policy evaluation problem with high probability. The actor's subproblem consists in projecting a vector onto the probability simplex~$\Pi,$ which can be done efficiently~\cite{wang2013projection}.

\begin{algorithm}[h!] 
  \caption{Actor-critic algorithm for solving the robust policy improvement problem~\eqref{def:policy:learning:objective}}
  \label{alg:PG-min-oracle}
\begin{algorithmic}[1]
\REQUIRE Iteration number $K\in\mb N$, stepsize $\eta>0,$ tolerance $\epsilon>0$
\STATE Initialize  $\pi^{(0)}(a|s)=1/A\ \forall s\in\mc S,a\in\mc A,$ and set $ k\leftarrow 0$
\WHILE{$k\le K-1$}
  \STATE \textit{Critic}: Find $\Pk\in\mc P$ such that
  $\Value{\pik}{\Pk}(s_0)\ge \Value{\pik}{\star}(s_0)-\epsilon$
  \label{alg:PG-min-oracle:line:PE}
  \STATE \textit{Actor}: $\pikp=\mathrm{Proj}_{\Pi} \left(\pik+\eta\nabla_{\pi} \Value{\pik}{\Pk}(s_0)\right)$
  \STATE $k \leftarrow k+1$
\ENDWHILE
\end{algorithmic}
\end{algorithm}
The following assumption is essential for the main results of this section.
\begin{assumption}[Irreducibility]\label{assume:irreducible:pi}
The Markov chain $\{S_t\}_{t=0}^\infty$ is irreducible for any  $P\in\mc P$  and $\pi\in\Pi$.
\end{assumption}
The sole purpose of Assumption~\ref{assume:irreducible:pi} is to ensure that the distribution mismatch coefficient $C=\max_{\pi,\pi'\in\Pi,s\in\mc S,P\in\mc P}\freqS{\pi}{P}(s|s_0)/\freqS{\pi'}{P}(s|s_0)$ is finite and strictly positive. This can be shown by using a similar reasoning as in Remark~\ref{remark:distribution:mismatch}.
Instead of requiring irreducibility, one could also simply require that all components of the initial state distribution $\rho\in\Delta(\mathcal S)$ are strictly positive. However, this would imply that the initial state is random, which contradicts the standard assumption that states are observed.

Recall now from Remark~\ref{rem:V-formula} and the surrounding discussion that~$\Value{\pi}{P}$ constitutes a rational function of~$\pi$ that is defined on a neighborhood of~$\Pi$. The following lemma establishes several desirable properties this value function. In the remainder of this section, we frequently use the constants $L=\sqrt{A}/(1-\gamma)^2$ and $\ell=2\gamma A/(1-\gamma)^3$.
\begin{lemma}[Properties of the value function]\label{lemma:extend:V}
Suppose that Assumption~\ref{assume:irreducible:pi} holds. Then, for every $\delta>0,$ there exists an open neighborhood $\Pi_\delta$ of $\Pi$ such that any point in~$\Pi_\delta$ has a (Frobenius) distance of at most~$\delta$ from some point in~$\Pi$, and the value function $\Value{\pi}{P}(s_0)$ satisfies the following conditions for every $P\in\mc P$.
\begin{enumerate}[label = (\roman*)]
\item \label{item:smooth:extend:V} $\Value{\pi}{P}(s_0)$ is $(L+\delta)$-Lipschitz continuous and $(\ell+\delta)$-smooth in $\pi$ on $\Pi_\delta$.
\item \label{eq:gd:fix:P}
$C^{-1}( V_{\pi}^{P}(s_0)-\min_{\pi'\in\Pi}V_{\pi'}^{P}(s_0)) -\delta\leq \max _{\pi'\in\Pi_\delta}\langle \pi-\pi',\nabla_\pi V_{\pi}^{P}(s_0)\rangle$ for all $\pi\in\Pi_\delta.$
\end{enumerate}
\end{lemma}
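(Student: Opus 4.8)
\textbf{Proof plan for Lemma~\ref{lemma:extend:V}.} The plan is to construct the neighborhood $\Pi_\delta$ explicitly and then verify the two assertions by continuity and compactness arguments, reducing part~\ref{eq:gd:fix:P} to the non-robust performance difference lemma (Lemma~\ref{lem:pdl:Q} applied with the policy rather than the kernel as the varying argument, or equivalently the classical result of~\cite{kakade2002approximately}). For the neighborhood, recall that $\Value{\pi}{P}(s_0)$ is a rational function of~$\pi$ well-defined on an open set containing~$\Pi$ (Remark~\ref{rem:V-formula} and the surrounding discussion), so for each $\delta>0$ one can take $\Pi_\delta = \{\pi' : \operatorname{dist}_{\mathbf F}(\pi',\Pi) < \delta\}$ intersected with that open set; shrinking $\delta$ if necessary we may assume $\Pi_\delta$ lies inside the domain of the rational extension. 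Since $\mc P$ is compact and $\Value{\pi}{P}(s_0)$ is jointly continuous (indeed $C^\infty$) in $(\pi,P)$ on $\Pi_\delta\times\mc P$, all the constants appearing below can be chosen uniformly over $P\in\mc P$.

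For part~\ref{item:smooth:extend:V}, the strategy is to first establish the bounds $\|\nabla_\pi \Value{\pi}{P}(s_0)\|_{\mathbf F}\le L$ and $\|\nabla^2_\pi \Value{\pi}{P}(s_0)\|\le \ell$ on the closed set~$\Pi$ itself, using the policy gradient formula (the analogue of Lemma~\ref{lemma:policy:gradient:OPE} with roles of $\pi$ and $P$ exchanged, i.e.\ the Sutton et al.\ policy gradient theorem~\cite{sutton1999policy}) together with the crude bounds $0\le \Value{\pi}{P}(s_0)\le 1/(1-\gamma)$, $|\A{\pi}{P}|\le 1/(1-\gamma)$, and $\sum_{a}\freqS{\pi}{P}(s|s_0)\pi(a|s)\le 1$; this is exactly where the factors $\sqrt A/(1-\gamma)^2$ and $2\gamma A/(1-\gamma)^3$ come from and is a routine (if tedious) computation essentially carried out in~\cite{agarwal2021theory,wang2022convergence}. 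Then, since the gradient and Hessian are continuous on the larger set $\Pi_\delta$ and agree with their values on~$\Pi$ in the limit, a standard argument shows that for $\delta$ small enough the bounds degrade by at most an additive $\delta$: concretely, pick $\delta$ small enough that $\sup_{\Pi_\delta}\|\nabla_\pi \Value{\pi}{P}(s_0)\|_{\mathbf F}\le L+\delta$ and likewise for the Hessian, which is possible by uniform continuity on a slightly larger compact set. Lipschitz continuity with constant $L+\delta$ and $(\ell+\delta)$-smoothness on the convex-ish set $\Pi_\delta$ then follow by the mean value inequality; one should be slightly careful that $\Pi_\delta$ need not be convex, but it is a neighborhood of the convex set $\Pi$ and for $\delta$ small the relevant segments stay inside, or one simply replaces $\Pi_\delta$ by its convex hull which is still within distance $\delta$ of $\Pi$.

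For part~\ref{eq:gd:fix:P}, fix $P\in\mc P$ and let $\pi^\star_P\in\argmin_{\pi'\in\Pi}\Value{\pi'}{P}(s_0)$. The performance difference lemma for a fixed kernel gives $\Value{\pi}{P}(s_0)-\Value{\pi^\star_P}{P}(s_0) = \frac{1}{1-\gamma}\sum_{s}\freqS{\pi^\star_P}{P}(s|s_0)\sum_a(\pi(a|s)-\pi^\star_P(a|s))\Q{\pi}{P}(s,a)$, and since $\Value{\pi}{P}(s_0)$ is the minimum this difference is nonnegative, so each summand contributes nonnegatively after the usual manipulation; bounding $\freqS{\pi^\star_P}{P}(s|s_0)\le C\,\freqS{\pi}{P}(s|s_0)$ via the distribution mismatch coefficient and comparing with the policy gradient expression $\langle \pi-\pi',\nabla_\pi\Value{\pi}{P}(s_0)\rangle = \frac{1}{1-\gamma}\sum_s\freqS{\pi}{P}(s|s_0)\sum_a(\pi(a|s)-\pi'(a|s))\Q{\pi}{P}(s,a)$ shows that taking $\pi'=\pi^\star_P\in\Pi\subseteq\Pi_\delta$ yields $\max_{\pi'\in\Pi_\delta}\langle\pi-\pi',\nabla_\pi\Value{\pi}{P}(s_0)\rangle \ge C^{-1}(\Value{\pi}{P}(s_0)-\min_{\pi'\in\Pi}\Value{\pi'}{P}(s_0))$ for $\pi\in\Pi$. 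To extend this from $\pi\in\Pi$ to $\pi\in\Pi_\delta$, note both sides are continuous in $\pi$ (the right side because $\Value{\cdot}{P}(s_0)$ extends continuously) and the left side is $(L+\delta)$-Lipschitz-controlled near $\Pi$; projecting $\pi\in\Pi_\delta$ onto $\Pi$ and estimating the discrepancy on each side by a quantity going to $0$ with $\delta$ gives the stated additive slack $-\delta$ (after possibly shrinking $\delta$). \textbf{Main obstacle.} The genuinely delicate point is the extension from the compact set $\Pi$ to its open neighborhood $\Pi_\delta$ in both parts: one must be careful that the gradient dominance inequality, which is only natural on the simplex (where $\freqS{\pi}{P}$ is a genuine distribution and $\pi^\star_P$ is a legitimate competitor), survives the perturbation, and that $\Pi_\delta$ can be chosen simultaneously to control Lipschitz/smoothness constants and to make the $\argmax$ over $\Pi_\delta$ no smaller than the $\argmax$ over $\Pi$ — this forces the additive-$\delta$ relaxation rather than an exact inequality, and getting the bookkeeping of which $\delta$ controls which error term right is the crux.
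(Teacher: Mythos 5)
Your plan matches the paper's proof in essence: the paper establishes part (i) by citing \cite[Lemma 3]{wang2022convergence} and the gradient dominance inequality on $\Pi$ by citing \cite[Lemma 4.1]{agarwal2021theory} (the same performance-difference/mismatch-coefficient argument you sketch), and then extends both to the neighborhood $\Pi_\delta$ with the additive $\delta$ slack via continuity of the value function and of $\max_{\pi'\in\Pi}\langle\pi-\pi',\nabla_\pi V_\pi^P(s_0)\rangle$ (Berge's maximum theorem) together with compactness of $\Pi$ and $\mc P$, exactly as in your continuity-plus-uniformity argument. The only cosmetic imprecision is your phrase ``taking $\pi'=\pi^\star_P$'' in part (ii): the mismatch-coefficient step needs the per-state maximization over $\pi'(\cdot|s)$ (so that each state's term is nonnegative) before reweighting by $d_\pi^P$, which you implicitly acknowledge via ``the usual manipulation,'' so this does not affect the validity of the plan.
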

\begin{proof}
By~\cite[Lemma~3.1]{wang2022convergence}, $\Value{\pi}{P}(s_0)$ is $L$-Lipschitz continuous and $\ell$-smooth on~$\Pi$. In addition, $C^{-1}(\Value{\pi}{P}(s_0)-\min_{\pi'\in\Pi}\Value{\pi'}{P}(s_0)) \leq \max _{\pi'\in\Pi}\langle \pi-\pi',\nabla_\pi \Value{\pi}{P}(s_0)\rangle $ for all $\pi\in\Pi$ thanks to~\cite[Lemma 4.1]{agarwal2021theory}.  As $\Value{\pi}{P}(s_0)$ is continuous and rational in~$\pi$ and~$P$ on a neighborhood of $\Pi\times\mc P$ and as~$\Pi$ is compact, Berge's maximum theorem~\cite[pp.~115-116]{berge1997topological} implies that $\max _{\pi'\in\Pi}\langle \pi-\pi',\nabla_\pi \Value{\pi}{P}(s_0)\rangle$ is continuous in~$\pi$ on a neighborhood of~$\Pi$.
The claim then follows because both~$\Pi$ and~$\mc P$ are compact.
\end{proof}
Throughout the rest of this section we use $\Phi(\pi)$ as a shorthand for the worst-case value function $V_\pi^\star(s_0)=\max_{P\in\mc P}V_{\pi}^{P}(s_0)$, which is defined for all $\pi\in\Pi_\delta$. This helps us to avoid clutter.
We henceforth refer to~$\Phi$ as the primal function.
In addition, we let $\pistar\in\argmin_{\pi\in\Pi}\Phi(\pi)$ be an optimal solution of the policy improvement problem~\eqref{def:policy:learning:objective}.  
The primal function~$\Phi$ generically fails to be differentiable. It is thus useful to approximate~$\Phi$ by its Moreau envelope $\Phi_\lambda: \mb R^{S\times A}\to\mb R$ parametrized by~$\lambda > 0$, which is defined through $\Phi_\lambda(\pi)=\min_{\pi'\in\Pi} \Phi(\pi')+\|\pi'-\pi\|^2_{\mathbf F}/(2\lambda).$ 
The following lemma establishes useful properties of the primal function~$\Phi$ and its Moreau envelope $\Phi_\lambda$. 

\begin{lemma}[Properties of the primal function]\label{lemma:primal:moreau}
The following hold.
\begin{enumerate}[label = (\roman*)]
    \item \label{primal:item:primal:weak:convex} 
    $\Phi(\pi)$ is $(\ell+\delta)$-weakly convex and $(L+\delta)$-Lipschitz continuous on~$\Pi_\delta$.
    \item \label{primal:item:small:subgradient}
    If $0<\lambda< 1/(\ell+\delta)$, then $\Phi_{\lambda}(\pi)$ is convex and differentiable. If additionally $\|\nabla\Phi_{\lambda}(\pi)\|_{\mathbf F}\le \epsilon$ for some $\pi\in\Pi_\delta$, then there exists $\hat \pi\in\Pi_\delta$ such that $\|\pi-\hat\pi\|_{\mathbf F}\le \epsilon\lambda$ and $\min_{v\in\partial \Phi(\hat\pi)}\|v\|_{\mathbf F}\le \epsilon.$
\end{enumerate}
\end{lemma}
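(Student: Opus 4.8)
\textbf{Proof proposal for Lemma~\ref{lemma:primal:moreau}.}

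The plan is to treat the two parts separately, with part~\ref{primal:item:primal:weak:convex} serving as the engine for part~\ref{primal:item:small:subgradient}. For part~\ref{primal:item:primal:weak:convex}, I would first recall from Lemma~\ref{lemma:extend:V}\ref{item:smooth:extend:V} that for each fixed $P\in\mc P$ the map $\pi\mapsto\Value{\pi}{P}(s_0)$ is $(\ell+\delta)$-smooth on $\Pi_\delta$, hence $(\ell+\delta)$-weakly convex there (a standard fact: an $\ell$-smooth function has $\ell$-Lipschitz gradient, so $\pi\mapsto\Value{\pi}{P}(s_0)+(\ell+\delta)\|\pi\|_{\mathbf F}^2/2$ has monotone gradient and is therefore convex). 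Since $\Phi(\pi)=\max_{P\in\mc P}\Value{\pi}{P}(s_0)$ is a pointwise supremum of functions that are each $(\ell+\delta)$-weakly convex with the \emph{same} modulus, and adding the common term $(\ell+\delta)\|\pi\|_{\mathbf F}^2/2$ commutes with the supremum, the function $\Phi(\pi)+(\ell+\delta)\|\pi\|_{\mathbf F}^2/2 = \max_{P\in\mc P}\big(\Value{\pi}{P}(s_0)+(\ell+\delta)\|\pi\|_{\mathbf F}^2/2\big)$ is a supremum of convex functions and thus convex; this is exactly $(\ell+\delta)$-weak convexity of $\Phi$ on $\Pi_\delta$. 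Lipschitz continuity with modulus $L+\delta$ follows identically: $\Phi$ is a pointwise maximum of $(L+\delta)$-Lipschitz functions over the compact set $\mc P$, hence $(L+\delta)$-Lipschitz on $\Pi_\delta$.

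For part~\ref{primal:item:small:subgradient}, the strategy is to invoke the standard theory of the Moreau envelope of a weakly convex function. Given $0<\lambda<1/(\ell+\delta)$, the function $\pi'\mapsto\Phi(\pi')+\|\pi'-\pi\|_{\mathbf F}^2/(2\lambda)$ is, by part~\ref{primal:item:primal:weak:convex}, the sum of an $(\ell+\delta)$-weakly convex function and a $(1/\lambda)$-strongly convex quadratic, hence $\big(1/\lambda-(\ell+\delta)\big)$-strongly convex; the unique minimizer is the proximal point, which I would denote $\hat\pi=\mathrm{prox}_{\lambda\Phi}(\pi)$. Strong convexity makes $\Phi_\lambda$ well defined and, by Rockafellar's classical result for the Moreau envelope of a prox-bounded weakly convex function, $\Phi_\lambda$ is convex and continuously differentiable with $\nabla\Phi_\lambda(\pi)=(\pi-\hat\pi)/\lambda$. (Convexity of $\Phi_\lambda$ in this regime is the Davis--Drusvyatskiy observation and can also be seen directly: $\Phi_\lambda(\pi)+\|\pi\|_{\mathbf F}^2/(2\lambda)$ is an infimal convolution manipulation away from being an explicit supremum of convex functions, but citing the standard fact is cleanest.) The gradient identity immediately gives $\|\pi-\hat\pi\|_{\mathbf F}=\lambda\|\nabla\Phi_\lambda(\pi)\|_{\mathbf F}\le\epsilon\lambda$. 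Finally, the first-order optimality condition for the strongly convex proximal subproblem reads $0\in\partial\Phi(\hat\pi)+(\hat\pi-\pi)/\lambda$, i.e.\ $(\pi-\hat\pi)/\lambda\in\partial\Phi(\hat\pi)$, so $\min_{v\in\partial\Phi(\hat\pi)}\|v\|_{\mathbf F}\le\|(\pi-\hat\pi)/\lambda\|_{\mathbf F}=\|\nabla\Phi_\lambda(\pi)\|_{\mathbf F}\le\epsilon$, as claimed.

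The one point requiring genuine care — and what I expect to be the main obstacle — is the interplay between the neighborhood $\Pi_\delta$ and the proximal subproblem: the subdifferential calculus and the optimality condition $(\pi-\hat\pi)/\lambda\in\partial\Phi(\hat\pi)$ are clean only if the minimizer $\hat\pi$ lies in the open set $\Pi_\delta$ where $\Phi$ is weakly convex and where $\partial\Phi$ (in the sense of the paper's definition of the subdifferential of a weakly convex function) is meaningful, so I would either argue that for $\pi\in\Pi_\delta$ and $\lambda$ small the proximal point stays inside $\Pi_\delta$ (using the Lipschitz bound: $\|\hat\pi-\pi\|_{\mathbf F}\le\lambda\|\nabla\Phi_\lambda(\pi)\|_{\mathbf F}$ is small, or more crudely $\|\hat\pi-\pi\|_{\mathbf F}\le 2\lambda(L+\delta)$), or restrict the statement to the relevant sub-neighborhood. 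A secondary subtlety is making the "supremum of weakly convex functions with common modulus is weakly convex" argument airtight when $\mc P$ is compact — here one must ensure the supremum is finite and the modulus is uniform in $P$, both of which follow from Lemma~\ref{lemma:extend:V} since its constants $L,\ell$ do not depend on $P$. Everything else is bookkeeping with the standard Moreau-envelope identities.
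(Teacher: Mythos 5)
Your proof is correct and follows essentially the same route as the paper: for part (i) weak convexity and Lipschitz continuity of the pointwise maximum over the compact set $\mc P$ with moduli uniform in $P$ (the paper simply cites existing lemmas for exactly these two facts), and for part (ii) the strongly convex proximal subproblem, the gradient identity $\nabla\Phi_\lambda(\pi)=(\pi-\hat\pi)/\lambda$, and the optimality condition $(\pi-\hat\pi)/\lambda\in\partial\Phi(\hat\pi)$, which is precisely the paper's self-contained argument (phrased there via Danskin's theorem). The one obstacle you flag, keeping $\hat\pi$ inside $\Pi_\delta$, does not arise because the paper defines the Moreau envelope as $\Phi_\lambda(\pi)=\min_{\pi'\in\Pi}\Phi(\pi')+\|\pi'-\pi\|^2_{\mathbf F}/(2\lambda)$ with the minimization taken over $\Pi\subseteq\Pi_\delta$, so the proximal point automatically lies in $\Pi_\delta$.
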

\begin{proof}
As for Assertion~\ref{primal:item:primal:weak:convex}, note first that $\Phi(\pi)$ is $(L+\delta)$-Lipschitz continuous on~$\Pi_\delta$ thanks to~\cite[Lemma 4.3]{lin2022nonasymptotic}, which applies because of the $(L+\delta)$-Lipschitz continuity of~$\Value{\pi}{P}(s_0)$ established in Lemma~\ref{lemma:extend:V}\ref{item:smooth:extend:V}.
Similarly, \cite[Lemma~3.3]{thekumparampil2019efficient} implies that~$\Phi(\pi)$ inherits $(\ell+\delta)$-weak convexity from the $(\ell+\delta)$-smoothness of $\Value{\pi}{P}(s_0)$ established in Lemma~\ref{lemma:extend:V}\ref{item:smooth:extend:V}. Assertion~\ref{primal:item:small:subgradient} then holds because of \cite[Section~2.2]{davis2019stochastic}. 
We include a short proof to keep this paper self-contained. For ease of notation we set $f_{\pi}(\pi')=\Phi(\pi')+\|\pi'-\pi\|^2_{\mathbf F}/(2\lambda)$. Note first that $f_{\pi}(\pi')$ is strongly convex in~$\pi'$ because $\Phi(\pi')$ is $(\ell+\delta)$-weakly convex and because $0<\lambda<1/(\ell+\delta)$. Danskin's theorem~\cite[Proposition~B.25]{bertsekas2016nonlinear} thus implies that, for any $\pi\in\Pi_\delta$,  $\Phi_\lambda(\pi)=\min_{\pi'\in\Pi} f_\pi(\pi')$ is convex and differentiable with $\nabla \Phi_\lambda(\pi)= \nabla_\pi f_{\pi}(\hat\pi)=(\pi-\hat{\pi})/\lambda$, where $\hat{\pi}$ is the unique minimizer of~$f_{\pi}(\pi')$ across all~$\pi'\in\Pi$. This implies that if  $\|\nabla\Phi_{\lambda}(\pi)\|_{\mathbf F}\le \epsilon$, then $\|\hat{\pi}-\pi\|_{\mathbf F} \le\epsilon\lambda$.
It suffices to show that $\hat{\pi}=\argmin_{\pi'\in\Pi} f(\pi')$ satisfies $\min _{\xi \in \partial \Phi(\hat{\pi})}\|\xi\|_{\mathbf F} \leq \epsilon$ and $\|\pi-\hat{\pi}\|_{\mathbf F} \leq \epsilon\lambda$. Therefore, the optimal solution of $\min_{\pi'\in\Pi} f(\pi')$ is unique, and Danskin's theorem~\cite[Proposition~B.25]{bertsekas2016nonlinear} implies that $\nabla \Phi_\lambda(\pi)=$ $\nabla_\pi f_\pi(\hat\pi)=(\pi-\hat{\pi})/\lambda$. Taking the norm of the previous equality yields $\|\hat{\pi}-\pi\|_{\mathbf F}=\|\nabla \Phi_\lambda(\pi)\|_{\mathbf F} \lambda\le\epsilon\lambda$, where the inequality follows from the premise that $\|\nabla \Phi_\lambda(\pi)\|_{\mathbf F}\le \epsilon$. 
On the other hand, the optimality of $\hat{\pi}$ implies that $0\in \partial_{\pi'} f_\pi(\pi')|_{\pi'=\hat\pi}$, which is equivalent to $(\pi-\hat\pi)/\lambda\in\partial \Phi(\hat\pi)$. Hence, it follows that $\min _{v \in \partial \Phi(\hat{\pi})}\|v\|_{\mathbf F} \leq \|(\pi-\hat\pi)/\lambda\|_{\mathbf F}\le \epsilon$.
\end{proof}

Lemma~\ref{lemma:primal:moreau}\ref{primal:item:small:subgradient} asserts that
if $0<\lambda<1/\ell$, then
the $\lambda\epsilon$-neighborhood of any approximate stationary point of the Moreau envelope $\Phi_\lambda$ contains an approximate stationary point of~$\Phi$. Thus, approximate stationary points of $\Phi$ can be found by searching for approximate stationary points of $\Phi_\lambda.$ 

\begin{lemma}[Stationarity guarantee{~\cite[Theorem~3.3]{wang2022convergence}}]\label{lemma:gdmax:converge} 
The iterates $\!\{\pik\}_{k=0}^{K-1}\!$ of Algorithm~\ref{alg:PG-min-oracle} satisfy \[\sum_{k=0}^{K-1}\!\left\|\nabla \Phi_{1/(2\ell)}(\pik)\right\|_{\mathbf F}^2\leq \sqrt{\frac{4 \ell S}{\eta}+2 K \eta \ell L^2+4 \ell \epsilon K}.
\]
\end{lemma}
Lemma~\ref{lemma:gdmax:converge} guarantees that 
if $\epsilon=L\sqrt{2S/K}/2$ and $\eta=\sqrt{2S/K}/L$, then
the iterates $\{\pik\}_{k=0}^{K}$ generated by Algorithm~\ref{alg:PG-min-oracle} satisfy 
\[\!\min_{k=0,\ldots,K-1}\!\left\|\nabla \Phi_{1/(2\ell)}(\pik)\!\right\|_{\mathbf F}\!\!=\!\left(\min_{k=0,\ldots,K-1}\!\left\|\nabla \Phi_{1/(2\ell)}(\pik)\!\right\|^2_{\mathbf F}\right)^{\frac{1}{2}}\!\!\le\! \big(6\ell L \sqrt{2SK}\big)^{\frac{1}{2}}\!\!=\!\mc O\big(\!K^{-\frac{1}{4}}\!\big).\]

The following lemma, which is inspired by~\cite[Lemma 12]{daskalakis2020independent},
 establishes a fundamental inequality that can be used to convert an approximate stationary point of the Moreau envelope $\Phi_{1/(2\ell)}$ to an approximate minimizer of $\Phi.$
\begin{lemma}[Gradient dominance property of $\Phi$]\label{lemma:gd:primal}
If Assumption~\ref{assume:irreducible:pi} holds, then we have  $\Phi(\pi)-\Phi\left(\pi^\star\right) \leq (C\sqrt{2S}+L/(2\ell)) \|\nabla \Phi_{1 / (2 \ell)}(\pi)\|_{\mathbf F}$ for all $\pi\in\Pi$.
\end{lemma}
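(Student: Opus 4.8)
The strategy is to bound the suboptimality gap $\Phi(\pi)-\Phi(\pi^\star)$ by a triangle-inequality argument that passes through the proximal point $\hat\pi$ of $\pi$ with respect to $\Phi$ and the Moreau parameter $\lambda = 1/(2\ell)$. Concretely, let $\hat\pi = \argmin_{\pi'\in\Pi}\,\Phi(\pi')+\ell\|\pi'-\pi\|_{\mathbf F}^2$, so that $\nabla\Phi_{1/(2\ell)}(\pi) = 2\ell(\pi-\hat\pi)$ and hence $\|\pi-\hat\pi\|_{\mathbf F} = \|\nabla\Phi_{1/(2\ell)}(\pi)\|_{\mathbf F}/(2\ell)$, as extracted from Lemma~\ref{lemma:primal:moreau}\ref{primal:item:small:subgradient}. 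I would then write
$$\Phi(\pi)-\Phi(\pi^\star) = \big(\Phi(\pi)-\Phi(\hat\pi)\big) + \big(\Phi(\hat\pi)-\Phi(\pi^\star)\big)$$
and bound the two terms separately.

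**Bounding the first term.** Since $\Phi$ is $(L+\delta)$-Lipschitz on $\Pi_\delta$ (Lemma~\ref{lemma:primal:moreau}\ref{primal:item:primal:weak:convex}), and taking $\delta\downarrow 0$ (so effectively $L$-Lipschitz on $\Pi$; one may also simply keep $\delta$ and let it vanish at the end), we get
$$\Phi(\pi)-\Phi(\hat\pi) \le L\,\|\pi-\hat\pi\|_{\mathbf F} = \frac{L}{2\ell}\,\|\nabla\Phi_{1/(2\ell)}(\pi)\|_{\mathbf F}.$$

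**Bounding the second term — the main obstacle.** This is the crux: I must show $\Phi(\hat\pi)-\Phi(\pi^\star)\le C\sqrt{2S}\,\|\nabla\Phi_{1/(2\ell)}(\pi)\|_{\mathbf F}$. The idea is to exploit the gradient-dominance inequality for $\Value{\pi}{P}(s_0)$ in $\pi$ from Lemma~\ref{lemma:extend:V}\ref{eq:gd:fix:P}, applied at the point $\hat\pi$ with $P$ chosen to be a worst-case kernel $\hat P\in\argmax_{P\in\mc P}\Value{\hat\pi}{P}(s_0)$, so $\Phi(\hat\pi) = \Value{\hat\pi}{\hat P}(s_0)$. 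That inequality gives (again with $\delta\downarrow 0$)
$$\Phi(\hat\pi)-\min_{\pi'\in\Pi}\Value{\pi'}{\hat P}(s_0) \le C\max_{\pi'\in\Pi}\langle \hat\pi-\pi', \nabla_\pi\Value{\hat\pi}{\hat P}(s_0)\rangle.$$
Now $\min_{\pi'\in\Pi}\Value{\pi'}{\hat P}(s_0)\le \Value{\pi^\star}{\hat P}(s_0)\le \Value{\pi^\star}{\star}(s_0)=\Phi(\pi^\star)$, so the left side dominates $\Phi(\hat\pi)-\Phi(\pi^\star)$. It remains to control the right side: I claim $\nabla_\pi\Value{\hat\pi}{\hat P}(s_0)$ is (up to the rescaling) a subgradient of $\Phi$ at $\hat\pi$ — this is the standard envelope/Danskin fact that $\nabla_\pi\Value{\hat\pi}{\hat P}(s_0)\in\partial\Phi(\hat\pi)$ when $\hat P$ is a maximizer (valid because $\Value{\pi}{P}(s_0)$ is smooth in $\pi$ and $\Phi$ is a max of such functions over the compact set $\mc P$, cf. the weak-convexity discussion preceding Lemma~\ref{lemma:primal:moreau}). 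By the optimality characterization of the proximal point, $\min_{v\in\partial\Phi(\hat\pi)}\|v\|_{\mathbf F}\le \|\pi-\hat\pi\|_{\mathbf F}/\lambda = \|\nabla\Phi_{1/(2\ell)}(\pi)\|_{\mathbf F}$, but here I need the specific subgradient $\nabla_\pi\Value{\hat\pi}{\hat P}(s_0)$ to be small, which requires a bit more care — this is where I'd work slightly harder, arguing that the proximal-point optimality condition $(\pi-\hat\pi)/\lambda\in\partial\Phi(\hat\pi)$ together with the representation of $\partial\Phi(\hat\pi)$ as the convex hull of the $\nabla_\pi\Value{\hat\pi}{P}(s_0)$ over worst-case kernels $P$ lets me select $\hat P$ achieving $\|\nabla_\pi\Value{\hat\pi}{\hat P}(s_0)\|_{\mathbf F}\le \|\nabla\Phi_{1/(2\ell)}(\pi)\|_{\mathbf F}$. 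Finally $\max_{\pi'\in\Pi}\langle\hat\pi-\pi',\nabla_\pi\Value{\hat\pi}{\hat P}(s_0)\rangle\le \mathrm{diam}(\Pi)\cdot\|\nabla_\pi\Value{\hat\pi}{\hat P}(s_0)\|_{\mathbf F}$, and since $\Pi=\Delta(\mc A)^S$ has Frobenius diameter $\sqrt{2S}$, we obtain $\Phi(\hat\pi)-\Phi(\pi^\star)\le C\sqrt{2S}\,\|\nabla\Phi_{1/(2\ell)}(\pi)\|_{\mathbf F}$. Adding the two bounds and letting $\delta\downarrow 0$ yields the claim. The delicate point is the interplay between the subgradient selection and the proximal-point optimality condition; everything else is Lipschitz bookkeeping and the diameter computation.
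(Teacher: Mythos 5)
Your proposal is correct and follows essentially the same route as the paper's proof: the same decomposition through the proximal point $\hat\pi$ of the Moreau envelope (via Lemma~\ref{lemma:primal:moreau}\ref{primal:item:small:subgradient}), the same Lipschitz bound for $\Phi(\pi)-\Phi(\hat\pi)$, and the same use of Lemma~\ref{lemma:extend:V}\ref{eq:gd:fix:P} at $\hat\pi$ with a worst-case kernel $\hat P$ selected through the Danskin representation of $\partial\Phi(\hat\pi)$ (Theorem~\ref{thm:danskin}), together with the diameter bound $\sqrt{2S}$, yielding exactly the constant $C\sqrt{2S}+L/(2\ell)$. The only differences are bookkeeping: the paper replaces your informal ``$\delta\downarrow 0$'' with an explicit argument on the $\delta$-neighborhood $\Pi_\delta$, a density-of-differentiable-points step, and a limiting sequence $\pi_t\to\pi$ with $\delta_t\to 0$, while the delicate selection of $\hat P$ with $\|\nabla_\pi \Value{\hat\pi}{\hat P}(s_0)\|_{\mathbf F}\le\epsilon$ that you flag is asserted in the paper with the same brevity as in your sketch.
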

\begin{proof}[Proof of Lemma~\ref{lemma:gd:primal}]
Choose any $\delta>0$. By Lemma~\ref{lemma:primal:moreau}\ref{primal:item:primal:weak:convex},~$\Phi$ is $(\ell+\delta)$-weakly convex on~$\Pi_\delta$. Theorem~25.5 in~\cite{rockafellar1970convex} then implies that the set of points at which $\Phi$ is differentiable is dense in $\mathrm{int}\Pi_\delta$ and hence in $\Pi_\delta$. We
first prove that the claimed inequality holds {\em approximately} for any point $\pi\in\Pi_\delta$ at which $\Phi$ is differentiable.  
In this case the subdifferential $\partial \Phi(\pi)=$ $\{\nabla \Phi(\pi)\}$ is a singleton, and a generalization of Danskin's theorem (Theorem~\ref{thm:danskin}) implies that $\nabla\Phi(\pi)=\nabla_\pi \Value{\pi}{P^\star}(s_0)$ for any $P^\star\in\argmax_{P\in\mc P}\Value{\pi}{P}(s_0)$. As Lemma~\ref{lemma:extend:V}\ref{eq:gd:fix:P} holds in particular for~$P=P^\star$, we have
\vspace{-5pt}
\begin{equation}\label{eq:KL:phi}
\begin{split}
\max_{\pi'\in\Pi_\delta}\langle \pi-\pi', \nabla_\pi  \Value{\pi}{P^\star}(s_0)\rangle
&\geq C^{-1}\left(\Value{\pi}{P^\star}(s_0)-\min_{\pi'\in\Pi}\Value{\pi'}{P^\star}(s_0)\right) -\delta 
\\&\ge  C^{-1}(\Phi(\pi)-\Phi(\pistar))-\delta,
\end{split}
\vspace{-5pt}
\end{equation}
where the second inequality holds because
$
\min_{\pi\in\Pi}\Value{\pi}{P^\star}(s_0)\le\min_{\pi\in\Pi}\max_{P\in\mc P}\Value{\pi}{P}(s_0)$ $=\Phi(\pistar).
$
The Cauchy-Schwarz inequality then allows us to conclude that
\vspace{-5pt}
\begin{align*}
\frac{\sqrt{2S}+2\delta}{\sqrt{2S}+2\delta}\|\nabla\Phi(\pi)\|_{\mathbf F}
&\ge \frac{1}{\sqrt{2S}+2\delta} \max_{\pi'\in\Pi_\delta,\|\pi'-\pi\|_{\mathbf F}\le \sqrt{2S}+2\delta}\langle \pi-\pi', \nabla\Phi(\pi)\rangle
\\&= \frac{1}{\sqrt{2S}+2\delta} \max_{\pi'\in\Pi_\delta}\langle \pi-\pi', \nabla_\pi  \Value{\pi}{P^\star}(s_0)\rangle
\\&\ge \frac{1}{C(\sqrt{2S}+2\delta)}(\Phi(\pi)-\Phi(\pistar))-\frac{\delta}{\sqrt{2S}+2\delta}.
\end{align*}
The equality in the above expression holds because there exist $\pi'_\delta,\pi_\delta\in\Pi$ with $\|\pi-\pi_\delta\|_{\mathbf F}\le~\delta$ and $\|\pi'-\pi'_\delta\|_{\mathbf F}\le~\delta$ and because $\|\pi'_\delta-\pi_\delta\|_{\mathbf F}\le\sqrt{2S}$ thanks to Lemma~\ref{frob:bound:pi}. This implies that $\|\pi'-\pi\|_{\mathbf F}\le\|\pi'-\pi'_\delta\|_{\mathbf F}+\|\pi'_\delta-\pi_\delta\|_{\mathbf F}+\|\pi_\delta-\pi\|_{\mathbf F}\le\sqrt{2S}+2\delta$. The second inequality follows from~\eqref{eq:KL:phi}.
Next, set $\epsilon\! =\!\|\nabla \Phi_{1/(2\ell+2\delta)}(\pi)\|_{\mathbf F} $. By Lemma~\ref{lemma:primal:moreau}\ref{primal:item:small:subgradient}, there is $\hat\pi\in\Pi_\delta$ such that $\|\pi-\hat\pi\|_{\mathbf F}\le \epsilon/(2\ell+2\delta)$ and $\min_{v\in\partial\Phi(\hat\pi)}\|v\|_{\mathbf F}\le\epsilon.$ Theorem~\ref{thm:danskin} thus implies that there exists $\hat P\in\argmax_{P\in\mc P}\Value{\hat\pi}{P}(s_0)$ with $\|\nabla_{\pi}\Value{\hat\pi}{\hat P}(s_0)\|_{\mathbf F}\le \epsilon$. We then find
\begin{equation}\label{eq:hat:pi:ineq}
  \begin{split}
      \epsilon\ge \|\nabla_{\pi}\Value{\hat\pi}{\hat P}(s_0)\|_{\mathbf F}&\ge 
    \frac{1}{\sqrt{2S}+2\delta}\max_{\pi'\in\Pi_\delta}\langle \hat\pi-\pi', \nabla_\pi  \Value{\hat\pi}{\hat P}(s_0)\rangle
    \\&\geq \frac{1}{C(\sqrt{2S}+2\delta)}(\Value{\hat\pi}{\hat P}(s_0)-\min_{\pi\in\Pi}\Value{\pi}{\hat P}(s_0)) -\frac{\delta}{\sqrt{2S}+2\delta} 
    \\&\ge  \frac{1}{C(\sqrt{2S}+2\delta)}(\Phi(\hat\pi)-\Phi(\pistar))-\frac{\delta}{\sqrt{2S}+2\delta},
  \end{split}  
\end{equation}
where the second inequality follows from the Cauchy-Schwarz inequality and our earlier insight that $\|\hat\pi-\pi'\|_{\mathbf F}\le \sqrt{2S}+2\delta$ for all $\hat\pi,\pi'\in\Pi_\delta$, the third inequality follows from Lemma~\ref{lemma:extend:V}\ref{eq:gd:fix:P}, and the fourth inequality holds because $\Value{\hat\pi}{\hat P}(s_0)=\Phi(\hat\pi)$ and
$\min_{\pi\in\Pi}\Value{\pi}{\hat P}(s_0)\le\min_{\pi\in\Pi}\max_{P\in\mc P}\Value{\pi}{P}(s_0)=\Phi(\pistar).
$
The above reasoning implies that
\begin{equation*}
\begin{split}
    \Phi(\pi)-\Phi(\pistar)&=\Phi(\hat\pi)-\Phi(\pistar)+\Phi(\pi)-\Phi(\hat\pi) \\&\le C(\epsilon(\sqrt{2S}+2\delta)+\delta)+ (L+\delta)\|\hat\pi-\pi\|_{\mathbf F} \\&\le  \left(C(\sqrt{2S}+2\delta)+\frac{L+\delta}{2(\ell+\delta)}\right)\epsilon+ C
    \delta, 
\end{split}
\end{equation*}
where the first inequality follows from~\eqref{eq:hat:pi:ineq} and Lemma~\ref{lemma:primal:moreau}\ref{primal:item:primal:weak:convex},
and the second inequality holds because $\|\hat\pi-\pi\|_{\mathbf F}\le \epsilon/(2\ell+2\delta)$. As $\epsilon\! =\!\|\nabla \Phi_{1/(2\ell+2\delta)}(\pi)\|_{\mathbf F}$, we thus have 
\begin{align}\label{ineq:phi:delta:nondiff}
 \Phi(\pi)-\Phi(\pi^\star)\le  
 \left(C(\sqrt{2S}+2\delta)+\frac{L+\delta}{2(\ell+\delta)}\right)\|\nabla \Phi_{1/(2\ell+2\delta)}(\pi)\|_{\mathbf F}+C\delta.
\end{align}
Hence, if $\delta$ is small, the claimed gradient dominance condition holds {\em approximately} at any point $\pi\in\Pi_\delta$ where the primal function~$\Phi$ is differentiable.


Consider now an arbitrary $\pi\in\Pi$ irrespective of whether or not $\Phi$ is differentiable at~$\pi$. Let $\{\pi_t\}_{t=0}^\infty$ be a sequence in an open neighborhood of $\Pi$ converging to~$\pi$ such that $\Phi$ is differentiable at~$\pi_t$ and $\pi_t\in\Pi_{\delta_t}$ with $\delta_t=1/t$ for every $t\in\mathbb N$. From the inequality~\eqref{ineq:phi:delta:nondiff} established in the first part of the proof we know that 
\begin{align*}
 \Phi(\pi_t)-\Phi(\pi^\star)\le  
 \left(C(\sqrt{2S}+2\delta_t)+\frac{L+\delta_t}{2(\ell+\delta_t)}\right)\|\nabla \Phi_{1/(2\ell+2\delta_t)}(\pi_t)\|_{\mathbf F}+C\delta_t\quad \forall t\in\mathbb N.
\end{align*}
The claim then follows because $\pi_t$ converges to $\pi$ and $\delta_t$ converges to $0$, while~$\Phi$ as well as the gradient $\nabla \Phi_{1/(2\ell+2\delta)}$ of its Moreau envelope are continuous at~$\pi$.
\end{proof}
With all these preparatory results at hand, we are now ready to characterize the convergence behavior of Algorithm~\ref{alg:PG-min-oracle}.

\begin{theorem}[Convergence of Algorithm~\ref{alg:PG-min-oracle}]\label{thm:PO:convergence}
If Assumption~\ref{assume:irreducible:pi} holds, $\epsilon=L\sqrt{2S/K}/2$ and $\eta=\sqrt{2S/K}/L$, then the iterates $\{\pik\}_{k=0}^{K-1}$ of Algorithm~\ref{alg:PG-min-oracle} satisfy
  \begin{align*}
    \frac{1}{K}\sum_{k=0}^{K-1}\left( \Value{\pik}{\star}(s_0)-\min_{\pi\in\Pi}\Value{\pi}{\star}(s_0)\right)\le  \frac{(72S)^{1/4}(C\sqrt{2\ell L S}+L\sqrt{L/\ell}/2)}{K^{1/4}}. 
  \end{align*}
\end{theorem}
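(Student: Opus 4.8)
\emph{Proof proposal.} The strategy is to chain together the two workhorse lemmas of this section, Lemma~\ref{lemma:gd:primal} (gradient dominance of $\Phi$) and Lemma~\ref{lemma:gdmax:converge} (stationarity of the iterates), via a single Cauchy--Schwarz step. The first thing to record is that, in the notation of this section, $\Phi(\pik)=\Value{\pik}{\star}(s_0)$ for every $k$ while $\Phi(\pistar)=\min_{\pi\in\Pi}\Value{\pi}{\star}(s_0)$ by the choice of $\pistar$, so the left-hand side of the claimed bound is exactly $\frac1K\sum_{k=0}^{K-1}\big(\Phi(\pik)-\Phi(\pistar)\big)$; moreover every iterate $\pik$ produced by Algorithm~\ref{alg:PG-min-oracle} lies in $\Pi$, so Lemma~\ref{lemma:gd:primal} applies to each of them.

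First I would apply Lemma~\ref{lemma:gd:primal} to each iterate and average, obtaining
\[
\frac1K\sum_{k=0}^{K-1}\big(\Phi(\pik)-\Phi(\pistar)\big)\le\Big(C\sqrt{2S}+\frac{L}{2\ell}\Big)\cdot\frac1K\sum_{k=0}^{K-1}\big\|\nabla\Phi_{1/(2\ell)}(\pik)\big\|_{\mathbf F}.
\]
Next, by the Cauchy--Schwarz inequality $\frac1K\sum_{k=0}^{K-1}\|\nabla\Phi_{1/(2\ell)}(\pik)\|_{\mathbf F}\le\big(\frac1K\sum_{k=0}^{K-1}\|\nabla\Phi_{1/(2\ell)}(\pik)\|_{\mathbf F}^2\big)^{1/2}$, and then I would invoke Lemma~\ref{lemma:gdmax:converge}, which under the stated parameter choices $\epsilon=L\sqrt{2S/K}/2$ and $\eta=\sqrt{2S/K}/L$ bounds $\sum_{k=0}^{K-1}\|\nabla\Phi_{1/(2\ell)}(\pik)\|_{\mathbf F}^2$ by $6\ell L\sqrt{2SK}$. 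Substituting gives $\frac1K\sum_{k=0}^{K-1}\|\nabla\Phi_{1/(2\ell)}(\pik)\|_{\mathbf F}\le\big(6\ell L\sqrt{2SK}/K\big)^{1/2}=\sqrt{6\ell L}\,(2S)^{1/4}K^{-1/4}$, so altogether
\[
\frac1K\sum_{k=0}^{K-1}\big(\Value{\pik}{\star}(s_0)-\min_{\pi\in\Pi}\Value{\pi}{\star}(s_0)\big)\le\Big(C\sqrt{2S}+\frac{L}{2\ell}\Big)\sqrt{6\ell L}\,(2S)^{1/4}K^{-1/4}.
\]

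It then remains to check that this right-hand side equals $(72S)^{1/4}(C\sqrt{2\ell L S}+L\sqrt{L/\ell}/2)K^{-1/4}$, which is a matter of distributing $\sqrt{6\ell L}\,(2S)^{1/4}$ over the two summands and collecting the powers of $2$, $3$, $S$, $\ell$ and $L$ — using $6^{1/2}2^{1/4}=2^{3/4}3^{1/2}=72^{1/4}$ for the $L\sqrt{L/\ell}/2$ term and $6^{1/2}2^{3/4}=2^{5/4}3^{1/2}=2^{1/2}\cdot72^{1/4}$ for the $C\sqrt{2\ell L S}$ term — which recovers precisely the stated constant. Since the substantive estimates — the gradient dominance inequality of Lemma~\ref{lemma:gd:primal} and the $\mc O(\sqrt{K})$ bound of Lemma~\ref{lemma:gdmax:converge} on the sum of squared Moreau gradient norms — are already in place, I expect the only delicate point to be this bookkeeping of exponents, ensuring that the final constant matches the claimed closed form rather than merely having the right asymptotic order $\mc O(K^{-1/4})$.
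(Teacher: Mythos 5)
Your proposal is correct and follows essentially the same route as the paper's proof: apply the gradient dominance of Lemma~\ref{lemma:gd:primal} to each iterate, pass from the average of Moreau-gradient norms to the root-mean-square (the paper phrases this via Jensen's inequality, you via Cauchy--Schwarz, which is the same step here), and then invoke Lemma~\ref{lemma:gdmax:converge}; your constant bookkeeping, including $6^{1/2}2^{1/4}=72^{1/4}$, matches the stated bound.
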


\begin{proof}[Proof of Theorem~\ref{thm:PO:convergence}]
We have 
\begin{align*}
\frac{1}{K}\sum_{k=0}^{K-1}\left( \Value{\pik}{\star}(s_0)-\min_{\pi\in\Pi}\Value{\pi}{\star}(s_0)\right)
&=\frac{1}{K}\sum_{k=0}^{K-1}\left( \Phi(\pik)-\min_{\pi\in\Pi}\Phi(\pi)\right)
\\
&\le \frac{(C\sqrt{2S}+L/(2\ell))}{K}\sum_{k=0}^{K-1}\|\nabla \Phi_{1 / (2 \ell)}(\pik)\|_{\mathbf F}
\\&\le\frac{(C\sqrt{2S}+L/(2\ell))}{\sqrt{K}} \sqrt{\sum_{k=0}^{K-1}\|\nabla \Phi_{1 / (2 \ell)}(\pik)\|_{\mathbf F}^2}
\\&\le \frac{(C\sqrt{2S}+L/(2\ell))(72S)^{1/4}(\ell L)^{1/2}}{K^{1/4}},
\end{align*}
where the equality exploits the definition of the primal function~$\Phi$, while the three inequalities follow from Lemma~\ref{lemma:gd:primal}, Jensen's inequality and Lemma~\ref{lemma:gdmax:converge} with $\epsilon=L\sqrt{2S/K}/2$ and $\eta=\sqrt{2S/K}/L$, respectively.
\end{proof}

Theorem~\ref{thm:PO:convergence} implies that an $\varepsilon$-optimal solution of the robust policy improvement problem~\eqref{def:policy:learning:objective} can be computed in $K=\mc O(1/\varepsilon^4)$ iterations provided that the robust policy evaluation oracle is guaranteed to output an $\epsilon$-optimal solutions with $\epsilon=\mc O(\varepsilon^2)$. Arbitrarily accurate solutions of the robust policy improvement problem~\eqref{def:policy:learning:objective} can thus only be obtained if the error~$\epsilon$ of the robust policy evaluation oracle can be made arbitrarily small. If the robust policy evaluation oracle has a fixed accuracy~$\epsilon$, however, then the robust policy improvement problem~\eqref{def:policy:learning:objective} can only be solved up to a certain accuracy. The following corollary of Theorem~\ref{thm:PO:convergence} formalizes this statement.

\begin{corollary}[Performance of Algorithm~\ref{alg:PG-min-oracle} under constant policy evaluation error] \label{cor:performance:of:algo}
Suppose that Assumption~\ref{assume:irreducible:pi} holds and that the robust policy evaluation oracle (called in step 3 of Algorithm~\ref{alg:PG-min-oracle}) is solved to a constant error $\epsilon\geq 0$. If the learning rate is $\eta=\sqrt{2S/K}/L$, then the iterates $\{\pik\}_{k=0}^{K-1}$ of Algorithm~\ref{alg:PG-min-oracle} satisfy
\begin{equation*}
    \frac{1}{K}\sum_{k=0}^{K-1}\left( \Value{\pik}{\star}(s_0)-\min_{\pi\in\Pi}\Value{\pi}{\star}(s_0)\right)\le  C_1 K^{-1/4} + C_2 \sqrt{\epsilon},
\end{equation*}
where $C_1=(32S)^{1/4}(C\sqrt{2S}+L/(2\ell))\sqrt{\ell L}$ and $C_2=(C\sqrt{2S}+L/(2\ell)) \sqrt{4\ell}$.
\end{corollary}
\begin{proof}[Proof of Corollary~\ref{cor:performance:of:algo}]
By using a similar reasoning as in the proof of Theorem 4.5, we obtain
\begin{align*}
        &\frac{1}{K}\sum_{k=0}^{K-1}\left( \Value{\pik}{\star}(s_0)-\min_{\pi\in\Pi}\Value{\pi}{\star}(s_0)\right) \\
        &\qquad\le\frac{(C\sqrt{2S}+L/(2\ell))}{\sqrt{K}} \sqrt{\sum_{k=0}^{K-1}\|\nabla \Phi_{1 / (2 \ell)}(\pik)\|_{\mathbf F}^2}
\\&\qquad\le \frac{(C\sqrt{2S}+L/(2\ell))}{\sqrt{K}} \sqrt{\frac{4 \ell S}{\eta}+2 K \eta \ell L^2+4 \ell \epsilon K}
\\&\qquad\le \frac{(C\sqrt{2S}+L/(2\ell))}{\sqrt{K}} \left(\sqrt{\frac{4 \ell S}{\eta} + 2 K \eta \ell L^2}+ \sqrt{4 \ell \epsilon K}\right) 
\\&\qquad= \frac{(C\sqrt{2S}+L/(2\ell))}{\sqrt{K}} \left(\sqrt{4\sqrt{2KS}\ell L}+ \sqrt{4 \ell \epsilon K}\right) 
\\&\qquad= \frac{(32S)^{1/4}(C\sqrt{2S}+L/(2\ell))\sqrt{\ell L}}{K^{1/4}}  + (C\sqrt{2S}+L/(2\ell)) \sqrt{4\ell \epsilon} .
\end{align*}
The first inequality follows from the definition of~$\Phi$, from Lemma~\ref{lemma:gd:primal} and from Jensen's inequality. The second inequality uses Lemma~\ref{lemma:gdmax:converge}, and the third inequality holds because $\sqrt{x+y}< \sqrt{x}+\sqrt{y}$ for all $x,y>0.$ The first equality, finally, follows from the choice of the learning rate $\eta$.
\end{proof}
A similar global convergence result for a projected gradient descent algorithm with access to an approximate robust policy evaluation oracle was established in~\cite{wang2022convergence}. However, no robust policy evaluation oracle for general non-rectangular uncertainty sets is described, and its accuracy is required to increase geometrically with the number of iterations of the algorithm. 
Also, the proof strategy leading up to the gradient dominance condition is methodologically different from ours. In \cite{wang2022convergence} any subgradient of the primal function must be expressed as a convex combination of gradients of the value function evaluated at a finite number of worst-case kernels. In contrast, we approximate the subgradients at non-differentiable points by sequences of gradients at nearby differentiable points. This alternative proof technique is arguably more transparent and more directly reveals the gradient dominance property of the Moreau envelope of the primal function.

\vspace{-5pt}
\section{Numerical Experiments}
\label{sect:exp}

We assess the performance of the proposed algorithms on standard test problems: A stochastic GridWorld problem~\cite{sutton2018reinforcement}, randomly generated Garnet MDPs~\cite{archibald1995generation}, and a machine replacement problem~\cite{delage2010percentile}. 
Sections~\ref{ssec:rect:non:amb:sets} and~\ref{ssec:rec:amb} focus on robust policy evaluation. Section~\ref{ssec:rect:non:amb:sets} first compares the solution qualities of the projected Langevin dynamics algorithm (PLD, Algorithm~\ref{alg:PLD}) and the Frank-Wolfe algorithm (FW, Algorithm~\ref{alg:CPI}) in the context of a GridWorld problem. 
Section~\ref{ssec:rec:amb} uses Garnet MDPs to assess the runtime performance of FW against that of the state-of-the-art projected gradient descent algorithm for robust policy evaluation described in~\cite{wang2022convergence}. Section~\ref{ssec:non:rec:amb}, finally, focuses on a machine replacement problem and compares the actor-critic algorithm (ACA, Algorithm~\ref{alg:PG-min-oracle}) against the only existing method for robust policy improvement with non-rectangular uncertainty sets described in~\cite{wiesemann2013robust}. 
All experiments are implemented in Python, and are run on an Intel~i7-10700 CPU~(2.9GHz) computer with 16
GB~RAM.
\vspace{-5pt}

\subsection{Stochastic GridWorld: Rectangular and Non-Rectangular Uncertainty Sets}
\label{ssec:rect:non:amb:sets}
The purpose of the first experiment is to show that PLD outputs the same policy value as FW when the uncertainty set is rectangular but may output a higher policy value than FW otherwise. Our experiment is based on a stylized GridWorld problem, which is widely studied in reinforcement learning~\cite{sutton2018reinforcement}. Specifically, the state space $\mc S$ comprises the $25$ cells of a $5\times 5$ grid, and the action space~$\mc A$ comprises the~$4$ directions ``up,'' ``down,'' ``left,'' and ``right.'' An agent moves across the grid with the aim to reach the Goal State in cell~$1$ (in the top left corner) while avoiding the Bad State in cell~$25$ (in the bottom right corner). If the agent resides in cell $s\in\mc S$ and selects action $a\in\mc A$, then she moves to cell~$s'\in\mc S$ with probability $P(s'|s,a)$. The agent incurs a cost of~$10$ in the Bad State, a cost of~$0$ in the Goal State, and a cost of~$0.2$ in any other state. The initial state~$s_0$ is assumed to follow the uniform distribution on~$\mc S$, which we denote as~$\rho$, and the discount factor is set to~$\gamma=0.9$. We also assume that the agent's knowledge is captured by an uncertainty set~$\mc P$, which is defined as some neighborhood of a reference transition kernel~$P_{\rm ref}$. In the following we define~$\mc S(s)\subseteq \mc 
S$ as the set of all cells adjacent to~$s$. We set $P_{\rm ref}(s'|s,a)=0.7$ if~$s'$ is the cell adjacent to~$s$ in direction~$a$, $P_{\rm ref}(s'|s,a)=0.1$ if~$s'$ is any other cell adjacent to~$s$, $P_{\rm ref}(s'|s,a)=1-\sum_{s''\in \mc S(s)}P_{\rm ref}(s''|s,a)$ if $s'=s$, and $P_{\rm ref}(s'|s,a)=0$ otherwise. If there is no cell adjacent to~$s$ in direction~$a$, then we set $P_{\rm ref}(s'|s,a)=0.1$ if~$s'$ is any cell adjacent to~$s$, $P_{\rm ref}(s'|s,a)=1-\sum_{s''\in \mc S(s)}P_{\rm ref}(s''|s,a)$ if $s'=s$, and $P_{\rm ref}(s'|s,a)=0$ otherwise. 
Our goal is to compute the worst-case net present cost $\Value{\pi}{\star}(\rho)=\max_{P\in\mc P} \sum_{s_0\in\mc S}\rho(s_0) \Value{\pi}{P}(s_0)$ of the policy $\pi\in\Pi$ that selects actions randomly from the uniform distribution on~$\mc A$ irrespective of the current state. 

Gradient-based methods such as PLD or FW can be used to compute $\Value{\pi}{\star}(\rho)$ even if the initial state is random. In this case, however, nature's policy gradients of the form $\nabla_P\Value{\pi}{P}(s_0)$ must be replaced with $\sum_{s_0\in\mc S}\rho(s_0)\,\nabla_P\Value{\pi}{P}(s_0)$. Throughout the first experiment we employ PLD with Gibbs parameter~$\beta=160$, stepsize~$\eta=0.8$, initial iterate $\xi^{(0)}=\xi_{\rm ref}$ corresponding to the nominal transition kernel~$P_{\rm ref}$ and~$M=100$ iterations. In addition, we use FW with tolerance~$\epsilon=10^{-2}$, stepsizes $\{\alpha_m\}_{m=0}^{\infty}$ chosen as in Theorem~\ref{thm:CPI:PE} and initial iterate $P^{(0)}=P_{\rm ref}$. We also work with variants of the PLD and FW algorithms that output the best iterates found during execution. We first assume that~$\mc P$ constitutes an $(s,a)$-rectangular uncertainty set of the form
\[
    \mc P=\left\{P\in\Delta(\mc S)^{S\times A}\;:\; \left\|P(\cdot|s,a)-P_{\rm ref}(\cdot|s,a) \right\|_2\le r \ \forall s\in\mc S, \, a\in\mc A \right\}
\]
with size parameter~$r\ge 0$. Note that if $P\in\mc P$ and~$r>0$, then $P(s'|s,a)$ can be strictly positive even if~$s'$ is not adjacent to~$s$. Figure~\ref{fig:PE:rectangular} shows the worst-case policy values output by PLD averaged over $20$ independent simulation runs and compares them against the deterministic values output by FW. We highlight that the standard deviations of the values output by PLD range from~ $6.50\times 10^{-5}$ to~$0.12$ and are therefore practically negligible. As expected from Theorems~\ref{thm:convergence:PLD} and~\ref{thm:CPI:PE}, we observe that the two algorithms are consistent. That is, if the uncertainty set is rectangular, then both PLD and FW succeed in solving the robust policy evaluation problem to global optimality. Figure~\ref{fig:PE:traj:rectangular} visualizes the policy values associated with the iterates $\xi^{(m)}$ of a single simulation run of PLD, illustrating the exploratory nature of the algorithm. Specifically, we see that
for large~$m$ the policy values oscillate around a constant level.

\begin{figure}[!htbp]
    \centering
    \begin{subfigure}{0.457\textwidth}
    \centering
    \begin{adjustbox}{width=0.8\linewidth}
\begin{tikzpicture}
\definecolor{crimson2143940}{RGB}{214,39,40}
\definecolor{darkgray176}{RGB}{176,176,176}
\definecolor{darkorange25512714}{RGB}{255,127,14}
\definecolor{forestgreen4416044}{RGB}{44,160,44}
\definecolor{steelblue31119180}{RGB}{31,119,180}
\begin{axis}[
log basis x={10},
tick align=outside,
tick pos=left,
x grid style={darkgray176},
xlabel={size parameter $r$},
xmin=0.0009, xmax=1.1,
xmode=log,
xtick style={color=black},
xtick={1e-05,0.0001,0.001,0.01,0.1,1,10},
xticklabels={
  \(\displaystyle {10^{-5}}\),
  \(\displaystyle {10^{-4}}\),
  \(\displaystyle {10^{-3}}\),
  \(\displaystyle {10^{-2}}\),
  \(\displaystyle {10^{-1}}\),
  \(\displaystyle {10^{0}}\),
  \(\displaystyle {10^{1}}\)
},
y grid style={darkgray176},
ylabel={$V_{\pi}^{\star}(\rho)$},
ymax=35, ymin=5,
ytick style={color=black},
legend style={
    at={(0.3,0.8)}, 
    anchor=south east, 
    legend cell align=left,
    draw=none, 
    fill=none, 
    font=\small 
  }
]

\path [fill=blue, fill opacity=0.2]
(axis cs:0.001,5.865727)
--(axis cs:0.01,6.092396909)
--(axis cs:0.1,8.182523892)
--(axis cs:1,32.9423076)
--(axis cs:1,32.55518104)
--(axis cs:0.1,8.158394643)
--(axis cs:0.01,6.090051184)
--(axis cs:0.001,5.865513)
--cycle;

\addplot [semithick, steelblue31119180, mark=*, mark size=3, mark options={solid}]
table {%
0.001 5.865619905
0.01 5.865727
0.1 8.170459267
1 32.74874432
};\addlegendentry{PLD}

\addplot [semithick, crimson2143940 ,mark=asterisk, mark size=3, mark options={solid}]
table {%
0.001 5.859258919371496
0.01 6.118144033570185
0.1 8.50248329582078
1 33.83825996286552
};\addlegendentry{CPI}
\end{axis}

\end{tikzpicture}
    \end{adjustbox}           
    \caption{Robust policy value output by PLD (blue) and FW (red) for an $(s,a)$-rectan\-gular uncertainty set of varying size~$r$. 
    }
    \label{fig:PE:rectangular}
    \end{subfigure}
    \centering
    \raisebox{1pt}[0pt][0pt]{
    \begin{subfigure}{0.45\textwidth}
   \begin{adjustbox}{width=0.8\linewidth}
\begin{tikzpicture}

\definecolor{darkgray176}{RGB}{176,176,176}
\definecolor{steelblue31119180}{RGB}{31,119,180}
\begin{axis}[
log basis x={10},
tick align=outside,
tick pos=left,
x grid style={darkgray176},
xlabel={iteration counter $m$},
xmin=0.9, xmax=150,
xmode=log,
xtick style={color=black},
xtick={0.01,0.1,1,10,100,1000,10000,100000},
xticklabels={
  \(\displaystyle {10^{-2}}\),
  \(\displaystyle {10^{-1}}\),
  \(\displaystyle {10^{0}}\),
  \(\displaystyle {10^{1}}\),
  \(\displaystyle {10^{2}}\),
  \(\displaystyle {10^{3}}\),
  \(\displaystyle {10^{4}}\),
  \(\displaystyle {10^{5}}\)
},
y grid style={darkgray176},
ylabel={$V_{\pi}^{P^{\xi^{(m)}}}(\rho)$},
ymax=35, ymin=5,
ytick style={color=black},
legend style={
    at={(0.9,0.02)}, 
    anchor=south east, 
    legend cell align=left,
    draw=none, 
    fill=none, 
    font=\small 
  }
]

\addplot [semithick, steelblue31119180]
table {
0 5.84
1 5.84
2 19.4122154410229
3 26.6715023820442
4 30.1513021553255
5 30.9813120670705
6 31.5492133845308
7 31.8697023058361
8 31.9369994905341
9 31.9543879467354
10 32.0995675748827
11 32.002638629924
12 32.0902449275542
13 32.1019990037716
14 32.1560973651982
15 32.1782300200018
16 32.0908090132001
17 32.0583832279242
18 32.2752400693617
19 32.2839645507922
20 32.4283130142114
21 32.453849069596
22 32.3420047472245
23 32.4164726680961
24 32.4815809719849
25 32.5127027261467
26 32.3144882095683
27 32.4055503987357
28 32.4261493375826
29 32.3502993249654
30 32.4751937322006
31 32.4422826817547
32 32.3554906599682
33 32.3073697019279
34 32.5014265931861
35 32.1962113727954
36 32.4737081709225
37 32.2494569164903
38 32.5071723019385
39 32.2017559348698
40 32.4290717155663
41 32.3119500997628
42 32.4184851385649
43 32.1501661474638
44 32.2837929569973
45 32.4017309237501
46 32.4078260537918
47 32.3706697790202
48 32.4180685191362
49 32.5589687857205
50 32.3656994829759
51 32.5381508031091
52 32.5352149809753
53 32.4866142738038
54 32.4089036264262
55 32.1664012649808
56 32.1365866204281
57 32.3854949249439
58 32.324363552725
59 32.1676594884134
60 32.4737028432549
61 32.1687922013788
62 32.2842653964102
63 32.2322712728331
64 32.4065098099054
65 32.2605342912729
66 32.3273789571945
67 32.2396363009171
68 32.3521514408562
69 32.4381363845207
70 32.4466221960369
71 32.4654016835005
72 32.3721907485192
73 32.4755908781986
74 32.4090719923418
75 32.4168568603514
76 32.3468811308774
77 32.312659753504
78 32.3319701177595
79 32.3487425447622
80 32.2156343984812
81 32.3333718381061
82 32.1795769539094
83 32.233341730276
84 32.3769871271865
85 32.3276349409199
86 32.4638467060958
87 32.3640954278788
88 32.2086464712105
89 32.1621347945714
90 32.4292128663404
91 32.3450473444064
92 32.3874570839556
93 32.4703730658411
94 32.4831122362043
95 32.2795948570199
96 32.546352968222
97 32.2411097639444
98 32.3244384858257
99 32.3789156061737
100 32.535383437112
101 32.5195185439209
};\addlegendentry{PLD} 
\end{axis}

\end{tikzpicture}
    \end{adjustbox}
    \caption{Trajectory of policy values computed by PLD for an $(s,a)$-rectangular uncertainty set of fixed size~$r=10$. 
    }
    \label{fig:PE:traj:rectangular}
    \end{subfigure}}
    \vspace{-15pt}
    \caption{Comparison of PLD (Algorithm~\ref{alg:PLD}) against FW (Algorithm~\ref{alg:CPI}) on a stochastic GridWorld problem with an $(s,a)$-rectangular uncertainty set.}
    \label{fig:rect}
    \vspace{-20pt}
\end{figure}
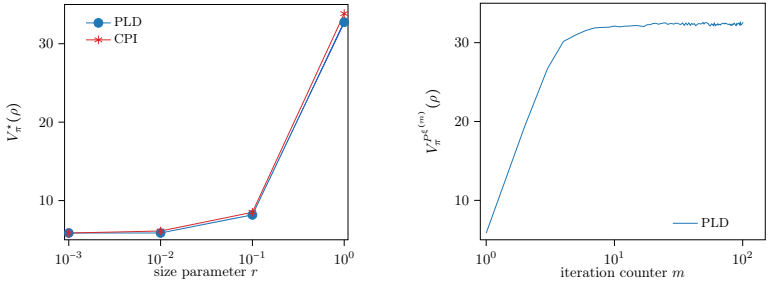

Next, we assume that $\mc P$ constitutes a non-rectangular ambiguity set of the form 
\[
    \mc P=\left\{P^\xi \;:\;(\xi-\xi_{\rm ref})^\top H (\xi-\xi_{\rm ref})\leq r \right\}
\]
with size parameter~$r\ge 0$ and Hessian matrix $H=\mathrm{diag}(1,2,\ldots,(S-1)SA)$. As shown in Appendix~\ref{example:MLE:detail}, ellipsoidal uncertainty sets of this type naturally emerge when maximum likelihood estimation is used to construct statistically optimal confidence regions for~$\xi$. Figure~\ref{fig:PE:nonrectangular} shows the worst-case policy values output by PLD (averaged over 20 independent simulation runs) and FW. The standard deviations of the values output by PLD range from~$3.57\times 10^{-2}$ to~$7.75\times 10^{-2}$ and are thus again negligible. We observe that for $r\lesssim 1$ PLD reports higher worst-case policy values than FW. This suggests that the deterministic FW method may get trapped in local maxima, while the randomized PLD method manages to escape local maxima. For $r\gtrsim 1$ the outputs of PLD and FW match. This is to be expected from Theorem~\ref{thm:CPI:PE} because the uncertainty set~$\mc P$ converges to the $(s,a)$-rectangular product simplex $\Delta(\mc S)^{S\times A}$---and thus becomes increasingly rectangular---as~$r$ grows.  
Figure~\ref{fig:PE:nonrectangular} visualizes the policy values associated with the iterates $\xi^{(m)}$ of a single simulation run of PLD. 
We remark that PLD can outperform FW by up to 80\% on $2\times 2$ GridWorld problems (not shown).

Table~\ref{tab:runtime:nonrect} shows the runtimes of PLD and FW for non-rectangular uncertainty sets of different sizes. Despite the suboptimal theoretical convergence rate, PLD is empirically faster than FW while producing more accurate solutions for robust policy evaluation problems with non-rectangular uncertainty sets.

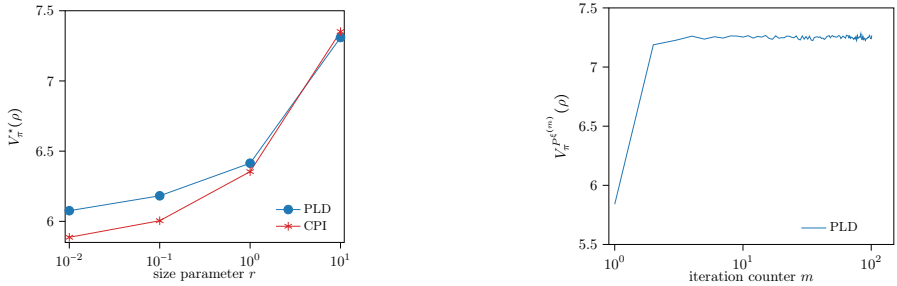
\begin{figure}[!htb]
    \centering
    \begin{subfigure}{0.45\textwidth}
    \begin{adjustbox}{width=0.8\linewidth}
\begin{tikzpicture}

\definecolor{darkgray176}{RGB}{176,176,176}
\definecolor{darkorange25512714}{RGB}{255,127,14}
\definecolor{steelblue31119180}{RGB}{31,119,180}
\definecolor{crimson2143940}{RGB}{214,39,40}

\begin{axis}[
log basis x={10},
tick align=outside,
tick pos=left,
x grid style={darkgray176},
xlabel={size parameter $r$},
xmin=0.009, xmax=11,
xmode=log,
xtick style={color=black},
xtick={0.001,0.01,0.1,1,10,100,1000},
xticklabels={
  \(\displaystyle {10^{-3}}\),
  \(\displaystyle {10^{-2}}\),
  \(\displaystyle {10^{-1}}\),
  \(\displaystyle {10^{0}}\),
  \(\displaystyle {10^{1}}\),
  \(\displaystyle {10^{2}}\),
  \(\displaystyle {10^{3}}\)
},
y grid style={darkgray176},
ylabel={$V_{\pi}^{\star}(\rho)$},
ymax=7.5, ymin=5.85,
ytick style={color=black},
legend style={
    at={(0.98,0.02)}, 
    anchor=south east, 
    legend cell align=left,
    draw=none, 
    fill=none, 
    font=\small 
  }
]

\addplot [semithick, steelblue31119180, mark=*, mark size=3, mark options={solid}]
table {%
0.01 6.075820608
0.1 6.181987842
1 6.414008407
10 7.30935177
};\addlegendentry{PLD}
\addplot [semithick, crimson2143940, mark=asterisk, mark size=3, mark options={solid}]
table {%
0.01 5.886482828593631
0.1 6.004422387914141
1 6.354287936546278
10 7.351553536405394
};\addlegendentry{CPI}

\end{axis}

\end{tikzpicture}
    \end{adjustbox}
    \caption{Robust policy value output by PLD (blue) and FW (red) for a non-rectan\-gular uncertainty set of varying size~$r$. 
    }
        \label{fig:PE:nonrectangular}
    \end{subfigure}
    \hfill
    \begin{subfigure}{0.45\textwidth}
\begin{adjustbox}{width=0.8\linewidth}
\begin{tikzpicture}

\definecolor{darkgray176}{RGB}{176,176,176}
\definecolor{steelblue31119180}{RGB}{31,119,180}

\begin{axis}[
log basis x={10},
tick align=outside,
tick pos=left,
x grid style={darkgray176},
xlabel={iteration counter $m$},
xmin=0.9, xmax=150,
xmode=log,
xtick style={color=black},
xtick={0.01,0.1,1,10,100,1000,10000,100000},
xticklabels={
  \(\displaystyle {10^{-2}}\),
  \(\displaystyle {10^{-1}}\),
  \(\displaystyle {10^{0}}\),
  \(\displaystyle {10^{1}}\),
  \(\displaystyle {10^{2}}\),
  \(\displaystyle {10^{3}}\),
  \(\displaystyle {10^{4}}\),
  \(\displaystyle {10^{5}}\)
},
y grid style={darkgray176},
ylabel={$V_{\pi}^{P^{\xi^{(m)}}}(\rho)$},
ymax=7.5, ymin=5.5,
ytick style={color=black},
legend style={
    at={(0.9,0.02)}, 
    anchor=south east, 
    legend cell align=left,
    draw=none, 
    fill=none, 
    font=\small 
  }
]

\addplot [semithick, steelblue31119180]
table {%
0 5.84
1 5.84
2 7.18799690669511
3 7.22670006735493
4 7.26112968149114
5 7.23624650566163
6 7.25621807813013
7 7.24505283480948
8 7.26335135812127
9 7.26167250683314
10 7.25345425171263
11 7.26519384017686
12 7.24495048164226
13 7.25857867995663
14 7.25336986771952
15 7.26683294848794
16 7.24180702147265
17 7.23474058150847
18 7.2647910152456
19 7.24400381789143
20 7.24784446709072
21 7.24809858336005
22 7.26178394879536
23 7.26070383763911
24 7.25112428522319
25 7.24816915247125
26 7.26360584672995
27 7.23585375450882
28 7.22942550460396
29 7.26302870888556
30 7.23598788718521
31 7.25754175664301
32 7.23629372498193
33 7.26111513770515
34 7.22864261781731
35 7.22393997614655
36 7.25005959216838
37 7.25382536754609
38 7.25596996116991
39 7.25019105034177
40 7.25491969132221
41 7.24210649264588
42 7.24538496420518
43 7.23347310835593
44 7.24207735630196
45 7.25668521819925
46 7.26176935315116
47 7.26590809808308
48 7.24743715221391
49 7.26122335101552
50 7.26585724891097
51 7.25536438913167
52 7.25750564505949
53 7.25343306522989
54 7.25278231373456
55 7.26076979435528
56 7.26616694379142
57 7.2603806002866
58 7.24141093156492
59 7.24975211424692
60 7.26063089281825
61 7.25597912094784
62 7.251506193992
63 7.2440355450529
64 7.26871722379416
65 7.2650849929016
66 7.26602061390203
67 7.24789288552314
68 7.26347738469304
69 7.24291058527511
70 7.25310702772016
71 7.24985434764289
72 7.24203178871523
73 7.23462261070694
74 7.24750524328415
75 7.23709093613041
76 7.25587043863701
77 7.22642353374481
78 7.26639708760609
79 7.24333388230151
80 7.24793295472362
81 7.25547440808309
82 7.2529257923983
83 7.28131928493449
84 7.23617804433528
85 7.23308353204612
86 7.26057176018274
87 7.23408430753446
88 7.24481037166406
89 7.23580804197908
90 7.22420815213505
91 7.24078178281555
92 7.23921283411915
93 7.24133903490446
94 7.25125299773285
95 7.24914285082326
96 7.25899800221344
97 7.26764529823436
98 7.25100906857305
99 7.24088443804652
100 7.23947782110064
101 7.26817668595079
};\addlegendentry{PLD} 
\end{axis}

\end{tikzpicture}
    \end{adjustbox}    
    \caption{Trajectory of policy values computed by PLD for a non-rectangular uncertainty set of fixed size~$r=10$.
    }
    \label{fig:PE:traj:non:rectangular}
    \end{subfigure}
    \vspace{-15pt}
    \caption{Comparison of PLD (Algorithm~\ref{alg:PLD}) against FW (Algorithm~\ref{alg:CPI}) on a stochastic GridWorld problem with a non-rectangular uncertainty set.}
    \label{fig:non-rect}
\end{figure}
\begin{table}[htb!]
\small
    \centering
    \vspace{-7pt}
    \begin{tabular}{|l|r|r|r|r|}
\hline
\!\textbf{Runtime [s]} \! &  $r=0.01\quad$ &  $r=0.1\quad\;\;$  &  $r=1\quad\;\;$  & $r=10\quad\;\;$\\
\hline
PLD (Algorithm~\ref{alg:PLD}) &  $357.56\,(6.08)$ & $310.05\,(6.20)$ &  $428.57\,(6.73)$ & $370.87\,(91.08)$\\
\hline
FW (Algorithm~\ref{alg:CPI}) & $499.48\quad\;\;$ & $850.60\quad\;\;$ & $948.65\quad\;\;$ & $1{,}950.04\quad\;\;$\\
\hline
\end{tabular}
    \caption{Runtimes of PLD and FW  for non-rectangular uncertainty sets. For PLD we report both means and standard deviations (in parenthesis) over~$20$ simulation runs.}
    \label{tab:runtime:nonrect}
    \vspace{-15pt}
\end{table}
\vspace{-5pt}

\subsection{Garnet MDPs: Rectangular Uncertainty Sets}
\label{ssec:rec:amb}

The purpose of the second experiment is to show that FW may solve robust policy evaluation problems with $s$-rectangular uncertainty sets faster than the state-of-the-art method for this problem class developed in~\cite{wang2022convergence}. We use the Generalized Average Reward Non-stationary Environment Test-bench (Garnet)~\cite{archibald1995generation,bhatnagar2009natural} to generate random reference transition kernels~$P_{\text{ref}}$ with a prescribed number of states and actions and with a prescribed branching parameter~$b\in[0,1]$. By definition, $b$ determines the proportion of states that are reachable from any given state-action pair in one single transition. We set the branching parameter to~$b=1$ and the discount factor to~$\gamma=0.6$, and we generate the cost~$c(s,a)$ corresponding to any $a\in\mc A$ and $s\in\mc S$ randomly from the uniform distribution on~$[0,1]$. The initial state~$s_0$ follows the uniform distribution over~$\mc S$. In addition, we fix a policy~$\pi\in\Pi$ defined through~$\pi(a|s)=v(s,a)/\sum_{a'\in\mc A} v(s,a')$, where $v(s,a)$ is sampled uniformly from~$\{1,\ldots,10\}$ for every~$s\in\mc S$ and~$a\in\mc A$.
%
Finally, we assume that~$\mc P$ constitutes an $s$-rectangular uncertainty set of the form
\[
    \mc P=\left\{P\in\Delta(\mc S)^{S\times A}\;:\; \| P(\cdot|s,\cdot)-P_{\rm ref}(\cdot|s,\cdot)\|_{1} \leq 5 \ \forall s\in\mc S \right\}.
\]
We solve the resulting instances of the robust policy evaluation problem~\eqref{expr:DMDP} with FW and with~\cite[Algorithm~2]{wang2022convergence}, a state-of-the-art projected gradient descent method.

In the second experiment we seek $\delta$-optimal solutions of~\eqref{expr:DMDP} for~$\delta=0.01$. To this end, we use FW with tolerance~$\epsilon=\delta S/(1-\gamma)$, stepsizes~$\{\alpha_m\}_{m=1}^\infty$ chosen as in Theorem~\ref{thm:CPI:PE} and initial iterate~$P^{(0)}=P_{\text{ref}}$. In addition, we use~\cite[Algorithm~2]{wang2022convergence} with initial iterate~$P^{(0)}=P_{\text{ref}}$ and stepsize~$(1-\gamma)^3/(2\gamma S^2)$ as suggested by~\cite[Theorem~4.4]{wang2022convergence}. The above estimates of the iteration complexity and the distribution mismatch coefficient imply that we would have to run~\cite[Algorithm~2]{wang2022convergence} over~$32\gamma S^5 A/(\delta^2(1-\gamma)^6)$ iterations in order to guarantee that it outputs a $\delta$-optimal solution. Unfortunately, this is impractical. For example, already our smallest test problem with only~$S=100$ states would require more than~$10^{18}$ iterations. We thus use the inequality
\[
    |\Value{\pi}{P^{(m+1)}}(\rho)-\Value{\pi}{P^{(m)}}(\rho)|\le 2\times 10^{-5}
\]
as a heuristic termination criterion. Even though it has
no theoretical justification, this criterion ensures that~\cite[Algorithm~2]{wang2022convergence} terminates within a reasonable amount of time and outputs similar value estimates as FW with a maximum difference of~$2.5\%$.

The direction-finding subproblems of FW as well as the projection subproblems of~\cite[Algorithm~2]{wang2022convergence} are solved with GUROBI. To faithfully assess algorithmic efficiency, we record the solver times for these most time-consuming subroutines. For all other processes we record the wall-clock time. Table~\ref{tab:runtime} reports the overall runtimes of FW and~\cite[Algorithm~2]{wang2022convergence} (based on the authors' code available from GitHub\footnote{\url{https://github.com/JerrisonWang/ICML-DRPG}}) averaged over 20 random instances with $A=10$ actions and increasing numbers of states.

As expected from the analysis of the iteration complexities, FW is significantly faster than~\cite[Algorithm~2]{wang2022convergence} on instances with large state spaces. The value estimates of both algorithms differ at most $2.5\%$, with FW outputting a more accurate solution. 

\begin{table}[htb!]
\small
    \centering
    \begin{tabular}{|l|c|c|c|c|}
\hline
\textbf{Runtime [s]}  &  $S=100$  &  $S=200$  &  $S=300$  &  $S=400$\\
\hline
\cite[Algorithm 2]{wang2022convergence} &  $51.14$ & $426.83$ &  $1{,}887.47$ & $5{,}328.41 $\\
\hline
FW (Algorithm~\ref{alg:CPI}) & $28.39$ & $209.18$ & $696.60$ & $1{,}120.26 $\\
\hline
\end{tabular}
    \caption{Runtimes of the projected gradient descent algorithm developed in~\cite{wang2022convergence} and FW on Garnet MDP instances with $s$-rectangular uncertainty sets with $A=10$.}
    \label{tab:runtime}
\end{table}

\subsection{Machine Replacement: Non-Rectangular Uncertainty Sets}
\label{ssec:non:rec:amb}
The purpose of the third experiment is to assess the out-of-sample performance of different data-driven policies for MDPs with unknown transition kernels. Our experiment is based on a now standard machine replacement problem described in~\cite{delage2010percentile,wiesemann2013robust}. The goal is to find a repair strategy for a machine whose condition is described by eight ``operative'' states $1, \ldots, 8$ and two ``repair'' states R1 and R2. The available actions are ``do nothing'' or ``repair.'' The states~8, R1 and~R2 incur a cost of~20, 2 and~10 per time period, respectively, whereas no cost is incurred in the other states. The discount factor is set to~$\gamma=0.8$, and the initial state~$s_0$ follows the uniform distribution on~$\mc S$. In addition, we define the transition kernel~$P^0$ as in~\cite[Section~6]{wiesemann2013robust}. The optimal value of the resulting (non-robust) policy improvement problem then amounts to~$5.98$. 

In the following we assume that~$P^0$ is unknown but falls within a known structural uncertainty set~$\mc P^0$. We specifically assume that some of the $2\times10^2$ transition probabilities are known to vanish such that~$\mc P^0 =\{P^\xi:\xi\in\Xi^0\}$, where $P^\xi$ is an affine function, and $\Xi^0=[0,1]^{25}$ is a hypercube of dimension~$25$. The components of $\xi$ represent different entries of the transition kernel that are neither known to vanish nor determined by the normalization conditions $\sum_{s'\in\mc S} P^0(s'|s,a)=1$ for all $s\in\mc S$ and $a\in\mc A$. Sometimes we will additionally assume that certain transition probabilities are known to be equal, in which case~$\Xi^0=[0,1]^5$ reduces to a hypercube of dimension~$5$. Full details about these structural assumptions are provided in~\cite[Section~6]{wiesemann2013robust}.

In addition to structural information, there is statistical information about~$P^0$, that is, $P^0$ is indirectly observable through a history of~$n$ states and actions generated under a known policy~$\pi^0$. We assume that $\pi^0$ chooses the actions ``do nothing'' and ``repair'' in each operative state $1, \ldots, 7$ with probabilities $0.8$ and $0.2$, respectively. In the states $8$ and R2, $\pi^0$ always chooses the action ``repair'', and in state R1, $\pi^0$ always chooses the action ``do nothing.'' In the following we use~$\xi^n$ to denote the maximum likelihood estimator for the parameter~$\xi^0\in\Xi^0$ that generates the unknown true transition kernel $P^0=P^{\xi^0}$. Following \cite[Section~5]{wiesemann2013robust}, one can use the observation history of length~$n$ to construct an ellipsoidal confidence region~$\Xi^n\subseteq\Xi^0$ centered at~$\xi^n$ that contains $\xi^0$ with probability at least $1-\alpha$ for any prescribed $\alpha\in[0,1]$. It is then natural to construct an uncertainty set $\mc P^n =\{P^\xi:\xi\in\Xi^n\}$ that amalgamates all structural and statistical information about~$P^0$ and is guaranteed to contain the data-generating kernel~$P^0$ with probability $1-\alpha$. A related but simpler recipe for constructing uncertainty sets using maximum likelihood estimation is sketched in Appendix~\ref{example:MLE:detail} for illustrative purposes. Full details are provided in \cite[Section~5]{wiesemann2013robust}. 

The uncertainty set~$\mc P^n$ is non-rectangular, and thus the corresponding robust policy improvement problem is hard. A sequential convex optimization procedure that solves a decision rule approximation of the robust policy improvement problem is described in \cite[Algorithm~4.1]{wiesemann2013robust}. To our best knowledge, this is the only existing method for addressing robust MDPs with non-rectangular uncertainty sets. Replacing~$\mc P^n$ with its $s$-rectangular or even its $(s,a)$-rectangular hull leads to a simpler robust policy improvement problem that can be solved exactly and efficiently via dynamic programming. However, the resulting optimal policy is dominated by the policy output by \cite[Algorithm~4.1]{wiesemann2013robust} in that it generates up to $30\%$ or even $60\%$ higher out-of-sample net present costs, respectively, see \cite[Table~3]{wiesemann2013robust}.


Unlike \cite[Algorithm~4.1]{wiesemann2013robust}, ACA (Algorithm~\ref{alg:PG-min-oracle}) uses no decision rule approximation and computes near-optimal solutions to the robust policy improvement problem of any prescribed accuracy (see Theorem~\ref{thm:PO:convergence}). We will now show numerically that the near-optimal policies found by ACA dominate the approximately optimal policies found by \cite[Algorithm~4.1]{wiesemann2013robust} in terms of out-of-sample net present cost under~$P^0$. Throughout the experiment we employ ACA with iteration number $K=100$ and stepsize $\eta=0.05$. The critic's subproblem computes near-optimal solutions to the robust policy evaluation problem by using PLD (Algorithm~\ref{alg:PLD}) with initial iterate~$\xi^{(0)}=\xi^n$, Gibbs parameter $\beta=450$, stepsize $\eta=0.07$ and iteration number $M=50$. We work with a variant of PLD that outputs the best iterate found during execution. 

Tables~\ref{tab:xi:5:improvement} and~\ref{tab:xi:25:improvement} compare the out-of-sample costs of the policies found by ACA and \cite[Algorithm~4.1]{wiesemann2013robust} under the assumption of full ($\xi\in\mathbb R^5$) and partial ($\xi\in\mathbb R^{25}$) structural information, respectively, as a function of the length~$n$ of the observation history and the coverage probability $1-\alpha$ of the uncertainty set. The out-of-sample costs corresponding to~\cite[Algorithm~4.1]{wiesemann2013robust} in Table~\ref{tab:xi:5:improvement} are directly borrowed from~\cite[Table~3]{wiesemann2013robust}. Conversely, the out-of-sample costs corresponding to~\cite[Algorithm~4.1]{wiesemann2013robust} in Table~\ref{tab:xi:25:improvement} are computed using the authors' source code in C++ (private communication).

Table~\ref{tab:xi:5:improvement} shows that when the transition kernel has only $5$ degrees of freedom, both policies generate an out-of-sample cost close to the optimal value $5.98$ of the classical policy improvement problem under the unknown true transition kernel~$P^0$. Moreover, the out-of-sample costs of the two policies differ at most by~$0.5\%$. These observations are not surprising because kernels with only $5$~degrees of freedom are easy to learn and because the uncertainty set~$\mc P^n$ is small already for small sample sizes~$n$. In this case, the decision rule approximation underlying \cite[Algorithm~4.1]{wiesemann2013robust} is highly accurate. Algorithm~\ref{alg:PG-min-oracle}, which is designed for uncertainty sets of arbitrary size and solves the critic's subproblem with a randomized PLD scheme, slightly outperforms the benchmark method only for the smallest sample sizes considered.


\begin{table}[htb!]
\small
\vspace{-3pt}
    \centering
    \begin{tabular}{|c|c|c|c|c|}
        \hline \diagbox[height=18pt]{\quad$n$}{$1-\alpha$} & $80\%$ & $90\%$ & $95\%$ & $99\%$ \\\hline
        $\phantom{1{,}}500$ & $6.02 \,(6.04)$ & $6.02\, (6.04)$ & $6.02 \,(6.04)$ & $6.02 \,(6.06)$\\\hline
        $1{,}000$ & $6.03 \,(6.02)$ & $6.04 \,(6.02)$ & $6.04 \,(6.02)$ & $6.00 \,(6.02)$ \\\hline
        $2{,}500$ & $6.03\, (6.01)$ & $6.03 \,(6.00)$ & $6.02\, (6.00)$ & $6.02 \,(6.01)$\\\hline
        $5{,}000$ & $6.01 \,(5.99)$ & $6.03 \,(5.99)$ & $6.02 \,(5.99)$ & $6.03 \,(5.99)$ \\\hline
    \end{tabular}
    \caption{
    Out-of-sample costs of the policies found by ACA and \cite[Algorithm~4.1]{wiesemann2013robust} (in parenthesis) under full structural information (kernel with $5$ degrees of freedom). 
    }
    \label{tab:xi:5:improvement}
    \vspace{-20pt}
\end{table}

Table~\ref{tab:xi:25:improvement} shows that when the transition kernel has $25$ degrees of freedom, then Algorithm~\ref{alg:PG-min-oracle} outperforms~\cite[Algorithm~4.1]{wiesemann2013robust} uniformly across all values of~$n$ and~$1-\alpha$. The advantage is most significant when the uncertainty set is large ({\em i.e.}, for $n\leq 1{,}000$).
We also highlight that the average wall-clock time for solving all problem instances with Algorithm~\ref{alg:PG-min-oracle} amounts to~$179.16$ seconds. The average solver time consumed by~\cite[Algorithm~4.1]{wiesemann2013robust}, on the other hand, amounts to~$351.24$ seconds.

\begin{table}[htb!]
\small
\vspace{-5pt}
    \centering
    \begin{tabular}{|c|c|c|c|c|}
        \hline 
        \diagbox[height=18pt]{\quad$n$}{$1-\alpha$}& $80\%$ & $90\%$ & $95\%$ & $99\%$ \\\hline
        $\phantom{1{,}}500$ & $8.34 \,(15.72)$ & $8.40\, (14.24)$ & $6.48 \,(13.44)$ & $7.41\, (19.29)$\\\hline
        $1{,}000$ & $6.57\, (8.45)$ & $6.27\, (9.79)$ & $6.96\, (10.60)$ & $6.77\, (10.02)$ \\\hline
        $2{,}500$ & $6.26\, (6.55)$ & $6.08\, (6.84)$ & $6.36 \,(6.82)$ & $6.20\, (8.47)$\\\hline
        $5{,}000$ & $6.23 \,(6.64)$ & $6.49 \,(6.53)$ & $6.29 \,(6.50)$ & $6.24\, (6.54)$ \\\hline
    \end{tabular}
    \caption{Out-of-sample costs of the policies found by ACA and \cite[Algorithm~4.1]{wiesemann2013robust} (in parenthesis) under partial structural information (kernel with $25$ degrees of freedom). 
    }
    \label{tab:xi:25:improvement}
        \vspace{-20pt}
\end{table}

\vspace{-5pt}
\section*{Acknowledgements}
This work was supported as a part of the NCCR Automation, a National Center of Competence in Research, funded by the Swiss National Science Foundation (grant number 51NF40\_225155).
The authors are indebted to George Lan and Yan Li for helpful comments on an earlier version of this paper, and to Ilyas Fatkhullin for useful discussions.

\appendix

\section{Construction of Uncertainty Sets via Maximum Likelihood Estimation}\label{example:MLE:detail}
We now review a standard procedure for constructing an uncertainty set for the transition kernel of an MDP as described in~\cite[Section~5]{wiesemann2013robust}. This uncertainty set is statistically optimal in a precise sense but fails to be rectangular.

Assume for ease of exposition that it is possible to move from any state of the MDP to any other state in one single transition, that is, all entries of the unknown transition kernel are strictly positive. The uncertainty set can thus be expressed as the image of a solid parameter set~$\Xi\subseteq \mb R^{q}$ of dimension $q=SA(S-1)$ under an affine function~$P^\xi$. Specifically, there exists a bijection $g: \mc S\times \mc A\times (\mc S\backslash \{S\})\rightarrow \{1,\ldots,q\}$, and any such bijection can be used to construct a valid function~$P^\xi$ defined through $P^{\xi}(s' | s, a) = \xi_{g(s, a, s')}$ for all $s \in \mathcal{S}$, $a \in \mathcal{A}$ and $s' \in \mathcal{S}\backslash \{S\}$, and $P^{\xi}(S | s, a) = 1 - \sum_{s' =1}^{S-1} \xi_{g(s, a, s')}$ for all $s \in \mathcal{S}$ and $a \in \mathcal{A}$. The largest imaginably uncertainty set $\mc P_0=\Delta(\mc S)^{S\times A}$ of all possible transition kernels can then be expressed as the image of the parameter set
\begin{equation*}
    \Xi^0 = \left\{\xi \in \mb R^q_+: \sum_{s' =1}^{S-1} \xi_{g\left(s, a, s'\right)} \leq 1 \ \forall s\in \mc S,\, a\in \mc A\right\}
\end{equation*}
under $P^\xi$. In the following we assume that the decision maker has access to a state-action observation history $(s_0,a_0,\ldots,s_{n-1},a_{n-1})\in(\mc S\times\mc A)^n$ of the MDP generated under some known policy $\pi^0\in\Pi$ and the unknown true transition 
kernel~$P^{\xi^0}$ encoded by~$\xi^0\in\Xi^0$. 
The log-likelihood of observing this history under any $\xi \in \Xi$ is given by
\[
    \ell_n(\xi)=\sum_{t=0}^{n-2} \log [P^{\xi}(s_{t+1} | s_t, a_t)]+\zeta, \quad \text {where}\quad \zeta=\log [\rho(s_0)]+\sum_{t=0}^{n-1} \log [\pi^0(a_t | s_t)]
\]
is an irrelevant constant independent of~$\xi$, and $\rho$ represents the initial state distribution. One can show that the maximum likelihood estimator~$\xi^n$ that maximizes $\ell_n(\xi)$ over~$\Xi^0$ corresponds to the kernel of empirical transition probabilities \cite[Remark~6]{wiesemann2013robust}. This means that~$P^{\xi^n}(s'|s,a)$ coincides with number of observed transitions from~$(s,a)$ to~$s'$, normalized by the length~$n$ of the observation history. One can use the maximum likelihood estimator~$\xi^n$ as well as the log-likelihood function~$\ell_n(\xi)$ to construct a confidence set~$\{\xi\in\Xi^0: \ell_n(\xi)\geq \ell_n(\xi^n)-\delta \}$ for~$\xi^0.$
Indeed, this set contains~$\xi^0$ with probability $1-\alpha$ asymptotically for large $n$ if~$\delta$ is set to one half of the $(1-\alpha)$-quantile of the chi-squared distribution with $S-1$ degrees of freedom \cite[Theorem~5]{wiesemann2013robust}. This statistical guarantee persists if we approximate the log-likelihood function $\ell_n$ by its second-order Taylor expansion 
\[
    \varphi_n(\xi) =\ell_n(\xi^n)-\frac{1}{2}\left(\xi-\xi^n\right)^{\top}\left[\nabla_{\xi}^2 \ell_n\left(\xi^n\right)\right]\left(\xi-\xi^n\right).
\]
One can show that, as~$n$ grows, the scaled Hessian matrix $\nabla_{\xi}^2 \ell_n(\xi^n)/n$ converges in probability to the Fisher information matrix, which we denote as~$I(\xi^0)$ \cite[Section~2]{billingsley1961statistical}. In addition, the scaled estimation error~$\sqrt{n}(\xi^n-\xi^0)$ converges in distribution to the normal distribution with mean~$0$ and covariance matrix~$I(\xi^0)^{-1}$ \cite[Theorem~2.2]{billingsley1961statistical}.
A generalization of the classical Cramér-Rao inequality ensures that the covariance matrix of any unbiased estimator for~$\xi^0$ is bounded below by~$I(\xi^0)^{-1}/n$ in Loewner order asymptotically for large~$n$ 
\cite[Remark~7.9]{liese2008statistical}. In conjunction, these findings suggest that $\Xi^n=\{\xi\in\Xi^0: \varphi_n(\xi)\geq \ell_n(\xi^n)-\delta \}$ constitutes the smallest possible $(1-\alpha)$-confidence set for $\xi^0$ asymptotically for large~$n$. The uncertainty set $\mc P=\{P^{\xi}: \xi\in\Xi^n\}$ therefore enjoys a statistical efficiency property. However, it fails to be rectangular~\cite[pp.~173]{wiesemann2013robust}.

\vspace{-5pt}
\section{Auxiliary Lemmas}
The following results will be used throughout the main text. Their proofs are elementary and thus omitted.
\begin{lemma}[Relations between value functions{~\cite[Section 3.5]{sutton2018reinforcement}}]\label{lem:recursion:PE}For any $\pi\in\Pi$ and $P\in\mc P$ we have
\begin{enumerate}[label = (\roman*)]
    \item \label{item:valueinQ} $ \Value{\pi}{P}(s)=\sum_{a\in\mc A} \pi(a|s)\Q{\pi}{P}(s, a)$ for all $ s\in\mc S$,
    \item \label{item:QinG}$ \Q{\pi}{P}(s, a)=c(s,a)+\gamma\sum_{s'\in\mc S} P(s'|s,a)\Value{\pi}{P}(s') = \sum_{s'\in\mc S} P(s'|s,a) \Qsp{\pi}{P}(s,a,s')$ for all $s\in\mc S$ and $a\in\mc A$,
    \item \label{item:GinQ}$ \Qsp{\pi}{P} (s, a,s')=c(s,a)+ \gamma \sum_{a'\in\mc A} \pi(a'|s') \Q{\pi}{P}(s',a')=c(s,a)+\gamma \Value{\pi}{P}(s')$ for all $ s,s'\in\mc S$ and $a\in\mc A.  $
\end{enumerate}
\end{lemma}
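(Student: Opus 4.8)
The plan is to derive the three identities directly from the defining expectations of $\Value{\pi}{P}$, $\Q{\pi}{P}$, and $\Qsp{\pi}{P}$, using the tower property of conditional expectation together with the Markov structure encoded in~\eqref{eq:def:MDP}. Since each value function is an infinite discounted sum of the same cost process under the same measure $\mb P^P_\pi$, the differences between them amount only to what is being conditioned on at times $t=0$ and $t=1$, so each identity follows by conditioning on the next random variable in the chain.

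First I would prove~\ref{item:valueinQ}. Starting from $\Value{\pi}{P}(s)=\mb E^P_\pi[\sum_{t\ge 0}\gamma^t c(S_t,A_t)\mid S_0=s]$, I condition on $A_0$ and apply the tower property: $\Value{\pi}{P}(s)=\sum_{a\in\mc A}\mb P^P_\pi(A_0=a\mid S_0=s)\,\mb E^P_\pi[\sum_{t\ge 0}\gamma^t c(S_t,A_t)\mid S_0=s,A_0=a]$. By~\eqref{def:pi:pi} the first factor is $\pi(a|s)$, and by Definition~\ref{def:Q-function} the conditional expectation is $\Q{\pi}{P}(s,a)$, giving the claim. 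Next, for~\ref{item:GinQ}, I start from $\Qsp{\pi}{P}(s,a,s')$, peel off the $t=0$ term $c(s,a)$ (which is deterministic given $S_0=s,A_0=a$), shift the index, and use time-homogeneity of the Markov chain under $\mb P^P_\pi$ to recognize the remaining sum as $\gamma\,\mb E^P_\pi[\sum_{t\ge 0}\gamma^t c(S_t,A_t)\mid S_0=s']=\gamma\Value{\pi}{P}(s')$; expanding $\Value{\pi}{P}(s')$ via~\ref{item:valueinQ} then yields the first equality in~\ref{item:GinQ}. Finally, for~\ref{item:QinG}, I write $\Q{\pi}{P}(s,a)=c(s,a)+\gamma\,\mb E^P_\pi[\sum_{t\ge 0}\gamma^t c(S_{t+1},A_{t+1})\mid S_0=s,A_0=a]$, condition on $S_1$, use~\eqref{def:Q:pi} so that $\mb P^P_\pi(S_1=s'\mid S_0=s,A_0=a)=P(s'|s,a)$, and identify the inner expectation with $\Value{\pi}{P}(s')$ by time-homogeneity; this gives the first expression. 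The second expression in~\ref{item:QinG}, $\Q{\pi}{P}(s,a)=\sum_{s'\in\mc S}P(s'|s,a)\Qsp{\pi}{P}(s,a,s')$, follows the same way but conditions on $S_1$ inside the full sum (including the $t=0$ term) and uses Definition of $\Qsp{\pi}{P}$; alternatively it follows by combining the first expression in~\ref{item:QinG} with~\ref{item:GinQ}.

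I do not expect any genuine obstacle here: the only point requiring a little care is the use of time-homogeneity and the Markov property to re-index the tail sums — specifically that $\mb E^P_\pi[\sum_{t\ge 1}\gamma^{t} c(S_t,A_t)\mid S_1=s',\ldots]$ depends on the past only through $S_1$ and equals $\gamma\Value{\pi}{P}(s')$ — which is exactly the content of~\eqref{eq:def:MDP}. Since the reference~\cite[Section 3.5]{sutton2018reinforcement} already records these standard facts, a brief proof along the above lines suffices and I would keep it short.
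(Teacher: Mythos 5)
Your proposal is correct and follows essentially the same route as the paper: all three identities are obtained from the defining expectations via the tower property and the Markov structure in~\eqref{eq:def:MDP}, proving~\ref{item:valueinQ} first, then~\ref{item:GinQ} by peeling off the $t=0$ cost and re-indexing, and finally~\ref{item:QinG} by conditioning on $S_1$ (with the second expression obtained from~\ref{item:GinQ}), exactly as in the paper's proof.
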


\begin{proof}[Proof of Lemma~\ref{lem:recursion:PE}]
    As for Assertion~\ref{item:valueinQ}, we have 
    \begin{align*}
        \Value{\pi}{P} (s)&=\mb E^P_\pi\left[\sum_{t=0}^{\infty} \gamma^t c(S_t, A_t) \mid S_0=s\right]
        \\&=\sum_{a\in\mc A}\pi(a|s) \mb E^P_\pi\left[\sum_{t=0}^{\infty} \gamma^t c(S_t, A_t) \mid S_0=s,A_0=a\right]=\sum_{a\in\mc A}\pi(a|s)\Q{\pi}{P}(s,a),
    \end{align*}
where the second equality follows from the law of total expectation and~\eqref{def:pi:pi}, and the last equality follows from the definition of $\Q{\pi}{P}(s,a)$.
Next, we prove Assertion~\ref{item:GinQ}, which will help us to prove Assertion~\ref{item:QinG}. By the definition of~$\Qsp{\pi}{P}(s,a,s'),$ we have
    \begin{align*}
        \Qsp{\pi}{P} (s, a,s')&=\mb E^P_\pi\left[\sum_{t=0}^{\infty} \gamma^t c(S_t, A_t) \mid S_0=s, A_0=a,S_1=s'\right]
        \\&=\mb E^P_\pi\left[c(s,a)+\sum_{t=1}^{\infty} \gamma^t c(S_t, A_t) \mid S_1=s'\right]
        \\&=c(s,a)+\gamma \sum_{a'\in\mc A}\mb E^P_\pi\left[\sum_{t=0}^{\infty} \gamma^t c(S_t, A_t) \mid S_0=s', A_0=a'\right]\pi(a'|s')
        \\&=c(s,a)+ \gamma \sum_{a'\in\mc A} \pi(a'|s') \Q{\pi}{P}(s',a')
        \\&=c(s,a)+ \gamma \Value{\pi}{P}(s'),
    \end{align*}
where the second equality holds because $\{(S_t,A_t)\}_{t=1}^\infty$ is a Markov chain and because~$A_t$ is independent of this Markov chain conditional on~$S_t$ under~$\mb P^P_\pi$. The third equality follows from law of total expectation and~\eqref{def:pi:pi} together with an index shift $t\leftarrow t+1$, the fourth equality follows from the definition of $\Q{\pi}{P}(s,a)$, and the last equality follows from Assertion~\ref{item:valueinQ}.
As for Assertion~\ref{item:QinG}, finally, we have
    \begin{align*}
        \Q{\pi}{P}(s, a)&=\mb E^P_\pi\left[\sum_{t=0}^{\infty} \gamma^t c(S_t, A_t) \mid S_0=s,A_0=a\right]
        \\&=\sum_{s'\in\mc S} P(s'|s,a)\mb E^P_\pi\left[\sum_{t=0}^{\infty} \gamma^t c(S_t, A_t) \mid S_0=s,A_0=a,S_1=s'\right]
        \\&= \sum_{s'\in\mc S} P(s'|s,a) \Qsp{\pi}{P}(s,a,s')
        \\&=c(s,a)+ \gamma \sum_{s'\in\mc S} P(s'|s,a) \Value{\pi}{P}(s'),
    \end{align*}
    where the second equality follows from the law of total expectation and~\eqref{def:Q:pi}, the third equality follows from the definition of $\Qsp{\pi}{P}(s,a,s')$, and the fourth equality holds thanks to Assertion~\ref{item:GinQ}.
\end{proof}

\begin{lemma}[Relation between the advantage function and the action-next state value function]\label{lemma:advantage:Qsp}
For any $P,P'\in\mc P$ and $\pi\in\Pi,$ we have
\begin{align*}
    \sum_{s'\in\mc S} (P(s'|s,a)-P'(s'|s,a)) \Qsp{\pi}{P'}(s,a,s')= \sum_{s'\in\mc S} P(s'|s,a) \A{\pi}{P'}(s,a,s') \quad \forall s\in\mc S, a\in\mc A.
\end{align*}
\end{lemma}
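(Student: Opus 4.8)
The plan is to rewrite the right-hand side by unfolding the definition of the adversary's advantage function $\A{\pi}{P'}(s,a,s') = \Qsp{\pi}{P'}(s,a,s') - \Q{\pi}{P'}(s,a)$ and then recognize that the subtracted term reassembles $\Q{\pi}{P'}(s,a)$ via the distribution $P'(\cdot|s,a)$.

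Concretely, I would first fix $s\in\mc S$ and $a\in\mc A$ and substitute the definition of $\A{\pi}{P'}$ into the right-hand side, giving
\[
\sum_{s'\in\mc S} P(s'|s,a) \A{\pi}{P'}(s,a,s')
= \sum_{s'\in\mc S} P(s'|s,a) \Qsp{\pi}{P'}(s,a,s') - \Q{\pi}{P'}(s,a)\sum_{s'\in\mc S} P(s'|s,a),
\]
where I pull out the factor $\Q{\pi}{P'}(s,a)$, which does not depend on $s'$. Since $P(\cdot|s,a)\in\Delta(\mc S)$, the trailing sum equals $1$, so the right-hand side collapses to $\sum_{s'} P(s'|s,a)\Qsp{\pi}{P'}(s,a,s') - \Q{\pi}{P'}(s,a)$.

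Next I would invoke Lemma~\ref{lem:recursion:PE}\ref{item:QinG}, which gives the identity $\Q{\pi}{P'}(s,a) = \sum_{s'\in\mc S} P'(s'|s,a)\Qsp{\pi}{P'}(s,a,s')$; the point here is that $\Qsp{\pi}{P'}(s,a,s')$ is conditioned on $S_1 = s'$ and hence does not depend on the first-transition kernel, so the only role of $P'$ versus $P$ is as a weighting over next states. Substituting this expression for $\Q{\pi}{P'}(s,a)$ into the previous display yields exactly $\sum_{s'}(P(s'|s,a) - P'(s'|s,a))\Qsp{\pi}{P'}(s,a,s')$, the left-hand side of the claim, which completes the argument.

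I do not anticipate any genuine obstacle: the statement is a one-line algebraic consequence of the definition of the advantage function together with the decomposition $\Q{\pi}{P'} = \sum_{s'} P'(\cdot|s,a)\Qsp{\pi}{P'}$ from Lemma~\ref{lem:recursion:PE}. The only thing to be careful about is keeping track of which kernel ($P$ or $P'$) appears in the outer summation and confirming that $\Qsp{\pi}{P'}$ is invariant under changes to the first-step transition probabilities, so that no hidden dependence on $P$ sneaks in.
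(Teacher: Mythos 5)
Your proof is correct and uses the same ingredients as the paper's: the definition of $\A{\pi}{P'}$, the normalization $\sum_{s'}P(s'|s,a)=1$, and the identity $\Q{\pi}{P'}(s,a)=\sum_{s'}P'(s'|s,a)\Qsp{\pi}{P'}(s,a,s')$ from Lemma~\ref{lem:recursion:PE}\ref{item:QinG}. The paper simply runs the same algebra in the opposite direction (adding and subtracting $\Q{\pi}{P'}(s,a)$ on the left-hand side), so this is essentially the same argument.
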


    
\begin{lemma}[Frobenius distance between policies]\label{frob:bound:pi}
We have $\|\pi'-\pi\|_{\mathbf F}\le \sqrt{2S}$ for any $\pi,\pi'\in\Pi.$
\end{lemma}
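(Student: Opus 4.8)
The plan is to exploit the product structure $\Pi = \Delta(\mc A)^S$ and reduce the bound to a statement about a single row of the policy matrix. First I would write, directly from the definition of the Frobenius norm,
$\|\pi' - \pi\|_{\mathbf F}^2 = \sum_{s \in \mc S} \|\pi'(\cdot|s) - \pi(\cdot|s)\|_2^2$,
so it suffices to show that $\|\pi'(\cdot|s) - \pi(\cdot|s)\|_2^2 \le 2$ for every fixed state $s \in \mc S$.

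Next, fix $s$ and abbreviate $p = \pi(\cdot|s)$ and $q = \pi'(\cdot|s)$, both elements of the probability simplex $\Delta(\mc A)$. Since $p(a), q(a) \in [0,1]$ we have $|p(a) - q(a)| \le 1$ for every $a \in \mc A$, and therefore
$\|p - q\|_2^2 = \sum_{a \in \mc A} (p(a)-q(a))^2 \le \sum_{a \in \mc A} |p(a) - q(a)| = \|p - q\|_1 \le \|p\|_1 + \|q\|_1 = 2$.
Summing this over all $S$ states yields $\|\pi' - \pi\|_{\mathbf F}^2 \le 2S$, and taking square roots gives the claim.

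I expect no real obstacle; the only care needed is the short chain of elementary norm inequalities in the per-row estimate. A geometrically transparent alternative is to observe that $\Delta(\mc A)$ is the convex hull of the unit coordinate vectors $\{e_a\}_{a \in \mc A}$, so the Euclidean diameter of $\Delta(\mc A)$ is attained at two vertices and equals $\|e_a - e_{a'}\|_2 = \sqrt{2}$; either route delivers the bound $\|p - q\|_2^2 \le 2$ on each row.
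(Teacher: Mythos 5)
Your proposal is correct and follows essentially the same route as the paper: decompose $\|\pi'-\pi\|_{\mathbf F}^2$ over states and bound each row's squared $\ell_2$ distance in $\Delta(\mc A)$ by $2$. The only (immaterial) difference is the per-row estimate — the paper expands $\|\pi'(\cdot|s)-\pi(\cdot|s)\|_2^2$ and uses $\|\pi(\cdot|s)\|_2^2\le 1$, $\|\pi'(\cdot|s)\|_2^2\le 1$, $\langle\pi'(\cdot|s),\pi(\cdot|s)\rangle\ge 0$, whereas you use the comparison $\|p-q\|_2^2\le\|p-q\|_1\le 2$ (or the simplex-diameter observation), both of which are equally valid elementary arguments.
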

Inspired by~\cite[Lemma 3]{thekumparampil2019efficient}, we present a generalization of Danskin's theorem for optimization problems with a smooth but not necessarily convex objective functions.
\begin{theorem}[Danskin's theorem]\label{thm:danskin}
Let $\mathcal X\subseteq\mathbb R^n$ be an open convex set, $\mathcal{Y} \subseteq \mathbb{R}^m$ 
an arbitrary compact set and $f: \mathcal X \times \mc Y \rightarrow \mathbb{R}$ a continuous function such that $f(x, y)$ is $\ell$-smooth in~$x$ for each $y \in \mc Y$ and some $\ell\ge0$. In addition, suppose that $\nabla_x f(x, y)$ is continuous in~$y$ for each $x\in\mathcal X$. Then, the optimal value function $\Phi(x)=\max _{y \in \mc Y} f(x, y)$ is~$\ell$-weakly convex, and its subdifferential is given by
\[
\partial \Phi(x)=\operatorname{conv}\left\{\nabla_x f(x, y^\star) \mid y^\star \in \underset{y \in \mc Y}{\arg \max }\, f(x, y)\right\}.
\]
\end{theorem}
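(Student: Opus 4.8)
The plan is to reduce everything to the classical Danskin theorem for convex functions by means of a quadratic shift, following the strategy of \cite[Lemma~3]{thekumparampil2019efficient}. Since $f(\cdot,y)$ is $\ell$-smooth for every $y\in\mc Y$, the auxiliary function $\tilde f(x,y)=f(x,y)+\tfrac{\ell}{2}\|x\|_2^2$ is convex in $x$ for each fixed $y$: $\ell$-Lipschitz continuity of $\nabla_x f(\cdot,y)$ yields $\langle \nabla_x f(x,y)-\nabla_x f(x',y),\,x-x'\rangle\ge -\ell\|x-x'\|_2^2$, which says exactly that the map $x\mapsto\nabla_x\tilde f(x,y)=\nabla_x f(x,y)+\ell x$ is monotone, hence $\tilde f(\cdot,y)$ is convex. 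Consequently $\tilde\Phi(x):=\Phi(x)+\tfrac{\ell}{2}\|x\|_2^2=\max_{y\in\mc Y}\tilde f(x,y)$ is a pointwise maximum of convex functions and therefore convex, which is precisely the statement that $\Phi$ is $\ell$-weakly convex. This settles the first assertion.

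For the subdifferential formula, recall the convention adopted in this paper that $\partial\Phi(x)=\partial\tilde\Phi(x)-\{\ell x\}$, so it suffices to prove $\partial\tilde\Phi(x)=\operatorname{conv}\{\nabla_x\tilde f(x,y^\star):y^\star\in\arg\max_{y\in\mc Y}f(x,y)\}$. The claimed identity then follows at once, because $\arg\max_{y\in\mc Y}\tilde f(x,y)=\arg\max_{y\in\mc Y}f(x,y)$ (the added quadratic term is independent of $y$) and $\nabla_x\tilde f(x,y)=\nabla_x f(x,y)+\ell x$, so subtracting $\{\ell x\}$ returns $\operatorname{conv}\{\nabla_x f(x,y^\star):y^\star\in\arg\max_y f(x,y)\}$. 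Thus the whole problem reduces to the convex Danskin theorem applied to $\tilde\Phi$: $\tilde f$ is continuous, each $\tilde f(\cdot,y)$ is convex and continuously differentiable (indeed $2\ell$-smooth in $x$), $\nabla_x\tilde f(x,\cdot)$ is continuous by hypothesis, and $\mc Y$ is compact, so the hypotheses of a standard convex Danskin theorem (e.g.\ \cite[Proposition~B.25]{bertsekas2016nonlinear}) are satisfied.

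To keep the paper self-contained I would include a short proof of the convex version. Write $Y^\star(x)=\arg\max_{y\in\mc Y}\tilde f(x,y)$, which is nonempty and compact by compactness of $\mc Y$ and continuity of $\tilde f$, and set $S(x)=\operatorname{conv}\{\nabla_x\tilde f(x,y^\star):y^\star\in Y^\star(x)\}$, which is compact and convex as the convex hull of the continuous image of the compact set $Y^\star(x)$. The inclusion $S(x)\subseteq\partial\tilde\Phi(x)$ is immediate: for $y^\star\in Y^\star(x)$ and any $x'\in\mc X$, convexity of $\tilde f(\cdot,y^\star)$ gives $\tilde\Phi(x')\ge\tilde f(x',y^\star)\ge\tilde f(x,y^\star)+\langle\nabla_x\tilde f(x,y^\star),x'-x\rangle=\tilde\Phi(x)+\langle\nabla_x\tilde f(x,y^\star),x'-x\rangle$, and $\partial\tilde\Phi(x)$ is convex. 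For the reverse inclusion I would compute the one-sided directional derivative in a fixed direction $d$: a lower bound $\tilde\Phi'(x;d)\ge\max_{y^\star\in Y^\star(x)}\langle\nabla_x\tilde f(x,y^\star),d\rangle$ follows by comparing $\tilde\Phi(x+td)\ge\tilde f(x+td,y^\star)$ with $\tilde\Phi(x)=\tilde f(x,y^\star)$ and letting $t\downarrow0$; a matching upper bound follows by writing $\tilde\Phi(x+t_kd)=\tilde f(x+t_kd,y_k)$ for some $y_k\in Y^\star(x+t_kd)$, passing to a convergent subsequence $y_k\to\bar y$ (which lies in $Y^\star(x)$ by continuity of $\tilde\Phi$ and $\tilde f$), applying the mean value theorem to $t\mapsto\tilde f(x+td,y_k)$, and invoking joint continuity of $\nabla_x\tilde f$. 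Hence the support functions of $\partial\tilde\Phi(x)$ and $S(x)$ coincide for every $d$, and since both sets are nonempty, closed and convex, they are equal.

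The main obstacle is the reverse inclusion $\partial\tilde\Phi(x)\subseteq S(x)$, and within it the upper bound on $\tilde\Phi'(x;d)$: it requires a compactness/selection argument to extract a limiting maximizer $\bar y\in Y^\star(x)$ together with careful use of the joint continuity of $\nabla_x\tilde f$, which must itself be deduced from separate continuity in $y$ and the (uniform in $y$) Lipschitz continuity in $x$ via a triangle-inequality estimate. The remaining ingredients---the quadratic-shift identities, the weak-convexity subgradient convention, the easy inclusion $S(x)\subseteq\partial\tilde\Phi(x)$, and the support-function equality of closed convex sets---are routine.
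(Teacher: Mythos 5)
Your proposal is correct and follows essentially the same route as the paper: the quadratic shift $\tilde f(x,y)=f(x,y)+\tfrac{\ell}{2}\|x\|_2^2$, convexity of $\tilde f(\cdot,y)$ via monotonicity of the shifted gradient, the classical convex Danskin theorem of~\cite[Proposition~B.25]{bertsekas2016nonlinear} applied to $\tilde\Phi$, and the translation back through the weak-convexity subdifferential convention $\partial\Phi(x)=\partial\tilde\Phi(x)-\{\ell x\}$ together with $\arg\max_y\tilde f(x,y)=\arg\max_y f(x,y)$. The only difference is that you additionally sketch a self-contained proof of the convex Danskin theorem (including the joint-continuity deduction for $\nabla_x\tilde f$), whereas the paper simply cites it; this is harmless extra detail, not a change of approach.
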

\bibliographystyle{siamplain}
\bibliography{M163125_references.bib}

\end{document}